\providecommand{\U}[1]{\protect\rule{.1in}{.1in}}
\numberwithin{equation}{section}
\newtheorem{theorem}{Theorem}[section]
\newtheorem{lemma}[theorem]{Lemma}
\newtheorem{proposition}[theorem]{Proposition}
\newtheorem{remark}[theorem]{Remark}
\def\<{\langle}
\def\>{\rangle}
\def\d{{\rm d}}
\def\L{\mathcal{L}}
\def\div{{\rm div}}
\def\E{\mathbb{E}}
\def\N{\mathbb{N}}
\def\P{\mathbb{P}}
\def\R{\mathbb{R}}
\def\T{\mathbb{T}}
\def\Z{\mathbb{Z}}
\def\eps{\varepsilon}
\begin{document}

\title{Enhanced dissipation for stochastic Navier-Stokes equations\\ with transport noise}
\author{Dejun Luo \smallskip\\
{\small Key Laboratory of RCSDS, Academy of Mathematics and Systems Science,}\\
{\small Chinese Academy of Sciences, Beijing 100190, China} \\
{\small School of Mathematical Sciences, University of Chinese Academy of Sciences,} \\
{\small Beijing 100049, China} \\
{\small luodj@amss.ac.cn} }

\maketitle

\vspace{-20pt}

\begin{abstract}
The phenomenon of dissipation enhancement by transport noise is shown for stochastic 2D Navier-Stokes equations in velocity form. In the 3D case, suppression of blow-up is proved for stochastic Navier-Stokes equations in vorticity form; in particular, quantitative estimate allows us to choose the parameters of noise, uniformly in initial vorticity bounded in $L^2$-norm, so that global solutions exist with a large probability sufficiently close to 1.
\end{abstract}

\textbf{Keywords:} Navier-Stokes equations, enhanced dissipation, transport noise, scaling limit, quantitative estimate

\textbf{MSC (2020):} primary 60H15; secondary 60H50, 35Q35

\section{Introduction}

In  fluid dynamics and engineering, the advection-diffusion equation
  $$\partial_t\phi + u\cdot \nabla \phi = \nu\Delta \phi$$
is widely used to model the time evolution of passive scalar $\phi$ (e.g. the temperature or the distribution density of some solute) in a fluid flow $u$ with small molecular diffusion $\nu>0$; see e.g. \cite{Constantin, Zlatos10, LTD11, Seis13, YaoZlatos, ACM19, FengIyer, CZDE, WZZ20, Wei21, GalGub} and the references therein. The presence of a suitable flow $u$ sometimes greatly speeds up the convergence to equilibrium, similar to the behavior under a much stronger diffusion operator; in the case $\nu=0$, one is often interested in the mixing efficiency of the fluid flow. Incompressible flows with dissipation-enhancing property can be used to stabilize various systems and suppress possible blow-up of certain quantities \cite{BedHe, FFIT, Iyer, CZDFM}. There are also studies on the stabilizing effects of the Couette flow (or more general shear flows) for some fluid equations, such as the 2D Euler equation \cite{IJ20} and the 2D Boussinesq system \cite{DWZ21}, see \cite{BGM19} for a survey. In the stochastic setting, L. Arnold et al. \cite{ArnoldCW, Arnold} proved in the 1980s that suitable noises stabilize some finite dimensional linear ODE with a coefficient matrix of negative trace. In a series of recent papers \cite{BBPS21, BedroBlum21, BBPS18}, Bedrossian et al. have shown the almost sure exponential mixing and dissipation enhancement by random flows which are solutions to stochastic Navier-Stokes equations, leading to a proof of Batchelor's conjecture on the spectrum of passive scalar turbulence \cite{BBPS19}. Partly motivated by some ideas in \cite{DKK04, BedroBlum21}, stabilization and enhanced dissipation by Kraichnan noise were proved in \cite{GessYar}.

According to arguments by separation of scales \cite{FlaPap, FlaPap21}, stochastic 2D fluid equations driven by multiplicative transport noise in Stratonovich form are suitable models in fluid dynamics, see also \cite{Holm, CFH19} for variational considerations. Here, transport noise is assumed to be spatially divergence free, playing the role of incompressible flows mentioned above and modelling the turbulent actions of small-scale fluid components on larger ones. Indeed, such equations have already been studied by many authors, see for instances \cite{BCF, MikRoz04, MikRoz05, BFM, HLN19}.  Recently, Galeati \cite{Galeati} proved that stochastic linear transport equations on the torus $\T^d$ converge, under a certain scaling limit of the transport noise to high Fourier modes, to a deterministic parabolic equation. Later on, we have applied in \cite{FGL} this idea to the vorticity form of stochastic 2D Euler equations with transport noise, showing that they converge weakly to the deterministic 2D Navier-Stokes equations in vorticity form; similar results were proved in \cite{LuoSaal, Luo21} for other equations. Rewriting the equations in mild form via the heat semigroup, we have improved in \cite{FGL21b} the above weak limit results by establishing some explicit convergence rates; moreover, this approach enables us to obtain some properties on mixing and dissipation enhancement for linear equations driven by transport noise, see \cite[Section 1.3]{FGL21b} and also \cite{FGL21c, FlaLuongo22} for stochastic heat equations with transport noise in a bounded domain or a channel. As an example of noise on the torus with such properties, we can take the Kraichnan-type model in turbulence, cf. \cite[Remark 1.8]{FGL21b}. In these limit results, a remarkable feature is that larger intensity of transport noise gives rise to bigger viscosity coefficient in the limit equation. We have made use of this fact to show that transport noise suppresses potential blow-up of solutions to 3D Navier-Stokes equations in vorticity form \cite{FlaLuo21, FHLN21}; we refer to \cite{FGL21a, Luo23} for similar results on other equations. The large deviation principle and central limit theorem underlying the scaling limits have also been studied in \cite{GalLuo} for stochastic linear transport equations and 2D Euler equations; see \cite{LuoTang23, LuoWang23} for related results on other models. A survey of some of the results can be found in \cite{Flan21}. Finally, we mention that there were lots of studies on the ergodicity and exponential mixing of stochastic 2D Navier-Stokes equations with additive noise which may be very degenerate, see e.g. \cite{FlaMas95, EMS01, Kuk-Shiri01, BKL02, HM06}.

In this work, we first consider the velocity form of stochastic 2D Navier-Stokes equations driven by transport noise:
  \begin{equation}  \label{intro-S2DNSE}
  \left\{ \aligned
  \d u + u\cdot \nabla u\,\d t+ \d \nabla p &= \nu \Delta u\,\d t + \circ \d W_t \cdot\nabla u , \\
  \nabla \cdot u=0, \quad u|_{t=0} & =u_0,
  \endaligned \right.
  \end{equation}
where $u=(u^1, u^2)$ denotes the fluid velocity, $p$ is a scalar pressure term, $\nu>0$ is the viscosity coefficient; $\circ \d$ means the Stratonovich stochastic differential, and $W= W(t,x)$ is a space-time noise on $\T^2$, white in time, colored and divergence free in space, see \eqref{noise} below for explicit choice. In this case, it is well known that the equation admits a unique global solution (cf. \cite{MikRoz05}) and our purpose is to study the phenomenon of dissipation enhancement by transport noise. Secondly, we consider the vorticity form of stochastic 3D Navier-Stokes equations, still perturbed by transport noise:
  \begin{equation}\label{intro-S3DNSE}
  \d \xi + (u\cdot \nabla\xi - \xi\cdot \nabla u)\,\d t = \nu \Delta \xi\,\d t + \Pi(\circ\d W_t\circ \nabla \xi),
  \end{equation}
where the fluid velocity $u$ has now three components $(u^1, u^2, u^3)$ and $\xi =\nabla\times u$ is the corresponding vorticity field, while $W$ is a space-time noise on $\T^3$; $\Pi$ is the Helmholtz-Leray projection operator which makes the noise part become divergence free, see Section \ref{subs-3D-NSE-main-result} for more detailed discussions. We will try to establish quantitative estimate on the probability of (possible) blow-up; as a consequence, we can choose noise parameters (e.g. intensity and Fourier modes of noise) so that global solution exists with a high probability sufficiently close to 1, see Theorem \ref{thm-3D-SNSE} in Section \ref{subs-3D-NSE-main-result} for the precise statement.

We take the system \eqref{intro-S2DNSE} as an example to explain the main difficulty of our arguments. Compared to \cite{FGL, FGL21a, Luo21} which deal with scalar setting, we stress that the solutions to stochastic Navier-Stokes equations are divergence free vector fields, and thus we need to project the noise part using the Helmholtz projection $\Pi$, as in \cite{FlaLuo21}. In the 2D case, the transport noise takes the more precise form
  $$\circ \d W_t \cdot\nabla u= \sqrt{2 \kappa}\, \sum_{k\in \Z^2_0} \theta_k \sigma_k\cdot\nabla u\circ \d W^k_t,$$
where $\kappa>0$ is noise intensity, $\Z^2_0 = \Z^2\setminus \{0\}$ consists of nonzero lattice points, $\theta= \{\theta_k\}_k \in \ell^2(\Z^2_0)$ stands for the coefficients of noise, $\{\sigma_k\}_k$ are canonical divergence free complex-valued vector fields on $\T^2$, and $\{W^k\}_k$ are independent complex Brownian motions, see Section \ref{subs-notation} below for the precise definitions. Applying the operator $\Pi$ to the first equation in \eqref{intro-S2DNSE} and transforming it into It\^o form, the noise becomes
  $$\Pi(\circ \d W_t \cdot\nabla u)= \sqrt{2 \kappa}\, \sum_k \theta_k \Pi(\sigma_k\cdot\nabla u)\, \d W^k_t
  + 2\kappa \sum_k \theta_k^2\, \Pi\big[\sigma_k\cdot\nabla \Pi(\sigma_{-k}\cdot\nabla u) \big]\, \d t; $$
the second term on the right-hand side is the Stratonovich-It\^o corrector which will be denoted as $S_\theta^{(2)}(u)$, the superscript representing the dimension 2. Similarly to the scalar case, it is relatively easy to show that the martingale part vanishes in the weak sense by choosing a suitable sequence of noises $W^N(t,x)$ with weaker and weaker spatial correlation. However, the treatment of the above Stratonovich-It\^o corrector is much more difficult. In the scalar case \cite{FGL, FGL21a, Luo21} where $\Pi$ is not needed, some elementary computations show that the corrector reduces to a constant multiple of the Laplacian operator; on the contrary, in the current vector-field case, due to the presence of projection $\Pi$ which is a nonlocal operator, the corrector $S_\theta^{(2)}(u)$ has a much more complicated expression in Fourier expansions, see e.g. Lemma \ref{lem-append-exress} below.

In the 3D case, for a suitable sequence of noise coefficients $\{\theta^N\}_{N\geq 1}$ (see \eqref{theta-N-def} in the next subsection), it was proved in \cite[Theorem 5.1]{FlaLuo21} that $S_{\theta^N}^{(3)}(v)$ converges in $L^2$-norm to $\frac35\kappa \Delta v$, where $v$ is any divergence free smooth vector field. This is the key ingredient for showing suppression of vorticity blow-up by transport noise, see \cite[Theorem 1.6]{FlaLuo21}. However, Theorem 5.1 in \cite{FlaLuo21} is only a qualitative result without convergence rate; in order to prove dissipation enhancement and blow-up probability estimates, we need a quantitative version of the convergence result for $S_{\theta^N}^{(d)}(v)\ (d=2,3)$, which will be done in Theorem \ref{thm-Ito-corrector} below. To this end, we shall substantially improve the computations in \cite[Section 5]{FlaLuo21}, finding suitable Sobolev norms of convergence and certain estimates uniform in $N$; more precisely, we shall prove that there exists $C>0$ such that for any $\alpha\in [0,1]$, $s\in \R$ and $N\ge 1$,
  \begin{equation}\label{quantitative-estimate}
  \big\|S_{\theta^N}^{(d)}(v)- c_d \kappa \Delta v \big\|_{H^{s-2-\alpha}} \le C \kappa N^{-\alpha} \|v\|_{H^{s}}
  \end{equation}
holds for any  $v\in H^s(\T^d,\R^d)$, where $c_2= 1/4$ and $c_3= 3/5$, see Theorem \ref{thm-Ito-corrector} for exact statements. Once we have the quantitative convergence estimate, we can write the It\^o equations of \eqref{intro-S2DNSE} and \eqref{intro-S3DNSE} in mild form by using the semigroup $P_t = e^{(\nu+ c_d\kappa)t \Delta}$; the mild formulation involves the error terms $S_{\theta^N}^{(d)}(\cdot)- c_d \kappa \Delta $, as well as stochastic convolutions. Following some arguments in \cite{FGL21b} (in particular, Theorems 1.9 and 1.5 therein), and making use of basic heat kernel properties and maximal estimates on stochastic convolutions (cf. Theorems \ref{thm-stoch-convol} and \ref{thm-stoch-convol-2} below), we can show the property of dissipation enhancement for \eqref{intro-S2DNSE} and estimate of blow-up probability for \eqref{intro-S3DNSE}.

We briefly discuss potential applications of our method in future works, especially the quantitative estimate \eqref{quantitative-estimate}. As mentioned above, for incompressible fluid dynamics equations driven by Stratonovich transport noise, except the vorticity form in 2D case, we need to deal with solutions which are divergence free vector fields. In this case, it is customary to apply the Helmholtz-Leray operator $\Pi$ to get rid of the pressure term, but then the operator also appears on the noise part. Passing to It\^o equations, we get a Stratonovich-It\^o corrector $S_{\theta^N}^{(d)}(\cdot)$ and the estimate \eqref{quantitative-estimate} is needed to yield quantitative convergence rate. This approach works for many fluid dynamical models, see for instance the recent paper \cite{LuoTang23} dealing with stochastic inviscid Leray-$\alpha$ model with transport noise. It may also be applied to the regularized 3D Boussinesq equations with fractional Laplacian, cf. \cite{BesFer17}. Another interesting problem is to study fluid equations with the so-called nonlinear transport noise, namely, the intensity of noise depends on the solution. Recently, for the vorticity form of stochastic 2D Navier-Stokes equations with nonlinear transport noise, we have shown in \cite{FLL23} that the equations converge weakly, in a suitable scaling limit of the noise, to a Smagorinsky type model in large eddy simulation (cf. \cite{Berselli}). It would be nice to prove a similar result for the velocity equations, which involve a quantity like $\Pi(f(|\nabla u|) \circ \d W_t \cdot\nabla u)$, $f$ being some scalar function; in this case, the Stratonovich-It\^o corrector is even more complicated and is left to future studies.

We finish the short introduction with the organization of the paper. In the remainder of this section we introduce some notation for functional spaces and the space-time noise used in the paper. In Section \ref{sec-main-result} we describe the stochastic 2D/3D Navier-Stokes equations with transport noise and state our main results: Theorems \ref{thm-NSE} and \ref{thm-3D-SNSE}. Section \ref{sec-preparations} contains the preparations needed for proving the main results, then the proofs will be presented in Sections \ref{sec-proof-2D-NSE} and \ref{sec-proof-3D-NSE}, respectively. Finally, we prove in Appendix \ref{sec-appendix} the quantitative limit result of the Stratonovich correctors, which is the key ingredient in the proofs of main results.

\subsection{Functional setting and noise} \label{subs-notation}

We introduce some notations that will be used in the paper. Let $\T^d=\R^d/\Z^d$ be the $d$-dimensional torus, $d=2,3$; $\Z_0^d = \Z^d \setminus \{0\}$ is the set of nonzero integer points. For $s\in \R$ and $m\in \N$, we write $H^s(\T^d,\R^m) = W^{s,2}(\T^d,\R^m)$ for the usual Sobolev spaces of $\R^m$-valued functions; $H^0(\T^d,\R^m) = L^2(\T^d,\R^m)$. If $m=1$ we simply write $H^s(\T^d),\, L^2(\T^d)$, or $H^s, L^2$ when the dimension $d$ is clear. The brackets $\<\cdot, \cdot\>$ stand for the inner product in $L^2$ and the duality between elements in $H^s$ and $H^{-s}$; the norms in these spaces will be written as $\|\cdot \|_{L^2}$ and $\|\cdot \|_{H^s}$, respectively. Let $\{e_k \}_k$ be the usual complex basis of $L^2(\T^d, \mathbb C)$: for any $k\in \Z^d$, $e_k(x)= e^{2\pi {\rm i} k\cdot x}$, ${\rm i}$ being the imaginary unit. As the fluid equations considered below preserve the means of solutions, we assume in this paper that the function spaces consist of functions on $\T^d$ with zero average. The notation $a\lesssim b$ means that there exists a constant $C>0$ such that $a\leq Cb$; if we want to emphasize the dependence of $C$ on some parameters, e.g. $\alpha,p$, then we write $a\lesssim_{\alpha,p} b$.

We shall need the Helmholtz-Leray projection $\Pi : L^2(\T^d, \R^d) \to H$ where $H$ is the closed subspace of $L^2(\T^d, \R^d)$ consisting of divergence free vector fields. For a general vector field $X\in L^2(\T^d, \R^d)$, we have the formal expression
  $$\Pi X= X- \nabla \Delta^{-1} (\nabla \cdot X); $$
if $X= \sum_{k\in \Z^d_0} X_k e_k$ is the Fourier series, where $\{X_k \}_k \subset \mathbb C^d$, then
  $$\Pi X= \sum_{k\in \Z^d_0} \bigg(I_d- \frac{k\otimes k}{|k|^2}\bigg) X_k e_k, $$
where $I_d$ is the identity matrix of order $d$. The projection $\Pi$ will be the main source of technical difficulty in the paper as it is a nonlocal operator. For any $s\in \R$, we can extend $\Pi$ to $H^s(\T^d, \R^d)$ and it holds $\|\Pi X\|_{H^s} \leq \| X\|_{H^s}$ for any $X\in H^s(\T^d, \R^d)$.

Next, we introduce the space-time noise used in the paper to perturb the equations:
  \begin{equation}\label{noise}
  W(t,x)= \sqrt{C_{d} \kappa}\, \sum_{k\in \Z^d_0}\sum_{i=1}^{d-1} \theta_{k} \sigma_{k,i}(x) W^{k,i}_{t},
  \end{equation}
where $C_{d}=d/(d-1)$ is a normalizing constant, $\kappa>0$ is the noise intensity and $\theta\in\ell^{2} =\ell^{2}(\Z^d_0)$, the space of square summable sequences indexed by $\Z_0^d$. $\{W^{k,i}:k\in\mathbb{Z}^{d}_{0}, i=1,\ldots,d-1\}$ are standard complex Brownian motions defined on a filtered probability space $(\Omega, \mathcal F, (\mathcal F_t), \P)$, satisfying
  \begin{equation}\label{noise.1}
  \overline{W^{k,i}} = W^{-k,i}, \quad\big[W^{k,i},W^{l,j} \big]_{t}= 2t \delta_{k,-l} \delta_{i,j}.
  \end{equation}
$\{\sigma_{k,i}: k\in\mathbb{Z}^{d}_{0}, i=1,\ldots,d-1\}$ are divergence free vector fields on $\T^d$ defined as
  $$\sigma_{k,i}(x) = a_{k,i} e_{k}(x),$$
where $\{a_{k,i}\}_{k,i}$ is a subset of the unit sphere $\mathbb{S}^{d-1}$ such that: (i) $a_{k,i}=a_{-k,i}$ for all $k\in \mathbb{Z}^{d}_{0},\, i=1,\ldots,d-1$; (ii) for fixed $k$, $\{a_{k,i}\}_{i=1}^{d-1}$ is an ONB of $k^{\perp}=\{y\in\mathbb{R}^{d}:y\cdot k=0 \}$. In this way, $\{\sigma_{k,i}\}_{k,i}$ is a CONS of $H$. It holds that $\sigma_{k,i}\cdot \nabla e_k = \sigma_{k,i}\cdot \nabla e_{-k} \equiv 0$ for all $k\in \Z^d_0$ and $1\leq i\leq d-1$. If $d=2$, then we can explicitly define
  \begin{equation}\label{noise-2D}
  a_{k,1}= a_k = \begin{cases}
  \frac{k^\perp}{|k|}, & k\in \Z^2_+, \\
  -\frac{k^\perp}{|k|}, & k\in \Z^2_-,
  \end{cases}
  \end{equation}
where $k^\perp= (k_2,-k_1)$ is now a vector and $\Z^2_0 = \Z^2_+ \cup \Z^2_-$ is a partition of $\Z^2_0$ satisfying $\Z^2_+ = -\Z^2_-$. In the 3D case we shall write $\sum_{k\in \Z^3_0} \sum_{i=1}^{2}$ simply as $\sum_{k,i}$.

We shall always assume that
\begin{itemize}
\item $\theta \in \ell^2$ is symmetric, i.e. $\theta_k = \theta_l$ for all $k,l\in \Z^d_0$ satisfying $|k|=|l|$;
\item $\|\theta \|_{\ell^2} = \big(\sum_k \theta_k^2 \big)^{1/2} =1$.
\end{itemize}
The noise $W$ has the spatial covariance function (use \eqref{noise.1})
  \begin{equation}\label{covariance-funct}
  Q(x,y)= \E[W(1,x) \otimes W(1,y)]= 2C_d \kappa \sum_{k,i} \theta_k^2 (a_{k,i} \otimes a_{k,i}) e^{2\pi {\rm i} k\cdot (x-y)}, \quad x,y\in \T^d ;
  \end{equation}
Note that for any $k\in \Z^d_0$, $\{\frac{k}{|k|}, a_{k,1}, \ldots, a_{k,d-1}\}$ is an ONB of $\R^d$, hence
  $$\frac{k\otimes k}{|k|^2}+ \sum_{i=1}^{d-1} (a_{k,i} \otimes a_{k,i})  = I_d; $$
it is not difficult to show that (see \cite[Section 2]{FlaLuo21} for a proof in the 3D setting)
  \begin{equation}\label{covariance-funct-1}
  Q(x,x)= 2C_d \kappa \sum_{k,i} \theta_k^2 (a_{k,i} \otimes a_{k,i}) = 2\kappa I_d.
  \end{equation}
Moreover, choosing suitable $\theta \in \ell^2$ we obtain Kraichnan type noise, cf. \cite[Remark 1.8]{FGL21b}.

In the following, in order to deal with the Stratonovich-It\^o correctors (see \eqref{Ito-correction} and \eqref{Stra-Ito-corrector}) involving the Helmholtz projection $\Pi$, we shall take a special sequence of $\{\theta^N \}_{N} \subset \ell^2(\Z^d_0)$ defined as below:
  \begin{equation}\label{theta-N-def}
  \theta^N_k =\frac1{\Lambda_N} \frac1{|k|^\gamma} \textbf{1}_{\{N\leq |k|\leq 2N\}}, \quad k\in \Z^d_0,
  \end{equation}
where $\Lambda_N= \big(\sum_{N\leq |k|\leq 2N} \frac1{|k|^{2\gamma}} \big)^{1/2}$, $N\geq 1$. Approximating the sum by integrals, it is easy to show that
   $$\Lambda_N \sim \begin{cases}
   \sqrt{\frac{C_d}{d-2\gamma} (2^{d-2\gamma}-1)}\, N^{d/2-\gamma},& \quad 0<\gamma<d/2, \\
   \sqrt{C_d \log 2}, & \quad \gamma=d/2, \\
   \sqrt{\frac{C_d}{2\gamma -d} (1- 2^{d- 2\gamma})}\, N^{d/2-\gamma}, & \quad \gamma>d/2,
   \end{cases} $$
where $C_d$ is a dimensional constant; therefore, $\|\theta^N \|_{\ell^\infty} = \sup_k \theta^N_k \sim N^{-d/2}$. We see that the value of $\gamma$ plays no role in the rate of decay as $N\to\infty$, and thus in the sequel we will fix a $\gamma>0$ and omit the dependence of various constants on it.

\section{Main results} \label{sec-main-result}

This section contains the descriptions of our models and the main results obtained in the paper: in Section \ref{subsec-2DNSE} we deal with the stochastic 2D Navier-Stokes equations in velocity form and perturbed by multiplicative transport noise, then we consider in Section \ref{subs-3D-NSE-main-result} the stochastic 3D Navier-Stokes equations in vorticity form, still with transport noise.

\subsection{Stochastic 2D Navier-Stokes equations}\label{subsec-2DNSE}

In this section we consider the stochastic Navier-Stokes equations \eqref{intro-S2DNSE} on $\T^2$, perturbed by transport noise. Since we are in dimension 2, the noise has the simpler expression
  $$W(t,x)= \sqrt{2 \kappa}\, \sum_{k\in \Z^2_0} \theta_k \sigma_k(x) W^k_t, $$
where $\sigma_k(x)= a_k e_k(x)$ with $a_k$ defined in \eqref{noise-2D}. Substituting it into \eqref{intro-S2DNSE} we obtain
  \begin{equation}\label{NSE}
  \left\{ \aligned
  \d u +u\cdot \nabla u\,\d t+ \d \nabla p &= \nu \Delta u\,\d t + \sqrt{2\kappa} \sum_k \theta_k\, \sigma_k \cdot\nabla u \circ \d W^k_t, \\
  \nabla \cdot u=0, \quad u|_{t=0} & = u_0,
  \endaligned \right.
  \end{equation}
where the turbulent pressure takes the form $\d \nabla p= \nabla\tilde p\,\d t+  \sum_k \nabla p_k \circ \d W^k$. The equations \eqref{NSE} have been studied by many authors in the past, see e.g. \cite{BCF, MikRoz04, MikRoz05}. In particular, it was proved in \cite{MikRoz05} (replacing $\T^2$ by $\R^2$) that \eqref{NSE} admit a pathwise unique global solution, provided that the noise is regular enough.

\begin{remark}
If we define the vorticity $\xi = \nabla^\perp \cdot u$ and transform the first equation in \eqref{NSE} into vorticity form, then what we get is
  $$\d\xi + u\cdot\nabla \xi \,\d t = \nu \Delta \xi\,\d t + \sqrt{2\kappa} \sum_k \theta_k\, \big(\sigma_k \cdot\nabla \xi + r(\sigma_k,u) \big) \circ \d W^k_t, $$
with the remainder
  $$\aligned
  r(\sigma_k,u) &= \partial_2 \sigma_k^1 \partial_1 u^1 + \partial_2 \sigma_k^2 \partial_2 u^1 - \partial_1 \sigma_k^1 \partial_1 u^2 - \partial_1 \sigma_k^2 \partial_2 u^2 \\
  &= (\partial_2 \sigma_k^1 + \partial_1 \sigma_k^2 ) \partial_1 u^1 + (\partial_2 u^1+ \partial_1 u^2)  \partial_2 \sigma_k^2,
  \endaligned $$
where in the second step we have used the facts $\nabla\cdot u = \nabla\cdot \sigma_k =0$. The above equation is different from the vorticity form of stochastic 2D Navier-Stokes equations studied in \cite{BFM, FGL, BM19, CL19}.
\end{remark}

Recall the Helmholtz projection $\Pi: L^2(\T^2,\R^2) \to H$. As usual, define $b(u,v) = \Pi(u\cdot \nabla v),\, u,v\in H$; let $b(u)= b(u,u)$. For simplicity we still write $\Pi\Delta $ as $\Delta$. Applying the projection $\Pi$ to \eqref{NSE} yields
  \begin{equation}\label{NSE-Stra}
  \d u + b(u) \,\d t = \nu \Delta u\,\d t + \sqrt{2\kappa} \sum_k \theta_k\, \Pi(\sigma_k \cdot\nabla u) \circ \d W^k_t.
  \end{equation}
It has the It\^o form
  \begin{equation}\label{NSE-Ito}
  \d u + b(u) \,\d t = \big(\nu \Delta u + S_\theta^{(2)}(u) \big)\,\d t + \sqrt{2\kappa} \sum_k \theta_k\, \Pi(\sigma_k \cdot\nabla u) \, \d W^k_t,
  \end{equation}
where, by \eqref{noise.1}, the Stratonovich-It\^o correction term is defined as (the superscript $(2)$ stands for the dimension 2)
  \begin{equation}\label{Ito-correction}
  S_\theta^{(2)}(u)= 2\kappa \sum_k \theta_k^2\, \Pi\big[ \sigma_k \cdot\nabla \Pi(\sigma_{-k} \cdot\nabla u) \big],
  \end{equation}
which, like the Laplacian, is a symmetric operator with respect to the $L^2$-inner product of divergence free vector fields.

\begin{remark}
If there were no projection $\Pi$ on the right-hand side, then we would have $S_\theta^{(2)} (u)= \kappa \Delta u$ which can be easily proved by using \eqref{covariance-funct-1} with $d=2$, and thus the viscosity coefficient of \eqref{NSE-Ito} would be $\nu+\kappa$ if the martingale part vanishes in a certain limit. The presence of $\Pi$ makes it much more complicated to show enhanced dissipation. We shall follow some of the arguments in \cite[Section 5]{FlaLuo21} to overcome this difficulty.
\end{remark}

Let $V= H \cap H^1(\T^2,\R^2)$. For any $u_0 \in H$, the unique solution $u$ to \eqref{NSE-Ito} (or \eqref{NSE-Stra}) has trajectories in $C(0,\infty; H) \cap L^2(0,\infty; V)$ and satisfies
  \begin{equation}\label{NSE-energy-balance}
  \P \mbox{-a.s.}, \quad \|u_t \|_{L^2}^2 + 2\nu \int_s^t \|\nabla u_r\|_{L^2}^2\,\d r = \|u_s \|_{L^2}^2, \quad t>s \geq 0
  \end{equation}
(cf. \cite[Theorem 2.2]{MikRoz05}). Note that it holds independently of the noise parameters $\kappa>0$ and $\theta\in \ell^2$. Using the Poincar\'e inequality $4\pi^2 \|u\|_{L^2}^2 \leq \|\nabla u\|_{L^2}^2$, it is easy to know that $\|u_t \|_{L^2}$ decays exponentially fast:
  \begin{equation}\label{expon-decay}
  \P \mbox{-a.s.}, \quad \|u_t \|_{L^2} \leq e^{-4\pi^2 \nu t} \|u_0 \|_{L^2} \quad \forall\, t\geq 0.
  \end{equation}
However, the viscosity coefficient $\nu$ is usually very small, thus the above result only gives us a slow decay rate; what we want to prove is that transport noise enhances dissipation and leads to much faster decay. Unlike the evolution equations for scalars studied in \cite{FGL21b}, the main difficulty here comes from the projection operator $\Pi$ appearing in the noise part of \eqref{NSE-Stra}. We shall prove

\begin{theorem}[Enhanced dissipation] \label{thm-NSE}
For any $p\geq 1$, $\lambda>0$ and $L>0$, there exists $ \kappa>0$ and $\theta\in \ell^2$ with the following property: for every $u_0\in H$  with $\|u_0 \|_{L^2} \leq L$, there exists a random constant $C>0$ with finite $p$-th moment, such that for the solution $u$ of equation \eqref{NSE-Stra} with initial condition $u_0$, we have $\mathbb{P}$-a.s.
\[
\| u_t \|_{L^2} \leq C e^{-\lambda t} \|u_0 \|_{L^2}\quad \mbox{for all } t\geq0.
\]
\end{theorem}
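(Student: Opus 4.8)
The plan is to derive enhanced dissipation as a consequence of a scaling-limit argument: with the special sequence $\{\theta^N\}_{N\geq 1}$ from \eqref{theta-N-def}, the It\^o correction $S_{\theta^N}^{(2)}(u)$ converges quantitatively to $\kappa\,\tfrac{?}{?}\Delta u$ (the precise constant being supplied by Theorem \ref{thm-Ito-corrector}), so that for large $N$ the solution of \eqref{NSE-Ito} behaves like the solution of a deterministic 2D Navier--Stokes equation with an enhanced viscosity $\nu + c\kappa$. Since the deterministic equation enjoys exponential decay at rate $\sim 4\pi^2(\nu+c\kappa)$ by the Poincar\'e inequality as in \eqref{expon-decay}, choosing $\kappa$ large enough that $4\pi^2(\nu+c\kappa)>\lambda$ (with a comfortable margin) and then $N$ large enough that the martingale part and the error in the corrector are small, we obtain the claimed bound with an $L^p$ random constant. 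The pair $(\kappa,\theta)$ in the statement is then $(\kappa, \theta^N)$ for suitably chosen $\kappa$ and $N=N(p,\lambda,L)$.

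Concretely, I would proceed as follows. First, fix $\kappa$ so that $\Lambda := 4\pi^2(\nu + c\kappa)$ satisfies $\Lambda > \lambda + 1$, say. Second, write the $L^2$-energy identity \eqref{NSE-energy-balance}; since this identity holds independently of the noise, the decay \eqref{expon-decay} gives an a priori bound but only at the slow rate $4\nu\pi^2$ --- this is not yet enough, and the noise must be exploited through the \emph{structure} of the equation, not just the energy balance. So instead I would introduce the difference $w = u - \bar u$ between the solution $u$ of the stochastic equation \eqref{NSE-Ito} (with noise coefficients $\theta^N$) and the solution $\bar u$ of the limiting deterministic NSE with viscosity $\nu+c\kappa$, both started from $u_0$. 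Using the mild formulation via the enhanced heat semigroup $e^{t(\nu+c\kappa)\Delta}$, together with Theorem \ref{thm-Ito-corrector} to control $\|S_{\theta^N}^{(2)}(\cdot) - c\kappa\Delta(\cdot)\|$ in a suitable negative Sobolev norm, and a Burkholder--Davis--Gundy estimate on the martingale term $\sqrt{2\kappa}\sum_k\theta^N_k\,\Pi(\sigma_k\cdot\nabla u)\,\d W^k_t$ (whose quadratic variation is controlled by $\kappa\|\theta^N\|_{\ell^\infty}^2\lesssim \kappa N^{-2}$ times Sobolev norms of $u$, by the same mechanism as in the scalar case), I would show that for each $T>0$ and each $p\geq 1$,
\[
\E\Big[\sup_{t\in[0,T]} \|w_t\|_{L^2}^p\Big] \leq \eta_N(T,p)\,\|u_0\|_{L^2}^p,
\]
with $\eta_N(T,p)\to 0$ as $N\to\infty$. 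The deterministic bound $\|\bar u_t\|_{L^2}\leq e^{-\Lambda t}\|u_0\|_{L^2}$ then yields $\|u_t\|_{L^2}\leq e^{-\Lambda t}\|u_0\|_{L^2} + \sup_{s\le t}\|w_s\|_{L^2}$.

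Third --- and this is the step where care is needed to get an \emph{almost sure, all-time} bound with an $L^p$ constant rather than merely a decay-in-expectation statement --- I would combine the short-time closeness estimate with the unconditional decay. Fix a time $T_0$ with $e^{-\Lambda T_0}\le \tfrac14 e^{-\lambda T_0}$, and iterate: on each interval $[mT_0,(m+1)T_0]$, apply the closeness estimate started from $u_{mT_0}$ together with $\|u_{(m+1)T_0}\|_{L^2}\le e^{-4\nu\pi^2 T_0}\|u_{mT_0}\|_{L^2}$ from \eqref{expon-decay}, which already contracts (slowly) and in particular keeps all the Sobolev norms entering $\eta_N$ under control. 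Choosing $N$ large enough that the accumulated errors over all intervals sum to something summable, one extracts a single random constant $C = \sup_{t\ge0} e^{\lambda t}\|u_t\|_{L^2}/\|u_0\|_{L^2}$ and bounds its $p$-th moment by a convergent series in $m$; uniformity in $u_0$ with $\|u_0\|_{L^2}\le L$ comes from homogeneity of all the estimates in $\|u_0\|_{L^2}$ (the $L^2$ energy estimate is linear, so $L$ enters only as an overall scale). I expect the main obstacle to be exactly this last patching argument: one must propagate enough regularity of $u$ (e.g. an $L^2_t H^1_x$ bound with moments, available from \eqref{NSE-energy-balance}) through infinitely many time windows so that the quantitative corrector estimate of Theorem \ref{thm-Ito-corrector} and the BDG bound remain uniformly applicable, and one must handle the nonlinearity $b(u)$ in the difference equation --- here the 2D structure ($\|b(u)-b(\bar u)\|_{H^{-1}}\lesssim (\|u\|_{L^2}+\|\bar u\|_{L^2})\|w\|_{L^2}$ after interpolation, or a Ladyzhenskaya-type estimate) and a Gr\"onwall argument absorbing the resulting term into the enhanced dissipation are what make the scheme close.
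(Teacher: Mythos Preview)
Your strategy---compare $u$ with the solution $\bar u$ of the deterministic Navier--Stokes equation with enhanced viscosity $\nu+c\kappa$, then iterate a closeness estimate over windows $[mT_0,(m+1)T_0]$---is \emph{not} the route the paper takes, and as written it has a real gap. The paper does not introduce any deterministic comparison at all: instead it exploits the pathwise monotonicity $\|u_{n+1}\|_{L^2}^2\le\int_n^{n+1}\|u_t\|_{L^2}^2\,\d t$ from \eqref{NSE-energy-balance}, inserts the mild formulation \eqref{NSE-mild} for $u_t$ on $[n,n+1]$, and estimates the four resulting pieces (semigroup, nonlinearity, corrector error via Theorem~\ref{thm-Ito-corrector}, stochastic convolution via Theorem~\ref{thm-stoch-convol}) to obtain $\E\|u_{n+1}\|_{L^2}^2\le\delta\,\E\|u_n\|_{L^2}^2$ with $\delta$ small. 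This single contraction in expectation, combined with pathwise monotonicity and a Borel--Cantelli argument (Chebyshev on the events $\{\sup_{[n,n+1]}\|u_t\|_{L^2}>e^{-\lambda n}\|u_0\|_{L^2}\}$), immediately yields the a.s.\ bound with an $L^p$ random constant; no telescoping of errors across infinitely many windows is needed.

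The specific problems with your scheme are: (i) the claim that ``$L$ enters only as an overall scale'' by homogeneity is false---the nonlinearity $b(u)$ is quadratic, so the difference equation for $w=u-\bar u$ picks up terms of size $\|u_0\|_{L^2}^2$, and indeed in the paper's $\delta$ the nonlinearity contributes a factor $\|u_0\|_{L^2}^2/(\kappa\nu)$ which is why the result is stated only for $\|u_0\|_{L^2}\le L$; (ii) your martingale bound ``quadratic variation $\lesssim\kappa\|\theta^N\|_{\ell^\infty}^2$'' is too crude---the $L^2$ control of the stochastic convolution actually requires an interpolation between an $H^\alpha$ estimate (using $\int\|\nabla u\|_{L^2}^2$ from \eqref{NSE-energy-balance}) and an $H^{-\beta}$ estimate (using $\|\theta^N\|_{\ell^\infty}$), see Theorem~\ref{thm-stoch-convol}; (iii) restarting the deterministic comparison from the random point $u_{mT_0}$ on each window and summing the errors $\eta_N$ over $m$ does not obviously converge unless you already know $\|u_{mT_0}\|_{L^2}$ decays geometrically, which is circular. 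The paper sidesteps all of this by never introducing $\bar u$ and by proving a one-step contraction in $\E\|\cdot\|_{L^2}^2$ directly.
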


See Section \ref{sec-proof-2D-NSE} for the proof and for the choices of parameters; in particular, we shall take $\theta=\theta^N$ defined in \eqref{theta-N-def} for $N$ big enough. Compared to usual results on dissipation enhancement for passive scalars, the above theorem is stated for bounded initial data. The reason is due to the nonlinear part $b(u)= \Pi(u\cdot \nabla u)$, which is quadratic and leads to higher order terms in the estimates. It is interesting to note that the nonlinearity does not cause any trouble in deriving the energy balance \eqref{NSE-energy-balance}, the latter implying exponential decay of $\|u_t \|_{L^2}$ with rate $4\pi^2 \nu$ for any initial data $u_0\in H$. Therefore, for initial data with large $L^2$-norm, we can wait some time $t_0\, (\approx \frac1\nu \log \frac{\|u_0 \|_{L^2}}{L})$ until  $\|u_{t_0} \|_{L^2} \leq L$ and then apply Theorem \ref{thm-NSE} to get fast decay. In other words, for fixed noise parameters $(\kappa, \theta)$, it takes some finite time before transport noise plays the role of producing enhanced dissipation.

\subsection{Stochastic 3D Navier-Stokes equations in vorticity form}\label{subs-3D-NSE-main-result}

Since the seminal work of Leray \cite{Leray} (see also the reviews \cite{Feff, BianFlan}), it is well known that the 3D incompressible Navier-Stokes equations on $\T^3$ (we consider unitary viscosity for simplicity)
  \begin{equation}\label{3D-NSE}
  \left\{ \aligned
  \partial_t u + u\cdot \nabla u + \nabla p&= \Delta u, \\
  \nabla \cdot u=0, \quad u|_{t=0} & = u_0
  \endaligned \right.
  \end{equation}
have global weak solutions for $L^2$-initial data, but their uniqueness is unknown; on the contrary, for more regular initial data in $H^1$, strong solutions exist uniquely, but their global existence in time remains open. An easy way of understanding the second difficulty is to rewrite the above system in vorticity form:
  \begin{equation}\label{3D-NSE-vort}
  \partial_t \xi + u\cdot \nabla\xi - \xi\cdot \nabla u = \Delta \xi,
  \end{equation}
where $\xi= \nabla\times u$ is the vorticity field. The relation can be reversed as $u=K\ast \xi$ with $K$ the Biot-Savart kernel on $\T^3$; note that $u\in H^1$ corresponds to $\xi\in L^2$. In the nonlinear part, not only is the vorticity transported by the fluid velocity, but it is also stretched by the deformation operator $\nabla u$; the latter is the potential source of blow-up for the vorticity. Indeed, we have the well known energy estimate:
  \begin{equation}\label{3D-NSE-energy-identity}
  \frac{\d}{\d t} \|\xi\|_{L^2}^2 \leq -\|\nabla \xi\|_{L^2}^2+ C\|\xi\|_{L^2}^6,
  \end{equation}
where $C>0$ is a constant arising from Sobolev embedding. It is not difficult to show that, combined with the Poincar\'e inequality $\|\nabla \xi\|_{L^2}^2 \geq 4\pi^2 \| \xi\|_{L^2}^2$, there is a small $r_0>0$ such that global solutions to \eqref{3D-NSE-vort} exist for initial data $\xi_0$ with $\|\xi_0\|_{L^2}\leq r_0$, while for general initial data we only have a (finite) estimate on the possible blow-up time.

In this part, we consider the stochastic 3D Navier-Stokes equations in vorticity form, with a multiplicative noise of transport type:
  \begin{equation}\label{3D-NSE-vort-noise}
  \partial_t \xi + u\cdot \nabla\xi - \xi\cdot \nabla u = \Delta \xi + \Pi(\dot W_t\circ \nabla \xi).
  \end{equation}
Here, $\circ$ hints that we shall use the Stratonovich stochastic differential. We point out that the Helmholtz projection $\Pi$ is necessary since the other terms are all divergence free (each one of $u\cdot \nabla\xi$ and $\xi\cdot \nabla u$ is not necessarily divergence free, but the difference turns out to be so), while $\dot W_t\circ \nabla \xi$ is in general not divergence free, though both the noise $\dot W_t$ and the vorticity $\xi$ have this property.  As in the last subsection, the operator $\Pi$ makes it more difficult to show suppression of blow-up by transport noise. Let us remark that a more natural noise for \eqref{3D-NSE-vort} is of transport-stretching type (cf. \cite{CFH19}), i.e. $\dot W_t\circ \nabla \xi- \xi\circ \nabla \dot W_t$, in which case the projection $\Pi$ is not needed. There are however some difficulties that we are unable to overcome for the time being, and hence we restrict ourselves to the stochastic 3D Navier-Stokes equations \eqref{3D-NSE-vort-noise} with projected transport noise.

Recall the noise defined in \eqref{noise}; inserting it into \eqref{3D-NSE-vort-noise} we obtain the stochastic 3D Navier-Stokes equations in Stratonovich form:
  \begin{equation}\label{SNSE-vort}
  \d \xi + \L_u \xi\,\d t = \Delta \xi\,\d t + \sqrt{\frac32 \kappa} \sum_{k\in \Z^3_0} \sum_{i=1}^2 \theta_k \Pi( \sigma_{k,i}\cdot \nabla \xi) \circ \d W^{k,i}_t,
  \end{equation}
where we have denoted for simplicity $\L_u \xi= u\cdot \nabla\xi - \xi\cdot \nabla u$, i.e. the Lie derivative of $\xi$ with respect to $u$. We shall write $\sum_{k\in \Z^3_0} \sum_{i=1}^2$ simply as $\sum_{k,i}$. In It\^o form, the above equation reads as
  \begin{equation}\label{SNSE-vort-Ito}
  \d \xi + \L_u \xi\,\d t = \big[\Delta \xi + S_\theta^{(3)}(\xi)\big] \,\d t  +\sqrt{\frac32 \kappa} \sum_{k,i} \theta_k \Pi(\sigma_{k,i}\cdot \nabla \xi) \, \d W^{k,i}_t ,
  \end{equation}
where the Stratonovich-It\^o correction term is now given by
  \begin{equation}\label{Stra-Ito-corrector}
  S_\theta^{(3)}(\xi)= \frac32 \kappa \sum_{k,i} \theta_k^2\, \Pi\big[ \sigma_{k,i}\cdot \nabla \Pi (\sigma_{-k,i} \cdot\nabla \xi) \big].
  \end{equation}
We remark that this equation is equivalent to the Stratonovich equation \eqref{SNSE-vort}, which enjoys similar energy estimate \eqref{3D-NSE-energy-identity} as the deterministic system \eqref{3D-NSE-vort}, because the noise part vanishes in energy type computations. Therefore, a priori, we know that \eqref{SNSE-vort-Ito} is only locally well posed for general data, and it admits a unique global solution for initial vorticity $\xi_0$ with $L^2$-norm less than $r_0$, where $r_0$ is the same value as in the deterministic case (see the end of the paragraph containing \eqref{3D-NSE-energy-identity}).

Recall that $H$ is the space of square integrable and divergence free vector fields on $\T^3$ with zero mean. Given an initial vorticity $\xi_0\in H$, denote by $\xi(t;\xi_0,\kappa, \theta)$ the pathwise unique local solution to \eqref{SNSE-vort-Ito} and $\tau=\tau(\xi_0,\kappa,\theta)$ the random maximal time of existence. Denote by $B_H(L)=\{\xi\in H: \|\xi \|_{L^2} \leq L\}$ and recall the noise coefficients $\theta^N$ given in \eqref{theta-N-def} for some fixed $\gamma>0$. Here is the main result of this part.

\begin{theorem}[Quantitative estimate on blow-up probability]\label{thm-3D-SNSE}
Fix some $\alpha\in (0,1/2)$ and big $L>0$. There exist constants $C=C(\alpha, L,r_0)>0$ and $K(\alpha,L,r_0)>0$ such that if $\kappa\geq K(\alpha,L ,r_0)$, then for any $\xi_0 \in B_H(L)$, it holds
  \begin{equation}\label{thm-3D-SNSE.1}
  \P \big(  \tau(\xi_0,\kappa,\theta^N) =+\infty \big) \geq 1- C \bigg(\frac{\kappa^{1-\alpha}}{N^{\alpha}} + \frac{\kappa^{9/20}}{N^{3/5}} + \frac{\kappa}{N^{2\alpha}} \bigg).
  \end{equation}
\end{theorem}

This result will be proved in Section \ref{sec-proof-3D-NSE}. The parameter $\alpha$ is due to the introduction of cut-off in the nonlinear part, see \eqref{3D-NSE-cut-off} below. The constants $C(\alpha, L,r_0)$ and $K(\alpha, L,r_0)$ can be computed explicitly though their expressions are a little complicated. The above estimate shows that, given large $L>0$ which measures the size of initial data, we can fix some noise intensity $\kappa$ such that, uniformly in $\xi_0\in B_H(L)$, the probability of existence of global solutions to \eqref{SNSE-vort} can be made arbitrarily close to 1 by choosing the noise with sufficiently high Fourier modes.

\section{Preparations} \label{sec-preparations}

In this section we make some preparations for proving Theorems \ref{thm-NSE} and  \ref{thm-3D-SNSE}. Section \ref{subs-correctors} contains the quantitative convergence results for the Stratonovich-It\^o correctors which are the key ingredients of proofs in Sections \ref{sec-proof-2D-NSE} and \ref{sec-proof-3D-NSE}. We provide in Section \ref{subs-basic-estim} some basic estimates on the heat semigroup, and prove in Section \ref{subs-stoch-convol} some estimates on the stochastic convolution.

\subsection{Convergence of the Stratonovich-It\^o correctors}\label{subs-correctors}

Recall the Stratonovich-It\^o correctors $S_\theta^{(d)}(\cdot)$ defined in \eqref{Ito-correction} and \eqref{Stra-Ito-corrector}, respectively in the 2D and 3D case. Due to the presence of the Helmholtz-Leray projection operator $\Pi$, it is difficult to estimate $S_\theta^{(d)}(\cdot)$ for general $\theta\in \ell^2(\Z^d_0),\, d=2,3$. Therefore, we consider the special sequence $\{\theta^N \}_N \subset \ell^2(\Z^d_0)$ given in \eqref{theta-N-def}. This choice is inspired by \cite[Theorem 5.1]{FlaLuo21} which deals with the 3D case, and it is proved that $S_{\theta^N}^{(3)}(v)$ converges in $L^2(\T^3,\R^3)$ to $\frac 35\kappa \Delta v$ for smooth vector fields $v$. Here we improve this result by establishing  quantitative convergence rates.

\begin{theorem}\label{thm-Ito-corrector}
Let $\theta^N$ be defined as in \eqref{theta-N-def}, $N\geq 1$. There exists a constant $C>0$, independent of $N\geq 1$, such that for any $s\in \R$ and $\alpha\in [0,1]$, and
\begin{itemize}
\item (2D case) for any divergence free field $v\in H^s(\T^2,\R^2)$, it holds
  \begin{equation}\label{thm-Ito-corrector.1}
  \bigg\| S_{\theta^N}^{(2)} (v)- \frac14 \kappa \Delta v \bigg\|_{H^{s-2-\alpha}} \leq C \frac{\kappa}{N^{\alpha}} \|v \|_{H^s};
  \end{equation}
\item (3D case) for any divergence free field $v\in H^s(\T^3,\R^3)$, it holds
  \begin{equation}\label{thm-Ito-corrector.2}
  \bigg\| S_{\theta^N}^{(3)} (v)- \frac35 \kappa \Delta v \bigg\|_{H^{s-2-\alpha}} \leq C \frac{\kappa}{N^{\alpha}} \|v \|_{H^s}.
  \end{equation}
\end{itemize}
\end{theorem}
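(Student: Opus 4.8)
The plan is to compute everything explicitly in Fourier series. Writing $v=\sum_{k}v_k e_k$ with $v_k\cdot k=0$ (divergence free) and using $\sigma_{k,i}=a_{k,i}e_k$ together with $\sigma_{-k,i}\cdot\nabla e_m = 2\pi i(a_{k,i}\cdot m)e_{m-k}$ (since $a_{-k,i}=a_{k,i}$), one sees that $\sigma_{k,i}\cdot\nabla\Pi(\sigma_{-k,i}\cdot\nabla v)$ is a sum over $m$ of terms supported on the single frequency $m$, because the $+k$ and $-k$ shifts cancel. After applying the outer $\Pi$, the $m$-th Fourier coefficient of $S_{\theta^N}^{(d)}(v)$ becomes $-(2\pi)^2 c_d\kappa\sum_k(\theta_k^N)^2 (a_{k}\cdot m)^2\,M_{k,m}\,v_m$, where $M_{k,m}=\big(I_d-\tfrac{m\otimes m}{|m|^2}\big)\big(I_d-\tfrac{(m-k)\otimes(m-k)}{|m-k|^2}\big)$ and $c_d$ is the relevant constant ($1$ for $d=2$, $\tfrac34$ for $d=3$), with a sum over $i$ in 3D. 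I would first record this identity as a lemma (this is essentially Lemma~\ref{lem-append-exress} referred to in the text). The target operator $\tfrac{c_d'}{}\kappa\Delta v$ has $m$-th coefficient $-(2\pi)^2 c_d'\kappa|m|^2 v_m$ with $c_d'=\tfrac14$ (2D) or $\tfrac35$ (3D), so the whole problem reduces to estimating, uniformly in $m$, the matrix
\[
R_N(m):=\sum_k (\theta_k^N)^2\Big[\sum_i (a_{k,i}\cdot m)^2 M_{k,m}\Big] - c_d'\,|m|^2\, \Big(I_d-\tfrac{m\otimes m}{|m|^2}\Big),
\]
acting on the constrained space $m^\perp$, and then converting an $\ell^\infty$-in-$m$ bound on $\|R_N(m)\|/|m|^2$ (or a suitably weighted version) into the $H^{-1-\alpha}$ estimate.

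Next I would do the asymptotic analysis of $R_N(m)$. The key simplification is that on the support of $\theta^N$ one has $N\le|k|\le 2N$, so for fixed $m$ and $N\to\infty$ the ratio $|m|/|k|\to 0$; expanding $\tfrac{(m-k)\otimes(m-k)}{|m-k|^2}$ around $\tfrac{k\otimes k}{|k|^2}$ gives $M_{k,m} = \big(I_d-\tfrac{m\otimes m}{|m|^2}\big)\big(I_d-\tfrac{k\otimes k}{|k|^2}\big) + O(|m|/|k|)$. Using symmetry of $\theta^N$ and the fact that the sum $\sum_{N\le|k|\le 2N}(\theta_k^N)^2(\cdot)$ is (up to the $O(|m|/N)$ error) an average over a spherical shell, the angular integral $\int_{\mathbb S^{d-1}}(\omega\cdot m)^2\big(I_d-\tfrac{\omega\otimes\omega}{}\big)\big(I_d-\tfrac{m\otimes m}{|m|^2}\big)\,d\omega$ (summed over $i$, which just replaces $\sum_i a_{k,i}\otimes a_{k,i}$ by $I_d-\tfrac{k\otimes k}{|k|^2}$) can be evaluated using the standard moment formulas for the uniform measure on $\mathbb S^{d-1}$ (second and fourth moments of a unit vector). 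This is exactly the computation that produces the constants $\tfrac14$ and $\tfrac35$, and one checks the leading term equals $c_d'|m|^2(I_d-\tfrac{m\otimes m}{|m|^2})$, so $R_N(m)$ is \emph{purely} the remainder. The magnitude of the remainder: the Taylor expansion error contributes $\lesssim |m|\cdot|m|^2\cdot$(something)$\,/N$-type terms after summing $(\theta_k^N)^2$, and — crucially — the discretization error of replacing the lattice shell sum by the spherical average is controlled because $\|\theta^N\|_{\ell^\infty}\sim N^{-d/2}$ while there are $\sim N^d$ lattice points in the shell, so each point's weight is $O(N^{-d})$ and the lattice-vs-continuum discrepancy over a smooth function on the shell is $O(N^{-1})$ relative error. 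Net: $\|R_N(m)\| \lesssim \kappa\,|m|^2\,\min(1,|m|/N)$, and more precisely, interpolating between the trivial bound ($\alpha=0$: just the triangle inequality, $\|R_N(m)\|\lesssim\kappa|m|^2$) and the $\alpha=1$ bound ($\|R_N(m)\|\lesssim \kappa|m|^3/N$), one gets $\|R_N(m)\|\lesssim \kappa|m|^{2}\,(|m|/N)^{\alpha}$ for $\alpha\in[0,1]$.

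Finally I would assemble the Sobolev estimate. With $w:=S_{\theta^N}^{(d)}(v)-c_d'\kappa\Delta v$ we have $\widehat w(m)=-(2\pi)^2 R_N(m)\,v_m$, hence
\[
\|w\|_{H^{-1-\alpha}}^2 = \sum_m (1+|m|^2)^{-1-\alpha}|\widehat w(m)|^2 \lesssim \kappa^2\sum_m (1+|m|^2)^{-1-\alpha}\,|m|^{4}\,\Big(\frac{|m|}{N}\Big)^{2\alpha}|v_m|^2,
\]
and since $(1+|m|^2)^{-1-\alpha}|m|^{4}|m|^{2\alpha} = (1+|m|^2)^{-1-\alpha}|m|^{2+2\alpha}|m|^2 \lesssim |m|^2 \le (1+|m|^2)$, this is $\lesssim (\kappa/N^\alpha)^2\sum_m(1+|m|^2)|v_m|^2 = (\kappa/N^\alpha)^2\|v\|_{H^1}^2$, which is \eqref{thm-Ito-corrector.1}–\eqref{thm-Ito-corrector.2}. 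The constant is manifestly independent of $N$. The main obstacle is the middle step — the careful, uniform-in-$m$ bookkeeping of the three error sources (Taylor expansion in $|m|/|k|$, the lattice-sum vs.\ spherical-integral discrepancy, and the $i$-summation / projection algebra), and verifying that the leading spherical-moment term reproduces exactly $\tfrac14\kappa\Delta$ resp.\ $\tfrac35\kappa\Delta$ so that no $O(|m|^2)$ term survives in $R_N(m)$; this is where the improvement over the qualitative \cite[Theorem~5.1]{FlaLuo21} lies, and it forces one to track the shell-sum error at relative order $N^{-1}$ rather than just $o(1)$.
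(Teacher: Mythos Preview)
Your proposal is correct and follows essentially the same approach as the paper: Fourier expansion of $S_{\theta^N}^{(d)}(v)$ at each frequency, then the three error sources you name (replacing $(m-k)/|m-k|$ by $k/|k|$ at cost $O(|m|/N)$, lattice-sum versus shell-integral at relative cost $O(1/N)$, and the exact spherical-moment computation yielding $\tfrac14$ resp.\ $\tfrac35$) are precisely the paper's Lemma~6.4, Lemma~6.5, and the evaluation of $J_N$ in the proof of Proposition~6.3. The only cosmetic differences are that the paper first splits off the $\Pi^\perp$ part and works in the scalar basis $\{\sigma_l\}$ rather than with your matrices $M_{k,m}$, and that for the final step it splits the $l$-sum at a threshold $L=N$ rather than using your equivalent per-mode interpolation $\min(A,B)\le A^{1-\alpha}B^\alpha$; also, your prefactors $c_d=1,\tfrac34$ are off by a factor of~$2$ (the paper has $2\kappa$ and $\tfrac32\kappa$ in front of the $k$-sum), but this is harmless bookkeeping.
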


To prove this theorem, we shall follow some computations in the proof of \cite[Theorem 5.1]{FlaLuo21}, but we need a few key improvements in order to find uniform constants independent of $N$. The proof is a little long and will be postponed to the appendix, where we prove \eqref{thm-Ito-corrector.1} in detail and briefly discuss the changes needed for proving \eqref{thm-Ito-corrector.2}.

\subsection{Basic properties of the heat semigroup} \label{subs-basic-estim}

First we state a well known property of the heat semigroup $\{e^{t \Delta}\}_{t\geq 0}$, cf. \cite{Pazy}.

\begin{lemma}\label{lem:heat-kernel-estim}
Let $u\in H^{\alpha}$, $\alpha\in\mathbb{R}$. Then for any $\rho\geq0$, it holds $\|e^{t\Delta} u \|_{H^{\alpha+\rho}} \leq C_{\rho}\, t^{-\rho/2} \|u\|_{H^{\alpha}}$ for some constant increasing in $\rho$.
\end{lemma}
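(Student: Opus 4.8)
The plan is to diagonalise the heat semigroup in the Fourier basis and reduce the statement to the elementary one-variable inequality $\sup_{\mu\ge 0}\mu^{\rho}e^{-c\mu}<\infty$. Since every function space in the paper consists of mean-zero functions, I would first expand $u=\sum_{k\in\Z^d_0}u_k e_k$; from $\Delta e_k=-4\pi^2|k|^2 e_k$ one gets $e^{t\Delta}u=\sum_{k\in\Z^d_0}e^{-4\pi^2|k|^2 t}u_k e_k$. On mean-zero functions the Sobolev norms satisfy $\|w\|_{H^s}^2\sim\sum_{k\in\Z^d_0}|k|^{2s}|w_k|^2$, with equivalence constants depending only on $s$ (all choices of the $2\pi$-normalisation agree up to such constants because $|k|\ge 1$ for $k\ne 0$), so it suffices to compare the two Fourier-side quadratic forms. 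If $e^{t\Delta}$ is read with the projected Laplacian $\Pi\Delta$, nothing changes: $\Pi$ is a Fourier multiplier of operator norm $\le 1$ that commutes with $e^{t\Delta}$.

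The computation is then direct:
\[
\|e^{t\Delta}u\|_{H^{\alpha+\rho}}^2\;\sim\;\sum_{k\in\Z^d_0}|k|^{2(\alpha+\rho)}e^{-8\pi^2|k|^2 t}|u_k|^2
=\sum_{k\in\Z^d_0}|k|^{2\alpha}|u_k|^2\cdot\Bigl(|k|^{2\rho}e^{-8\pi^2|k|^2 t}\Bigr)
\le\Bigl(\sup_{\lambda>0}\lambda^{\rho}e^{-8\pi^2\lambda t}\Bigr)\sum_{k\in\Z^d_0}|k|^{2\alpha}|u_k|^2 ,
\]
and the last sum is $\sim\|u\|_{H^\alpha}^2$. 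Substituting $\mu=\lambda t$ gives $\sup_{\lambda>0}\lambda^{\rho}e^{-8\pi^2\lambda t}=t^{-\rho}\sup_{\mu>0}\mu^{\rho}e^{-8\pi^2\mu}=t^{-\rho}\bigl(\rho/(8\pi^2 e)\bigr)^{\rho}$, with the convention that this equals $1$ at $\rho=0$ (indeed $e^{-8\pi^2\lambda t}\le 1$). Taking square roots yields the claimed bound with $C_\rho$ the product of the (square root of the) norm-equivalence constant and $\max\{1,(\rho/(8\pi^2 e))^{\rho}\}$. A one-line check of $h(\rho):=(\rho/(8\pi^2 e))^{\rho}$ — it decreases from $1$ on $(0,8\pi^2)$, reaches a minimum, then increases and exceeds $1$ only for $\rho>8\pi^2 e$ — shows that $\max\{1,h(\rho)\}$, hence $C_\rho$, can be taken nondecreasing in $\rho$, as stated.

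I do not expect a genuine obstacle here: this is the textbook smoothing/$L^2$–$H^s$ bound for the torus heat semigroup. The only points warranting a sentence of care are the equivalence of $\|\cdot\|_{H^s}$ with its Fourier-multiplier form on mean-zero functions on $\T^d$, and the minor bookkeeping needed to present the constant as monotone in $\rho$; the heart of the argument is nothing more than the explicit maximisation of $\mu^{\rho}e^{-c\mu}$.
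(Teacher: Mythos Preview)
Your proof is correct and is exactly the standard Fourier-diagonalisation argument one expects here. The paper does not supply a proof of this lemma at all---it merely states it as ``a well known property of the heat semigroup''---so there is nothing to compare against; your write-up would serve perfectly well as the omitted justification.
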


The next lemma gives the regularizing properties of convolution with $\{e^{\delta t \Delta}\}_{t\geq 0}$, where $\delta>0$ is a fixed number. We refer to \cite[Lemma 2.3]{FGL21b} for a proof of the first result; the second one follows from similar computations.

\begin{lemma}\label{lem:heat-kernel}
For any $a<b $,  $\alpha\in\R$ and any $f\in L^{2}(a,b;H^{\alpha})$, it holds
   $$\bigg\|\int_a^t e^{\delta(t-s) \Delta} f_{s}\, \mathrm{d} s \bigg\|_{H^{\alpha+1}}^{2} \lesssim\frac1{\delta} \int_a^t \| f_{s}\|_{H^{\alpha}}^{2} \, \mathrm{d} s, \quad\forall\, t\in [a,b]. $$
Similarly, we have
  $$\int_a^b \bigg\|\int_a^t e^{\delta (t-s)\Delta} f_s \,\d s \bigg\|_{H^{\alpha+2}}^2 \,\d t \lesssim \frac1{\delta^2} \int_a^b \|f_s \|_{H^\alpha}^2\,\d s. $$
\end{lemma}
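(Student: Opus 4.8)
The plan is to prove Lemma \ref{lem:heat-kernel} by combining the smoothing estimate of Lemma \ref{lem:heat-kernel-estim} with the Cauchy--Schwarz inequality in the time variable, exploiting the integrability in $s$ of the kernel factor $(t-s)^{-\rho/2}$ for the relevant values of $\rho$. Everything is diagonal in the Fourier basis, so one may equivalently argue mode by mode, but the semigroup-norm formulation is cleaner.

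For the first estimate, fix $t\in[a,b]$ and write, using Minkowski's integral inequality and then Lemma \ref{lem:heat-kernel-estim} with $\rho=1$ applied to $e^{\delta(t-s)\Delta}$,
\begin{equation*}
\bigg\|\int_a^t e^{\delta(t-s)\Delta} f_s\,\d s\bigg\|_{H^{\alpha+1}}
\leq \int_a^t \big\| e^{\delta(t-s)\Delta} f_s \big\|_{H^{\alpha+1}}\,\d s
\lesssim \int_a^t \big(\delta(t-s)\big)^{-1/2} \|f_s\|_{H^\alpha}\,\d s .
\end{equation*}
Now apply Cauchy--Schwarz in $s$, splitting $(\delta(t-s))^{-1/2} = (\delta(t-s))^{-1/4}\cdot(\delta(t-s))^{-1/4}$ so that both factors are square-integrable near $s=t$; the first factor integrates to something controlled by $\delta^{-1/2}(b-a)^{1/2}$ — actually it is cleaner to instead bound $\int_a^t (\delta(t-s))^{-1/2}\,\d s \lesssim \delta^{-1/2}(t-a)^{1/2}$ and keep a weight. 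The slickest route: write the integrand as $\big[(\delta(t-s))^{-1/2}\big]^{1/2}\cdot\big[(\delta(t-s))^{-1/2}\big]^{1/2}\|f_s\|_{H^\alpha}$, apply Cauchy--Schwarz to get
\begin{equation*}
\bigg(\int_a^t (\delta(t-s))^{-1/2}\|f_s\|_{H^\alpha}\,\d s\bigg)^2
\leq \bigg(\int_a^t (\delta(t-s))^{-1/2}\,\d s\bigg)\bigg(\int_a^t (\delta(t-s))^{-1/2}\|f_s\|_{H^\alpha}^2\,\d s\bigg),
\end{equation*}
and bound the first parenthesis by $C\delta^{-1/2}(b-a)^{1/2}$ and the second, crudely, by $C\delta^{-1/2}\int_a^b\|f_s\|_{H^\alpha}^2\,\d s$ — but that produces $\delta^{-1}$ only at the cost of an extra $(b-a)^{1/2}$ factor, which matches the implicit-constant convention $\lesssim$ absorbing dependence on the (fixed) interval length. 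Since the statement uses $\lesssim$ and the interval $(a,b)$ is fixed throughout, this is acceptable; one gets exactly the claimed $\delta^{-1}\int_a^t\|f_s\|_{H^\alpha}^2\,\d s$.

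For the second estimate one integrates the first over $t\in[a,b]$: by the pointwise bound just obtained (before the final crude step),
\begin{equation*}
\int_a^b\bigg\|\int_a^t e^{\delta(t-s)\Delta}f_s\,\d s\bigg\|_{H^{\alpha+2}}^2\,\d t
\lesssim \int_a^b\int_a^t \big(\delta(t-s)\big)^{-1}\|f_s\|_{H^{\alpha+1}}\cdots
\end{equation*}
— more precisely apply Lemma \ref{lem:heat-kernel-estim} with $\rho=2$ to land in $H^{\alpha+2}$, producing the kernel $(\delta(t-s))^{-1}$, which is \emph{not} integrable at $s=t$; so here one must split the exponent, e.g. use $\rho$ slightly below $2$ twice, or better: factor $e^{\delta(t-s)\Delta} = e^{\delta(t-s)\Delta/2}e^{\delta(t-s)\Delta/2}$ and use $\rho=1$ on each half, giving $\|e^{\delta(t-s)\Delta}f_s\|_{H^{\alpha+2}} \lesssim (\delta(t-s))^{-1/2}\|e^{\delta(t-s)\Delta/2}f_s\|_{H^{\alpha+1}}$. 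Then set $g_s^{(t)} = e^{\delta(t-s)\Delta/2} f_s$ and apply the first estimate with $g$ in place of $f$ and $\delta/2$ in place of $\delta$ — but $g$ depends on $t$, so instead I would: apply Cauchy--Schwarz in $s$ with the integrable weight $(\delta(t-s))^{-1/2}$ to pull out $\delta^{-1/2}$, then use Fubini in $(s,t)$ together with $\int_s^b(\delta(t-s))^{-1/2}\,\d t\lesssim \delta^{-1/2}(b-a)^{1/2}$ and the contraction $\|e^{\delta(t-s)\Delta/2}f_s\|_{H^{\alpha+1}}\lesssim (\delta(t-s))^{-1/2}\|f_s\|_{H^\alpha}$ for the remaining factor, collecting a total of $\delta^{-2}$. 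The main obstacle is purely bookkeeping: arranging the split of the kernel singularity $(\delta(t-s))^{-1}$ into two square-integrable pieces while tracking that the $\delta$-powers combine to exactly $\delta^{-2}$ and that the $(b-a)$-dependence is harmless under the $\lesssim$ convention; there is no conceptual difficulty, as the whole lemma is a maximal-regularity statement for the heat semigroup proved by elementary convolution estimates.
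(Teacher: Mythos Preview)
There is a genuine gap in your argument for the first estimate. After Cauchy--Schwarz you arrive at the product
\[
\bigg(\int_a^t (\delta(t-s))^{-1/2}\,\d s\bigg)\bigg(\int_a^t (\delta(t-s))^{-1/2}\|f_s\|_{H^\alpha}^2\,\d s\bigg),
\]
and you then claim the second factor is bounded by $C\delta^{-1/2}\int_a^b\|f_s\|_{H^\alpha}^2\,\d s$. This is false: the weight $(t-s)^{-1/2}$ is unbounded near $s=t$, so no such inequality holds for general $f\in L^2(a,b;H^\alpha)$. (Take $\|f_s\|_{H^\alpha}^2=(t-s)^{-1/2+\eps}$ near $s=t$; as $\eps\downarrow 0$ the second factor diverges while $\int\|f_s\|^2\,\d s$ stays bounded.) Even allowing the implicit constant to depend on $b-a$ does not rescue the step.

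The underlying issue is that the operator-norm bound $\|e^{\tau\Delta}\|_{H^\alpha\to H^{\alpha+1}}\lesssim\tau^{-1/2}$ throws away the mode-wise exponential decay $e^{-4\pi^2\delta|k|^2(t-s)}$, and once lost it cannot be recovered by Cauchy--Schwarz in time. The paper's proof stays in Fourier variables: for each mode $k$ one has
\[
\Big|\int_a^t e^{-4\pi^2\delta|k|^2(t-s)}\langle f_s,e_k\rangle\,\d s\Big|^2
\leq \int_a^t e^{-8\pi^2\delta|k|^2(t-s)}\,\d s\cdot\int_a^t|\langle f_s,e_k\rangle|^2\,\d s
\lesssim \frac{1}{\delta|k|^2}\int_a^t|\langle f_s,e_k\rangle|^2\,\d s,
\]
and the factor $|k|^{-2}$ is precisely what converts the $H^{\alpha+1}$ norm into the $H^\alpha$ norm after summing over $k$. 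Note that this produces a constant \emph{independent} of $b-a$. Your sketch for the second estimate inherits the same defect: splitting the semigroup and iterating the norm bound still leaves a non-integrable time kernel, whereas the paper again works mode by mode, applying Cauchy--Schwarz with the weight $e^{-4\pi^2\delta|k|^2(t-s)}$ and then Fubini in $(s,t)$ to extract a second factor of $(\delta|k|^2)^{-1}$.
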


\subsection{Estimates on the stochastic convolution} \label{subs-stoch-convol}

Let $\delta>0$ and $P_t= e^{\delta t \Delta}, \, t\geq 0$ be the heat semigroup. Let $\{f_t \}_{t\geq 0}$ be a stochastic process of vector fields on $\T^d$ satisfying the energy inequality
  \begin{equation}\label{regularity}
  \|f_t \|_{L^2}^2 + \nu \int_s^t \|\nabla f_r\|_{L^2}^2\,\d r \leq \|f_s \|_{L^2}^2, \quad \forall\, t>s \geq 0.
  \end{equation}
In this part we follow the ideas of \cite[Lemma 5.3]{FGL21b} to deduce an estimate on the stochastic convolution
  $$Z_{s,t} = \int_s^t P_{t-r}\,(\d W_r\cdot\nabla f_r) = \sqrt{C_d \kappa} \sum_{k,i} \theta_k \int_s^t P_{t-r} \Pi(\sigma_{k,i} \cdot\nabla f_r)\,\d W^{k,i}_r, $$
where $\Pi$ is the Helmholtz projection; we write $Z_t$ for $Z_{0,t}$. In the proof below, we will apply the BDG inequality to martingales taking values in general Hilbert spaces, see e.g. \cite[Proposition 2.2.9]{PR07}.

\begin{theorem}\label{thm-stoch-convol}
For any $n\geq 0$, and any $0<\alpha<1 \leq \frac d2 <\beta\le \frac d2 +2$, it holds
  $$\int_{n}^{n+1} \E \|Z_{n,t} \|_{L^2}^2 \,\d t \lesssim_{\alpha, \beta} \kappa\, \nu^{-\frac{\beta}{\alpha+\beta}} \delta^{-\frac{\alpha(2\beta+d+4)} {4(\alpha+ \beta)}} \|\theta \|_{\ell^\infty}^{\frac{2\alpha}{\alpha+\beta}} \E\|f_n \|_{L^2}^2.$$
\end{theorem}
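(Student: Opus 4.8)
The plan is to estimate $Z_{n,t} = \int_n^t P_{t-r}\,\d W_r$ by the It\^o isometry, obtaining (writing $P_s = e^{\delta s\Delta}$)
\[
\E\|Z_{n,t}\|_{L^2}^2 = 2 C_d\kappa \sum_{k,i}\theta_k^2 \int_n^t \E\big\|P_{t-r}\Pi(\sigma_{k,i}\cdot\nabla f_r)\big\|_{L^2}^2\,\d r,
\]
using \eqref{noise.1} for the quadratic variation. Since $\|\Pi\cdot\|_{L^2}\le\|\cdot\|_{L^2}$ and $\sum_i a_{k,i}\otimes a_{k,i}\le I_d$, a first crude bound replaces $\sum_{k,i}\theta_k^2\|\sigma_{k,i}\cdot\nabla f_r\|_{L^2}^2$ by $\lesssim \|\theta\|_{\ell^\infty}^2\|\nabla f_r\|_{L^2}^2$; but then the heat-kernel smoothing $\|P_{t-r}g\|_{L^2}\le\|g\|_{L^2}$ is wasted, and after integrating \eqref{regularity} one would only get a bound like $\kappa\,\nu^{-1}\|\theta\|_{\ell^\infty}^2\,\E\|f_n\|_{L^2}^2$. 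The point of the theorem is to interpolate: for $\beta>d/2$ Sobolev embedding gives $\|\sigma_{k,i}\cdot\nabla f_r\|_{H^{-\beta}}\lesssim \|\sigma_{k,i}\|_{L^\infty}\|f_r\|_{H^{1-\beta}}\lesssim \|f_r\|_{L^2}$ (absorbing $\|f_r\|_{H^{1-\beta}}\le\|f_r\|_{L^2}$ since $\beta\ge1$), while $\|\sigma_{k,i}\cdot\nabla f_r\|_{H^\alpha}\lesssim\|f_r\|_{H^{1+\alpha}}$ up to the factor $|k|^{\alpha}$. Interpolating between $H^{-\beta}$ and $H^{\alpha}$ at exponent $\lambda=\beta/(\alpha+\beta)$ places $g_{k,i,r}:=\Pi(\sigma_{k,i}\cdot\nabla f_r)$ in $L^2$ with
\[
\|g_{k,i,r}\|_{L^2}^2 \lesssim \|g_{k,i,r}\|_{H^{-\beta}}^{2\lambda}\|g_{k,i,r}\|_{H^\alpha}^{2(1-\lambda)},
\]
and then $\|P_{t-r}g_{k,i,r}\|_{L^2}\le C\,(\delta(t-r))^{-\rho/2}\|g_{k,i,r}\|_{H^{-\rho}}$ from Lemma \ref{lem:heat-kernel-estim}; applying the latter with $\rho=\beta$ converts the full smoothing power into a time weight $(\delta(t-r))^{-\beta/2}$ acting on the $H^{-\beta}$ norm alone, which is $\lesssim\|f_r\|_{L^2}$ and needs no integrability in $r$ from \eqref{regularity}.

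Concretely I would split $\|P_{t-r}g_{k,i,r}\|_{L^2}$ Fourier-mode-by-mode and apply the elementary inequality $x^{2(\alpha+2)}e^{-cx^2 s}\le C_{\alpha}(cs)^{-(\alpha+2)}\wedge x^{2(\alpha+2)}$, or more cleanly: write $\|P_{t-r}g\|_{L^2}^2 = \sum_k e^{-8\pi^2\delta(t-r)|k|^2}|\hat g(k)|^2$ and bound $e^{-8\pi^2\delta(t-r)|k|^2}\le \big(e^{-8\pi^2\delta(t-r)|k|^2}\big)^{1-\eta}\big(e^{-8\pi^2\delta(t-r)|k|^2}\big)^{\eta}$, using the first factor with the $H^{-\beta}$-side estimate (i.e. $\le C(\delta(t-r)|k|^2)^{-\beta}$ after pulling out $|k|^{-2\beta}$) and the second with the $H^{\alpha}$-side to supply the factor $|k|^{2(1+\alpha)}$ needed from $\nabla f_r$ — choosing the interpolation so that the leftover power of $|k|$ is exactly $-2\beta\cdot(\text{something})$ summable in $k$. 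This is the technical heart and I expect it to be the main obstacle: balancing the three competing resources (the $|k|^{\alpha}$ growth from $\sigma_{k,i}\cdot\nabla$, the negative power $|k|^{-2\beta}$ from the Sobolev trick, and the time singularity $(t-r)^{-\beta/2}$ which must stay integrable near $r=t$) to land on precisely the stated exponents $\nu^{-\beta/(\alpha+\beta)}$, $\delta^{-\alpha(2\beta+d+4)/(4(\alpha+\beta))}$, $\|\theta\|_{\ell^\infty}^{2\alpha/(\alpha+\beta)}$. The appearance of $\nu$ should come from the one place where \eqref{regularity} is genuinely used: after interpolation a residual fraction $1-\lambda = \alpha/(\alpha+\beta)$ of the estimate still carries an $H^{1+\cdot}$-type norm of $f_r$ that must be integrated in $r$, and \eqref{regularity} bounds $\nu\int_n^{n+1}\|\nabla f_r\|_{L^2}^2\,\d r\le\|f_n\|_{L^2}^2$, yielding the factor $\nu^{-(1-\lambda)}=\nu^{-\beta/(\alpha+\beta)}$... wait, $1-\lambda=\alpha/(\alpha+\beta)$, so I would instead route the $H^{-\beta}$ side (the $\lambda$-fraction, with no derivatives on $f$) through the pointwise-in-$r$ bound $\|f_r\|_{L^2}\le\|f_n\|_{L^2}$ and the $H^{\alpha}$ side (the $(1-\lambda)$-fraction) through the time integral; then the exponent of $\nu$ is $1-\lambda=\alpha/(\alpha+\beta)$ — so I will double-check which side carries the $\nabla f$ and reconcile with the stated $\nu^{-\beta/(\alpha+\beta)}$, possibly meaning one applies Hölder in $r$ with conjugate exponents $1/\lambda,1/(1-\lambda)$ to the product and uses \eqref{regularity} on the whole $\nabla f_r$ factor raised to power $2$.

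For the $\delta$-counting: Lemma \ref{lem:heat-kernel-estim} contributes $\delta^{-\rho/2}$ with $\rho$ chosen as the effective smoothing order, and the extra $+d+4$ in the numerator of the $\delta$-exponent should emerge from summing $\sum_k |k|^{-2\beta+2+2\alpha}\cdots$ against the Gaussian — more precisely from estimating $\sup_x x^{\text{(pos.\ power)}}e^{-cx^2\delta(t-r)}$, which produces an additional negative power of $\delta(t-r)$, and then from the final $\d t$-integration over $[n,n+1]$ of $(t-r)^{-(\text{something})}$, legitimate precisely because $\alpha<1$ keeps that exponent $<1$. Throughout I would keep $\theta_k^2\le\|\theta\|_{\ell^\infty}^{2}$ but only raise it to the power $1-\lambda=\alpha/(\alpha+\beta)$ where it appears multiplied against a mode-sum that is otherwise bounded uniformly in $\theta$ (the $\lambda$-fraction uses $\sum_k\theta_k^2\le\|\theta\|_{\ell^2}^2=1$), which is exactly how $\|\theta\|_{\ell^\infty}^{2\alpha/(\alpha+\beta)}$ appears rather than $\|\theta\|_{\ell^\infty}^{2}$. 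Finally, summing over $i=1,\dots,d-1$ and over $k$ and integrating $\d t$ over the unit interval assembles the claimed bound; the condition $1\le d/2<\beta$ ensures the Sobolev embedding $H^{\beta-1}\hookrightarrow L^\infty$ used to dispose of $\sigma_{k,i}$ (constant uniform in $k$ since $\|\sigma_{k,i}\|_{L^\infty}=1$), and $\alpha>0$ provides the room needed for the $|k|$-sum and the time integral to converge.
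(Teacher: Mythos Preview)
Your proposal contains the right conceptual ingredients---interpolation between a positive Sobolev index $H^\alpha$ and a negative one $H^{-\beta}$, the energy inequality \eqref{regularity} supplying the $\nu^{-1}$, the distinction between $\|\theta\|_{\ell^2}=1$ and $\|\theta\|_{\ell^\infty}$---but the assembly is where it goes wrong. You try to interpolate at the level of each integrand $g_{k,i,r}=\Pi(\sigma_{k,i}\cdot\nabla f_r)$ and then sum in $k$, and this forces you into the mode-by-mode exponent balancing that you yourself cannot close (your own ``wait'' and ``double-check'' flag the unresolved $\nu$-exponent). There is also a concrete obstruction: using Lemma~\ref{lem:heat-kernel-estim} with $\rho=\beta>1$ to land in $L^2$ from $H^{-\beta}$ produces, after squaring, a time weight $(t-r)^{-\beta}$ which is not integrable near $r=t$, and nothing in your interpolation repairs this before you integrate in $r$.

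The clean route is to \emph{postpone} the interpolation: estimate $\int_n^{n+1}\E\|Z_{n,t}\|_{H^\alpha}^2\,\d t$ and $\int_n^{n+1}\E\|Z_{n,t}\|_{H^{-\beta}}^2\,\d t$ separately, then use $\|Z_{n,t}\|_{L^2}\le \|Z_{n,t}\|_{H^\alpha}^{\beta/(\alpha+\beta)}\|Z_{n,t}\|_{H^{-\beta}}^{\alpha/(\alpha+\beta)}$ together with H\"older in $(\omega,t)$. For the $H^\alpha$ bound you smooth by only $\alpha<1$ orders (so the time singularity $(t-r)^{-\alpha}$ is integrable), bound $\|\sigma_{k,i}\cdot\nabla f_r\|_{L^2}\le\|\nabla f_r\|_{L^2}$, use $\sum_k\theta_k^2=1$, and invoke \eqref{regularity}; this is where the full factor $\nu^{-1}$ enters, and after raising to the power $\beta/(\alpha+\beta)$ it becomes $\nu^{-\beta/(\alpha+\beta)}$, resolving your confusion. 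For the $H^{-\beta}$ bound you again smooth by strictly less than one order (set $\eps=(2\beta-d)/4$ and target $H^{-d/2-2\eps}$), pull out $\|\theta\|_{\ell^\infty}^2$, and then use the key identity you did not locate:
\[
\sum_{k\in\Z^d_0}\|e_k f_r\|_{H^{-d/2-\eps}}^2
=\sum_l |l|^{-d-2\eps}\sum_k|\<f_r,e_{l-k}\>|^2
=\|f_r\|_{L^2}^2\sum_l|l|^{-d-2\eps}
\lesssim_\eps \|f_r\|_{L^2}^2,
\]
which is exactly what makes the $k$-sum converge after $\|\theta\|_{\ell^\infty}$ has been extracted, and requires only $\|f_r\|_{L^2}\le\|f_n\|_{L^2}$ (no $\nu$). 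Combining the two bounds via H\"older then delivers the stated exponents with no ambiguity.
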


\begin{proof}
Given an $\alpha\in (0,1)$, by the definition of $Z_{s,t}$, we have
  $$\aligned
  \E \|Z_{n,t} \|_{H^\alpha}^2 &= C_d \kappa \, \E \bigg\|\sum_{k,i} \theta_k \int_n^t P_{t-r} \Pi (\sigma_{k,i}\cdot\nabla f_r)\,\d W^{k,i}_r \bigg\|_{H^\alpha}^2 \\
  &\lesssim \kappa\, \E \bigg[\sum_{k,i} \theta_k^2 \int_n^t \| P_{t-r} \Pi(\sigma_{k,i}\cdot\nabla f_r) \|_{H^\alpha}^2 \,\d r \bigg] \\
  &\lesssim \frac{\kappa}{\delta^\alpha} \, \E \bigg[\sum_{k,i} \theta_k^2 \int_n^t \frac1{(t-r)^\alpha} \| \sigma_{k,i} \cdot\nabla f_r \|_{L^2}^2 \,\d r \bigg],
  \endaligned $$
where the last step is due to Lemma \ref{lem:heat-kernel-estim} and the fact that $\|\Pi X\|_{H^a} \leq \|X\|_{H^a}$ for all $X\in H^a(\T^d,\R^d)$ and $a\in \R$. Using the simple inequality $\|\sigma_{k,i} \cdot\nabla f_r \|_{L^2} \leq \| \nabla f_r \|_{L^2}$, we obtain
  $$\aligned
  \int_{n}^{n+1} \E \|Z_{n,t} \|_{H^\alpha}^2 \,\d t &\lesssim \frac{\kappa}{\delta^\alpha} \|\theta \|_{\ell^2}^2 \int_{n}^{n+1} \E \int_n^t \frac1{(t-r)^\alpha} \| \nabla f_r \|_{L^2}^2 \,\d r \d t \\
  &= \frac{\kappa}{\delta^\alpha} \|\theta \|_{\ell^2}^2\, \E \int_{n}^{n+1} \| \nabla f_r \|_{L^2}^2 \int_r^{n+1} \frac1{(t-r)^\alpha} \, \d t\, \d r \\
  &\leq \frac{\kappa}{(1-\alpha) \delta^\alpha} \E \int_{n}^{n+1} \| \nabla f_r \|_{L^2}^2\, \d r,
  \endaligned $$
where the last step follows from $\|\theta \|_{\ell^2}=1$. Now by \eqref{regularity} we arrive at
  \begin{equation}\label{thm-stoch-convol.1}
  \int_{n}^{n+1} \E \|Z_{n,t} \|_{H^\alpha}^2 \,\d t \lesssim_\alpha \frac{\kappa}{\delta^\alpha \nu} \E \|f_n\|_{L^2}^2.
  \end{equation}

Next, for $\beta\in (d/2, 2+d/2]$, we define $\eps=(2\beta -d)/4$ which belongs to $ (0,1]$; then,
  $$\aligned
  \E \|Z_{n,t} \|_{H^{-\beta}}^2 &\leq C_d \kappa\, \E\bigg[\sum_{k,i} \theta_k^2 \int_n^t \| P_{t-r} \Pi(\sigma_{k,i} \cdot \nabla f_r)\|_{H^{-d/2-2\eps}}^2 \,\d r \bigg] \\
  &\lesssim \frac{\kappa}{\delta^{1-\eps}} \E\bigg[\sum_{k,i} \theta_k^2 \int_n^t \frac{\| \sigma_{k,i} \cdot \nabla f_r \|_{H^{-1-d/2-\eps}}^2}{(t-r)^{1-\eps}} \,\d r \bigg],
  \endaligned $$
where the last step follows from Lemma \ref{lem:heat-kernel-estim}. One has
  \begin{equation}\label{thm-stoch-convol.1.5}
  \E \|Z_{n,t} \|_{H^{-\beta}}^2 \lesssim \frac{\kappa}{\delta^{1-\eps}} \|\theta \|_{\ell^\infty}^2\, \E \int_n^t \frac{1}{(t-r)^{1-\eps}} \sum_{k,i} \| \sigma_{k,i} \cdot \nabla f_r \|_{H^{-1-d/2-\eps}}^2 \,\d r.
  \end{equation}
Since $\sigma_{k,i}$ is divergence free, it holds
  $$\| \sigma_{k,i} \cdot \nabla f_r \|_{H^{-1-d/2-\eps}} = \|\nabla\cdot (\sigma_{k,i}\otimes f_r)\|_{H^{-1-d/2-\eps}} \lesssim \|\sigma_{k,i}\otimes f_r \|_{H^{-d/2-\eps}} \lesssim \|e_k f_r \|_{H^{-d/2-\eps}}. $$
We have
  $$\aligned \sum_k \|e_k f_r \|_{H^{-d/2-\eps}}^2 &= \sum_k \sum_l \frac1{|l|^{d+2\eps}} |\<e_k f_r, e_{l}\>|^2 = \sum_l \frac1{|l|^{d+2\eps}} \sum_k |\<f_r, e_{l-k}\>|^2 \\
  &= \|f_r \|_{L^2}^2 \sum_l \frac1{|l|^{d+2\eps}} \lesssim \frac1\eps \|f_r \|_{L^2}^2,
  \endaligned $$
where the last step follows from the estimate
  $$\sum_l \frac1{|l|^{d+2\eps}} \leq \int_{\{x\in \R^d: |x|\geq 1/2\}} \frac{\d x}{|x|^{d+2\eps}} \lesssim_d \int_{1/2}^\infty \frac{\d s}{s^{1+2\eps}} \lesssim \frac1\eps. $$
As a result, for any $r\in [n, n+1]$,
  $$\sum_{k,i} \| \sigma_{k,i} \cdot \nabla f_r \|_{H^{-1-d/2-\eps}}^2 \lesssim \frac1\eps \|f_r \|_{L^2}^2 \leq \frac1\eps \|f_n \|_{L^2}^2, $$
where the last step is due to \eqref{regularity}. Substituting this estimate into \eqref{thm-stoch-convol.1.5} yields
  $$\aligned
  \E \|Z_{n,t} \|_{H^{-\beta}}^2 &\lesssim \frac{\kappa}{\delta^{1-\eps}} \|\theta \|_{\ell^\infty}^2\, \E \int_n^t \frac{\|f_n \|_{L^2}^2}{\eps (t-r)^{1-\eps}} \,\d r \lesssim \frac{\kappa}{\delta^{1-\eps} \eps^2} \|\theta \|_{\ell^\infty}^2\, \E\|f_n \|_{L^2}^2.
  \endaligned $$
Hence, recalling that $\eps=(2\beta -d)/4>0$,
  \begin{equation}\label{thm-stoch-convol.2}
  \int_n^{n+1} \E \|Z_{n,t} \|_{H^{-\beta}}^2\,\d t \lesssim_{d,\beta} \frac{\kappa}{\delta^{1+(d-2\beta)/4}} \|\theta \|_{\ell^\infty}^2\, \E\|f_n \|_{L^2}^2.
  \end{equation}

Finally, for $0<\alpha<1 \leq \frac d2 <\beta \le 2+ \frac d2$, by interpolation
  $$\|\phi \|_{L^2} \leq \|\phi \|_{H^\alpha}^{\beta/(\alpha+\beta)} \|\phi \|_{H^{-\beta}}^{\alpha/(\alpha+\beta)}, \quad \forall \phi \in H^{\alpha},$$
we have
  $$\aligned
  \int_{n}^{n+1} \E \|Z_{n,t} \|_{L^2}^2 \,\d t &\leq \int_{n}^{n+1} \E\Big( \|Z_{n,t} \|_{H^\alpha}^{2\beta/(\alpha+\beta)} \|Z_{n,t} \|_{H^{-\beta}}^{2\alpha/(\alpha+\beta)} \Big) \,\d t \\
  &\leq \bigg(\int_{n}^{n+1} \E \|Z_{n,t} \|_{H^\alpha}^{2} \,\d t \bigg)^{\frac{\beta}{\alpha+\beta}} \bigg(\int_{n}^{n+1} \E \|Z_{n,t} \|_{H^{-\beta}}^{2} \,\d t \bigg)^{\frac{\alpha}{\alpha+\beta}}
  \endaligned $$
by H\"older's inequality. Inserting the estimates \eqref{thm-stoch-convol.1} and \eqref{thm-stoch-convol.2} into the right-hand side of the above inequality, we obtain the desired result.
\end{proof}

The following estimate on stochastic convolution has been proved in \cite[Corollary 2.6]{FGL21b}; there it is assumed that $f$ is a process of real-valued functions, but the same computations still work in the case of vector fields, similarly as in the above result. Note that we only require $f$ to be bounded in $L^2$ almost surely.

\begin{theorem}\label{thm-stoch-convol-2}
Assume that $\P$-a.s. $\sup_{t\in [0,T]} \|f_t \|_{L^2} \leq R$ (rather than \eqref{regularity}). Then for any $\beta\in (0, d/2]$ and any $\eps\in (0,\beta]$, it holds
  $$\E \bigg[\sup_{t\in [0,T]} \|Z_{t} \|_{H^{-\beta}}^2 \bigg] \lesssim_{\eps, T} \kappa\delta^{\eps-1} \|\theta \|_{\ell^\infty}^{4(\beta-\eps)/d} R^2. $$
\end{theorem}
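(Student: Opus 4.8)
The plan is to combine the classical factorization method for stochastic convolutions with an interpolation between two elementary estimates in negative Sobolev spaces, thereby reproducing in the form needed here the argument of \cite[Corollary 2.6]{FGL21b}. Throughout, the projection $\Pi$ causes no trouble, since $\|\Pi X\|_{H^{-\beta}}\le\|X\|_{H^{-\beta}}$, and the vector structure is handled componentwise exactly as in the proof of Theorem \ref{thm-stoch-convol}; so one argues as in the scalar case.

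First, fix $\alpha\in(0,1/2)$ and use the identity $\int_s^t(t-r)^{\alpha-1}(r-s)^{-\alpha}\,\d r=\pi/\sin(\pi\alpha)$, the semigroup property $P_{t-r}P_{r-s}=P_{t-s}$ and a stochastic Fubini theorem to obtain the factorization $Z_t=c_\alpha\int_0^t(t-r)^{\alpha-1}P_{t-r}Y_r\,\d r$, with $Y_r=\int_0^r(r-s)^{-\alpha}P_{r-s}\,\d W_s$. Since $P_{t-r}$ is a contraction on $H^{-\beta}$, H\"older's inequality in time yields, for any $p>1/\alpha$,
\[
\sup_{t\in[0,T]}\|Z_t\|_{H^{-\beta}}^2\lesssim_{\alpha,T}\Big(\int_0^T\|Y_r\|_{H^{-\beta}}^p\,\d r\Big)^{2/p}.
\]
Taking expectations and using Jensen's inequality ($2/p<1$), it suffices to estimate $\int_0^T\E\|Y_r\|_{H^{-\beta}}^p\,\d r$. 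By the Burkholder--Davis--Gundy inequality for the $H^{-\beta}$-valued stochastic integral $Y_r$, together with the standing assumption $\sup_t\|f_t\|_{L^2}\le R$ a.s.,
\[
\E\|Y_r\|_{H^{-\beta}}^p\lesssim_p\Big(\kappa\int_0^r(r-s)^{-2\alpha}\,\mathcal K_{r-s}(\beta)\,\d s\Big)^{p/2},
\qquad
\mathcal K_{\rho}(\beta):=\sup_{\|g\|_{L^2}\le R}\ \sum_{k,i}\theta_k^2\,\|P_{\rho}\,\Pi(\sigma_{k,i}\cdot\nabla g)\|_{H^{-\beta}}^2 ,
\]
so everything reduces to bounding the kernel $\mathcal K_\rho(\beta)$ in terms of $R^2$, $\delta\rho$ and $\|\theta\|_{\ell^\infty}$.

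For the kernel I would pass to Fourier series: writing $\sigma_{k,i}=a_{k,i}e_k$ and using $a_{k,i}\cdot k=0$, the $n$-th Fourier coefficient of $P_\rho\Pi(\sigma_{k,i}\cdot\nabla g)$ equals $2\pi i\,e^{-4\pi^2\delta\rho|n|^2}(a_{k,i}\cdot n)\big(I_d-\tfrac{n\otimes n}{|n|^2}\big)g_{n-k}$; bounding $|(I_d-\tfrac{n\otimes n}{|n|^2})v|\le|v|$ and $\sum_i(a_{k,i}\cdot n)^2\le|n|^2$ gives $\mathcal K_\rho(\beta)\lesssim \sup_{\|g\|_{L^2}\le R}\sum_n|n|^{2-2\beta}e^{-8\pi^2\delta\rho|n|^2}\sum_k\theta_k^2|g_{n-k}|^2$. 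From here two complementary bounds follow. (i) If $\beta\le 1$: exchange the sums, estimate $|m+k|^{2-2\beta}e^{-8\pi^2\delta\rho|m+k|^2}\lesssim_\beta(\delta\rho)^{-(1-\beta)}$ by maximizing $x^{1-\beta}e^{-cx}$, and use $\|\theta\|_{\ell^2}=1$ and $\sum_m|g_m|^2=\|g\|_{L^2}^2\le R^2$, obtaining $\mathcal K_\rho(\beta)\lesssim_\beta(\delta\rho)^{-(1-\beta)}R^2$, with \emph{no} factor $\|\theta\|_{\ell^\infty}$. (ii) If $d/2<\beta<1+d/2$: pull $\|\theta\|_{\ell^\infty}^2$ out of the inner sum and use the heat-type estimate $\sum_n|n|^{2-2\beta}e^{-c\delta\rho|n|^2}\lesssim_{d,\beta}(\delta\rho)^{-(d+2-2\beta)/2}$, obtaining $\mathcal K_\rho(\beta)\lesssim_{d,\beta}\|\theta\|_{\ell^\infty}^2(\delta\rho)^{-(d+2-2\beta)/2}R^2$. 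Inserting (i) (resp.\ (ii)) into the bound for $\E\|Y_r\|^p$ and choosing $\alpha$ small enough that $\int_0^r(r-s)^{-2\alpha-(\cdot)}\,\d s$ converges---which forces $p>1/\alpha$ to be taken large but is otherwise harmless---one arrives at
\[
\E\Big[\sup_{t\in[0,T]}\|Z_t\|_{H^{-\beta_A}}^2\Big]\lesssim_{T,\beta_A}\kappa\,\delta^{\beta_A-1}R^2\quad(0<\beta_A\le1),
\]
\[
\E\Big[\sup_{t\in[0,T]}\|Z_t\|_{H^{-\beta_B}}^2\Big]\lesssim_{T,\beta_B}\kappa\,\|\theta\|_{\ell^\infty}^2\,\delta^{-(d+2-2\beta_B)/2}R^2\quad(d/2<\beta_B<1+d/2).
\]

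Finally, interpolate: for $\vartheta\in(0,1)$ and $\beta=(1-\vartheta)\beta_A+\vartheta\beta_B$, the bound $\|Z_t\|_{H^{-\beta}}\le\|Z_t\|_{H^{-\beta_A}}^{1-\vartheta}\|Z_t\|_{H^{-\beta_B}}^{\vartheta}$, after taking $\sup_t$ and applying H\"older in $\omega$, gives
\[
\E\Big[\sup_{t\in[0,T]}\|Z_t\|_{H^{-\beta}}^2\Big]\lesssim\kappa R^2\,\|\theta\|_{\ell^\infty}^{2\vartheta}\,\delta^{-[(1-\vartheta)(1-\beta_A)+\vartheta(d+2-2\beta_B)/2]} .
\]
A short computation using the constraint $\beta=(1-\vartheta)\beta_A+\vartheta\beta_B$ shows that both the exponent of $\|\theta\|_{\ell^\infty}$ and that of $\delta$ depend on $\vartheta$ alone; taking $\vartheta=2(\beta-\eps)/d$ makes them equal to $4(\beta-\eps)/d$ and $\eps-1$ respectively, and one checks that admissible pairs $(\beta_A,\beta_B)$ with $(1-\vartheta)\beta_A+\vartheta\beta_B=\beta$ do exist (for instance $\beta_A=\eps$, $\beta_B=\eps+d/2$ when $\eps\le1$, the remaining range being analogous), which yields the assertion. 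The main difficulty is step (ii): obtaining the sharp power in the heat-type Fourier sum and tracking the admissible ranges of $\beta_A,\beta_B$ in conjunction with the constraint $p\alpha>1$, followed by the bookkeeping in the parameter optimization so that the two exponents land exactly on $\delta^{\eps-1}$ and $\|\theta\|_{\ell^\infty}^{4(\beta-\eps)/d}$; the Hilbert-space / complex-Brownian-motion form of the BDG inequality used in the factorization step is routine but must be set up with some care.
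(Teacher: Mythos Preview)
The paper does not give a self-contained proof of this statement; it cites \cite[Corollary~2.6]{FGL21b} and remarks that the scalar argument there carries over verbatim to the vector-valued setting with the Helmholtz projection. Your proposal is precisely an outline of that cited argument: the factorization method to pass to a $\sup_t$ bound, two endpoint estimates on the It\^o-isometry kernel in negative Sobolev norms (one using only $\|\theta\|_{\ell^2}=1$ together with the pointwise maximum of $x\mapsto x^{1-\beta}e^{-cx}$, the other pulling out $\|\theta\|_{\ell^\infty}^2$ and summing a Gaussian-weighted lattice series), and interpolation between them. So there is nothing substantive to compare against.

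The argument is correct and the parameter bookkeeping at the end is right: with $\vartheta=2(\beta-\eps)/d$ one has $\vartheta\in[0,1]$ under the hypotheses, and the exponent of $\delta$ indeed collapses to $\eps-1$ via the constraint $(1-\vartheta)\beta_A+\vartheta\beta_B=\beta$. Your explicit choice $\beta_A=\eps$, $\beta_B=\eps+d/2$ lies in the admissible ranges for both endpoint estimates precisely when $\eps\in(0,1)$, which covers every application of the result in the paper (in the proof of Lemma~\ref{lem-blow-up-1} one takes $\eps=\alpha/2<1/4$). The residual range $\eps\geq 1$, possible only for $d=3$ and $\beta\in(1,3/2]$, is never used, and in that range the bound $\delta^{\eps-1}$ is anyway non-decreasing in $\delta$ and hence uninformative; your hedge ``the remaining range being analogous'' is adequate here.
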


\section{Proof of Theorem \ref{thm-NSE}} \label{sec-proof-2D-NSE}

Recall the stochastic 2D Navier-Stokes equation \eqref{NSE-Ito} in It\^o form:
  \begin{equation}\label{2D-NSE-Ito}
  \d u + b(u) \,\d t = \big(\nu \Delta u + S_\theta^{(2)}(u) \big)\,\d t + \sqrt{2\kappa} \sum_k \theta_k\, \Pi(\sigma_k \cdot\nabla u) \, \d W^k_t,
  \end{equation}
where the Stratonovich correction term is defined as
  $$ S_\theta^{(2)} (v)= 2\kappa \sum_k \theta_k^2\, \Pi\big[ \sigma_k \cdot\nabla \Pi(\sigma_{-k} \cdot\nabla v) \big]. $$
We remark again that $S_\theta^{(2)} (v)$ reduces to $\kappa \Delta v$ if there were no $\Pi$ on the right-hand side. It is difficult to compute the corrector for general $\theta\in \ell^2$, and hence we consider the special sequence $\{\theta^N \}_N \subset \ell^2$ given in \eqref{theta-N-def}. Though the solution $u$ to the stochastic 2D Navier-Stokes equation \eqref{2D-NSE-Ito} also depend on $N\geq 1$, we omit it for ease of notation.

Inspired by the quantitative estimate \eqref{thm-Ito-corrector.1} in Theorem \ref{thm-Ito-corrector}, and replacing $\theta$ by $\theta^N$ in \eqref{2D-NSE-Ito}, we rewrite it as follows:
  $$\d u + b(u) \,\d t = \Big(\nu+ \frac14 \kappa\Big) \Delta u \,\d t + \Big(S_{\theta^N}^{(2)} (u) -\frac14 \kappa \Delta u\Big)\,\d t + \sqrt{2\kappa} \sum_k \theta^N_k\, \Pi(\sigma_k \cdot\nabla u) \, \d W^k_t .$$
Thanks to \eqref{thm-Ito-corrector.1} and the energy identity \eqref{NSE-energy-balance}, it is clear that $\big\| S_{\theta^N}^{(2)} (u) -\frac14 \kappa \Delta u\big\|_{L^2(0,T; H^{-1-\alpha})} \to 0$ as $N\to \infty$; therefore, this part can be seen as a small perturbation of the equation. In this section we denote by $P_t= e^{(\nu+\kappa/4)t \Delta},\, t\geq 0$. For any $t>s\geq 0$, the above equation has the mild form
  \begin{equation}\label{NSE-mild}
  u_t = P_{t-s} u_s - \int_s^t P_{t-r} b(u_r)\,\d r + \int_s^t P_{t-r}\Big(S_{\theta^N}^{(2)} (u_r) -\frac14 \kappa \Delta u_r\Big)\,\d r + Z_{s,t},
  \end{equation}
where $Z_{s,t}$ denotes the stochastic convolution
  \begin{equation}\label{stoch-convol-NSE}
  Z_{s,t} = \sqrt{2\kappa} \sum_k \theta^N_k\int_s^t P_{t-r} \Pi(\sigma_k \cdot\nabla u_r) \, \d W^k_r.
  \end{equation}

Using \eqref{thm-Ito-corrector.1} and the mild formulation \eqref{NSE-mild}, we can prove the following result which, differently from \cite[Lemma 5.3]{FGL21b}, does not give us a uniform estimate of $\delta$ since $\|u_0 \|_{L^2}^2$ appears; such term is due to the nonlinearity $b(u)$ in \eqref{NSE-mild}.

\begin{lemma}\label{lem-NSE}
There exists $\delta>0 $ such that, for any $n\geq 0$,
  $$\E \| u_{n+1} \|_{L^2}^2 \leq \delta\, \E \| u_n\|_{L^2}^2,$$
where, for some $0<\alpha <1 <\beta\le 3$,
  $$\delta \lesssim_{\alpha, \beta} \frac1\kappa + \frac1{\kappa^2 \nu} \|u_0 \|_{L^2}^2 + \frac1{N^2\nu} + \kappa^{\frac{4\beta- \alpha(2\beta+d)}{4(\alpha+\beta)}} \nu^{-\frac{\beta}{\alpha+\beta}} \Big(\frac1N \Big)^{\frac{2\alpha}{\alpha+\beta}}. $$
In particular, $\delta$ can be as small as we want by first taking big $\kappa>0$, then choosing $N \geq 1$ large enough.
\end{lemma}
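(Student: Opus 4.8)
The plan is to first turn the pointwise-in-time statement into a time-averaged one and then estimate the four contributions of the mild formula. By the energy balance \eqref{NSE-energy-balance}, the map $t\mapsto\|u_t\|_{L^2}$ is $\P$-a.s.\ non-increasing, so $\|u_{n+1}\|_{L^2}^2\le\|u_t\|_{L^2}^2$ for every $t\in[n,n+1]$ and hence $\E\|u_{n+1}\|_{L^2}^2\le\int_n^{n+1}\E\|u_t\|_{L^2}^2\,\d t$; this is the crucial first step, since the estimates of Lemma \ref{lem:heat-kernel} (second assertion) and of Theorem \ref{thm-stoch-convol} are naturally time-averaged over $[n,n+1]$. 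Writing the mild formula \eqref{NSE-mild} on $[n,t]$ and using $\|a+b+c+d\|^2\lesssim\|a\|^2+\cdots+\|d\|^2$, it suffices, after taking $\E$ and integrating $t\in[n,n+1]$, to bound $I_1=\int_n^{n+1}\E\|P_{t-n}u_n\|_{L^2}^2\,\d t$, $I_2=\int_n^{n+1}\E\|\int_n^t P_{t-r}b(u_r)\,\d r\|_{L^2}^2\,\d t$, and $I_3,I_4$ defined analogously with $b(u_r)$ replaced by $S_{\theta^N}^{(2)}(u_r)-\tfrac14\kappa\Delta u_r$ and by the It\^o differential of the noise part (so that $I_4=\int_n^{n+1}\E\|Z_{n,t}\|_{L^2}^2\,\d t$), where $P_t=e^{(\nu+\kappa/4)t\Delta}$ and $\delta:=\nu+\kappa/4\ge\kappa/4$.

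For $I_1$ the spectral gap $4\pi^2$ of $-\Delta$ on zero-mean fields gives $\|P_{t-n}u_n\|_{L^2}\le e^{-4\pi^2\delta(t-n)}\|u_n\|_{L^2}$, hence $I_1\le(8\pi^2\delta)^{-1}\E\|u_n\|_{L^2}^2\lesssim\kappa^{-1}\E\|u_n\|_{L^2}^2$. For $I_2$, the first assertion of Lemma \ref{lem:heat-kernel} with regularity index $-1$ yields $\|\int_n^t P_{t-r}b(u_r)\,\d r\|_{L^2}^2\lesssim\delta^{-1}\int_n^{n+1}\|b(u_r)\|_{H^{-1}}^2\,\d r$; since $b(u)=\Pi\,\nabla\!\cdot(u\otimes u)$ and, by the 2D Ladyzhenskaya inequality, $\|u\|_{L^4}^2\lesssim\|u\|_{L^2}\|\nabla u\|_{L^2}$, one has $\|b(u_r)\|_{H^{-1}}^2\lesssim\|u_r\|_{L^2}^2\|\nabla u_r\|_{L^2}^2$, and then \eqref{NSE-energy-balance} (both the monotonicity $\|u_r\|_{L^2}\le\|u_n\|_{L^2}$ for $r\ge n$ and $2\nu\int_n^{n+1}\|\nabla u_r\|_{L^2}^2\,\d r\le\|u_n\|_{L^2}^2$) together with $\|u_n\|_{L^2}\le\|u_0\|_{L^2}$ gives $I_2\lesssim(\kappa\nu)^{-1}\|u_0\|_{L^2}^2\,\E\|u_n\|_{L^2}^2$. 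This is the only place the initial datum enters, and it does so because of the quadratic nonlinearity. For $I_4$, since \eqref{NSE-energy-balance} implies the energy inequality \eqref{regularity}, Theorem \ref{thm-stoch-convol} applies with $d=2$, $f=u$, $\theta=\theta^N$; recalling $\|\theta^N\|_{\ell^\infty}\sim N^{-1}$ and $\delta\ge\kappa/4$, a direct simplification of the exponents produces exactly the last term of the claimed bound on $\delta$ (with $\E\|f_n\|_{L^2}^2=\E\|u_n\|_{L^2}^2$).

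The term that requires the right choice of norms is $I_3$. Here I use the \emph{second} assertion of Lemma \ref{lem:heat-kernel} with regularity index $-2$, which lands exactly in $L^2$ and carries the decisive factor $\delta^{-2}$: $I_3\lesssim\delta^{-2}\,\E\int_n^{n+1}\|S_{\theta^N}^{(2)}(u_r)-\tfrac14\kappa\Delta u_r\|_{H^{-2}}^2\,\d r$. Applying the 2D estimate of Theorem \ref{thm-Ito-corrector} with its parameter equal to $1$ gives $\|S_{\theta^N}^{(2)}(u_r)-\tfrac14\kappa\Delta u_r\|_{H^{-2}}\le C\kappa N^{-1}\|u_r\|_{H^1}$ (with $C$ independent of $N$); since $\|u_r\|_{H^1}\lesssim\|\nabla u_r\|_{L^2}$ (Poincar\'e, the functions having zero mean), using $\delta\ge\kappa/4$ and once more $2\nu\int_n^{n+1}\|\nabla u_r\|_{L^2}^2\,\d r\le\|u_n\|_{L^2}^2$ makes the $\kappa^2$ cancel and leaves $I_3\lesssim(N^2\nu)^{-1}\,\E\|u_n\|_{L^2}^2$. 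Collecting $I_1$--$I_4$ gives the stated form of $\delta$. For the last assertion, the $I_1$- and $I_2$-terms vanish as $\kappa\to\infty$, while for fixed $\kappa$ both $N$-dependent terms (the $I_3$-term, and the $I_4$-term, whose power of $\kappa$ is positive for every admissible $\beta>1$) vanish as $N\to\infty$, so one first fixes $\kappa$ large and then $N$ large. The main obstacle is precisely the bookkeeping in $I_3$ and $I_4$: one must pass to the time-averaged quantity from the outset, and then match the Sobolev index in Theorem \ref{thm-Ito-corrector} and the power of $\delta$ from Lemma \ref{lem:heat-kernel} (respectively the exponents in Theorem \ref{thm-stoch-convol} and the $N^{-1}$ decay of $\|\theta^N\|_{\ell^\infty}$) so that the $\kappa^2$ created by the corrector, and the growth in $\kappa$ of the stochastic-convolution term, are compensated as required.
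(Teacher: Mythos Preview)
Your proof is correct and follows essentially the same route as the paper's: both pass to the time average via monotonicity of $t\mapsto\|u_t\|_{L^2}$, split the mild formula into four pieces, and treat $I_1$ by the spectral gap, $I_3$ by the second assertion of Lemma~\ref{lem:heat-kernel} combined with Theorem~\ref{thm-Ito-corrector} at its parameter equal to $1$, and $I_4$ by Theorem~\ref{thm-stoch-convol}. The only (inessential) difference is in $I_2$: you use the \emph{first} assertion of Lemma~\ref{lem:heat-kernel} (gaining one derivative, paying $\delta^{-1}$) and land exactly on the claimed bound $\frac{1}{\kappa\nu}\|u_0\|_{L^2}^2$, whereas the paper uses the \emph{second} assertion (gaining two derivatives, paying $\delta^{-2}$) and obtains the slightly sharper $\frac{1}{\kappa^2\nu}\|u_0\|_{L^2}^2$.
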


\begin{proof}
The energy balance \eqref{NSE-energy-balance} implies that, $\P$-a.s. $\|u_t \|_{L^2}$ is decreasing in $t>0$; hence, by the mild formulation \eqref{NSE-mild},
  $$\|u_{n+1} \|_{L^2}^2 \leq \int_n^{n+1} \|u_t \|_{L^2}^2\,\d t \lesssim I_1+ I_2 + I_3 + I_4, $$
where the four terms are defined in an obvious way. We estimate them one by one.

First, we have
  $$I_1 = \int_n^{n+1} \|P_{t-n} u_n \|_{L^2}^2\,\d t \leq \int_n^{n+1} e^{-8\pi^2 t(\nu +\kappa/4)} \|u_n \|_{L^2}^2\,\d t \lesssim \frac1\kappa \|u_n \|_{L^2}^2. $$
Next, we deal with the second term $I_2$ which involves the nonlinearity $b(u)$: by the second estimate in Lemma \ref{lem:heat-kernel},
  $$I_2 = \int_n^{n+1} \bigg\|\int_n^t P_{t-r} b(u_r)\,\d r \bigg\|_{L^2}^2\,\d t \lesssim \frac1{\kappa^2} \int_n^{n+1} \| b(u_r) \|_{H^{-2}}^2 \,\d r. $$
For any divergence free vector field $\phi\in C^1(\T^2, \R^2)$, one has
  $$|\<b(u_r), \phi\>|= |\<u_r \cdot \nabla u_r, \phi\>|= |\<u_r, u_r \cdot \nabla \phi\>| \leq \|u_r \|_{L^4}^2 \|\nabla \phi\|_{L^2} \leq \|u_r \|_{H^{1/2}}^2 \|\phi \|_{H^1}, $$
where the last step is due to the Sobolev embedding $H^{1/2}(\T^2) \subset L^4(\T^2)$. We conclude that
  $$\| b(u_r) \|_{H^{-2}} \lesssim \| b(u_r) \|_{H^{-1}} \lesssim \|u_r \|_{H^{1/2}}^2 \leq \|u_r \|_{L^2} \|u_r \|_{H^1}; $$
therefore,
  $$I_2 \lesssim \frac1{\kappa^2} \int_n^{n+1} \|u_r \|_{L^2}^2 \|u_r \|_{H^1}^2 \,\d r \leq \frac1{\kappa^2} \|u_0 \|_{L^2}^2 \int_n^{n+1} \|u_r \|_{H^1}^2 \,\d r \leq \frac1{\kappa^2 \nu} \|u_0 \|_{L^2}^2 \|u_n \|_{L^2}^2, $$
where in the last step we have used the energy balance \eqref{NSE-energy-balance}.

Now we deal with the third term: using the second inequality in Lemma \ref{lem:heat-kernel},
  $$\aligned
  I_3 &= \int_n^{n+1} \bigg\|\int_n^t P_{t-r}\Big( S_\theta^{(2)} (u_r) -\frac14 \kappa \Delta u_r\Big)\,\d r \bigg\|_{L^2}^2\,\d t\\
  & \lesssim \frac1{\kappa^2} \int_n^{n+1} \bigg\| S_\theta^{(2)} (u_r) -\frac14 \kappa \Delta u_r \bigg\|_{H^{-2}}^2\,\d r .
  \endaligned $$
Applying the first assertion in Theorem \ref{thm-Ito-corrector} and \eqref{NSE-energy-balance} leads to
  $$\aligned
  I_3 &\lesssim \frac1{\kappa^2} \int_n^{n+1} C \frac{\kappa^2}{N^2} \|u_r \|_{H^1}^2 \,\d r\lesssim \frac1{N^2\nu} \|u_n \|_{L^2}^2 .
  \endaligned $$

Finally, the last term $I_4$ involving the stochastic convolution $Z_{n,t}$ can be treated by applying Theorem \ref{thm-stoch-convol} with $\delta= \nu+\kappa/4,\, \|\theta^N \|_{\ell^\infty} \sim 1/N$ (since $d=2$):
  $$ \E I_4 =\int_{n}^{n+1} \E \|Z_{n,t} \|_{L^2}^2 \,\d t \lesssim \kappa^{\frac{4\beta- \alpha(2\beta+d)}{4(\alpha+\beta)}} \nu^{-\frac{\beta}{\alpha+\beta}} \Big(\frac1N \Big)^{\frac{2\alpha}{\alpha+\beta}} \E\|u_n \|_{L^2}^2. $$
Summarizing the estimates on $I_i\, (i=1,2,3,4)$ we complete the proof.
\end{proof}

\begin{remark}
The estimate on $I_2$ can be improved as below. For any divergence free smooth field $\phi$ and any $q>2$, applying H\"older's inequality with exponents $\frac1q + \frac1q + \frac{q-2}q=1$ yields
  $$|\<b(u_r), \phi\>|= |\<u_r, u_r \cdot \nabla \phi\>| \leq \|u_r \|_{L^q}^2 \|\nabla \phi\|_{L^{q/(q-2)}} \lesssim \|u_r \|_{H^{(q-2)/q}}^2 \|\nabla \phi\|_{H^1}, $$
where we have used Sobolev embeddings in dimension 2. Let $\eta=q-2$; this implies
  $$\|b(u_r)\|_{H^{-2}} \lesssim \|u_r \|_{H^{\eta/(2+\eta)}}^2 \leq \|u_r \|_{L^2}^{4/(2+\eta)} \|u_r \|_{H^1}^{2\eta/(2+\eta)}  $$
by interpolation. As a result,
  $$I_2 \lesssim \frac1{\kappa^2} \int_n^{n+1} \|u_r \|_{L^2}^{8/(2+\eta)} \|u_r \|_{H^1}^{4\eta/(2+\eta)} \,\d r \leq \frac1{\kappa^2} \|u_n \|_{L^2}^{8/(2+\eta)} \int_n^{n+1} \|u_r \|_{H^1}^{4\eta/(2+\eta)} \,\d r, $$
where the last step is due to the non-increasing property of $r\mapsto \|u_r \|_{L^2}$. For $\eta<2$, by Jensen's inequality and the energy balance \eqref{NSE-energy-balance},
  $$\aligned
  I_2 &\lesssim \frac{1}{\kappa^2} \|u_n \|_{L^2}^{8/(2+\eta)} \bigg(\int_n^{n+1} \|u_r \|_{H^1}^2 \,\d r \bigg)^{2\eta/(2+\eta)} \\
  & \leq \frac{1}{\kappa^2} \|u_n \|_{L^2}^{8/(2+\eta)} \bigg(\frac1\nu \|u_n \|_{L^2}^2 \bigg)^{2\eta/(2+\eta)} \\
  & = \frac{1}{\kappa^2 \nu^{2\eta/(2+\eta)}} \|u_n \|_{L^2}^4 \leq \frac{\|u_0 \|_{L^2}^2}{\kappa^2 \nu^{2\eta/(2+\eta)}}  \|u_n \|_{L^2}^2.
  \endaligned $$
When the viscosity coefficient $\nu$ is small,  this estimate improves the above one if we take $\eta>0$ very small.
\end{remark}

With the above estimates in mind, we can follow the ideas of the proof of \cite[Theorem 1.]{FGL21b} to show Theorem \ref{thm-NSE}.

\begin{proof}[Proof of Theorem \ref{thm-NSE}]
Lemma \ref{lem-NSE} implies that there exists a small $\delta \in (0,1)$ such that for any $n\geq 1$,
  $$\E \|u_n \|_{L^2}^2 \leq \delta\, \E \|u_{n-1} \|_{L^2}^2 \leq \cdots \leq \delta^n \|u_0 \|_{L^2}^2. $$
Recalling that $t\to \|u_t \|_{L^2}$ is $\P$-a.s. decreasing, we have
  $$\E \bigg(\sup_{t\in [n,n+1]} \|u_t \|_{L^2}^2 \bigg) = \E \|u_n \|_{L^2}^2 \leq \delta^n \|u_0 \|_{L^2}^2 = e^{-2\lambda' n} \|u_0 \|_{L^2}^2, $$
where $\lambda'= -\frac12 \log \delta >0$. By Lemma \ref{lem-NSE}, we can choose a suitable pair $(\kappa,\theta^N)$ such that $\lambda'> \lambda(1+p/2)$, where $\lambda>0$ and $p\geq 1$ are parameters in the statement of Theorem \ref{thm-NSE}.

Now for any $n\geq 1$, we define
  $$A_n = \bigg\{\omega\in \Omega: \sup_{t\in [n,n+1]} \|u_t(\omega) \|_{L^2} > e^{-\lambda n} \|u_0 \|_{L^2} \bigg\}.$$
Then by Chebyshev's inequality,
  $$\sum_n \P(A_n) \leq \sum_n \frac{e^{2\lambda n}}{\|u_0 \|_{L^2}^2} \E\bigg(\sup_{t\in [n,n+1]} \|u_t \|_{L^2}^2 \bigg) \leq \sum_n e^{2(\lambda -\lambda')n} <+\infty, $$
therefore, by Borel-Cantelli lemma, for $\P$-a.s. $\omega\in \Omega$, there exists a big $N(\omega) \geq 1$ such that
  $$\sup_{t\in [n,n+1]} \|u_t(\omega) \|_{L^2} \leq e^{-\lambda n} \|u_0 \|_{L^2} \quad \forall\, n> N(\omega).$$
For $0\leq n\leq N(\omega)$, we have
  $$\sup_{t\in [n,n+1]} \|u_t(\omega) \|_{L^2} \leq \|u_n(\omega) \|_{L^2} = e^{\lambda n} e^{-\lambda n} \|u_n(\omega) \|_{L^2} \leq e^{\lambda N(\omega)} e^{-\lambda n} \|u_0 \|_{L^2}. $$
Thus, if we take $C(\omega)= e^{\lambda (1+N(\omega))} $, then it is easy to show that, $\P$-a.s. for all $t\geq 0$, $\|u_t(\omega) \|_{L^2} \leq C(\omega) e^{-\lambda t} \|u_0 \|_{L^2}$.

It remains to estimate the $p$-th moment of the random variable $C(\omega)$; to this end, we need to estimate the tail probability $\P(\{N(\omega) \geq k\})$. Note that $N(\omega)$ may be defined as the largest integer $n$ such that $\sup_{t\in [n,n+1]} \|u_t(\omega) \|_{L^2} > e^{-\lambda n} \|u_0 \|_{L^2}$; hence
  $$\{\omega\in \Omega: N(\omega) \geq k\} = \bigcup_{n=k}^\infty A_n. $$
Then, we have
  $$\P(\{N(\omega) \geq k\}) \leq \sum_{n=k}^\infty \P(A_n) \leq \sum_{n=k}^\infty e^{2(\lambda -\lambda')n} = \frac{e^{2(\lambda -\lambda')k}}{1- e^{2(\lambda -\lambda')}}. $$
As a result,
  $$\E e^{\lambda p N(\omega)} = \sum_{n=0}^\infty e^{\lambda p k} \P(\{N(\omega) = k\}) \leq \frac1{1- e^{2(\lambda -\lambda')}} \sum_{n=0}^\infty e^{\lambda p k} e^{2(\lambda -\lambda')k} < +\infty ,$$
where the last step is due to the choice of $\lambda'$. Therefore $C(\omega)$ has finite $p$-th moment.
\end{proof}

\section{Proof of Theorem \ref{thm-3D-SNSE}} \label{sec-proof-3D-NSE}

This section consists of two parts: we provide in Section \ref{subs-3D-NSE-preliminary} some preliminary estimates on the limit equation \eqref{3D-NSE-limit} and on the approximating equations \eqref{3D-NSE-cut-off} with cut-off, together with stability estimates on the nonlinearity (see Propositions \ref{prop-nonlin-3D} and \ref{prop-nonlin-3D-1}); the proof of Theorem \ref{thm-3D-SNSE} is given in Section \ref{subs-3D-NSE-proof}, where we estimate the blow-up probability by using the mild formulations of both approximating equations and limit equation, as well as the key ingredient \eqref{thm-Ito-corrector.2}.

\subsection{Some preliminary results}\label{subs-3D-NSE-preliminary}

Recall the stochastic 3D Navier-Stokes equations in vorticity form:
  \begin{equation}\label{SNSE-vort-Ito-1}
  \d \xi + \L_u \xi\,\d t = \big[\Delta \xi + S_\theta^{(3)}(\xi)\big] \,\d t  +\sqrt{\frac32 \kappa} \sum_{k,i} \theta_k \Pi(\sigma_{k,i}\cdot \nabla \xi) \, \d W^{k,i}_t ,
  \end{equation}
where the Stratonovich-It\^o correction term is
  \begin{equation}\label{Stra-Ito-corrector-3D}
  S_\theta^{(3)}(\xi)= \frac32 \kappa \sum_{k,i} \theta_k^2\, \Pi\big[ \sigma_{k,i}\cdot \nabla \Pi (\sigma_{-k,i} \cdot\nabla \xi) \big].
  \end{equation}
Thanks to the second claim in Theorem \ref{thm-Ito-corrector}, taking $\theta=\theta^N$ in \eqref{SNSE-vort-Ito-1} and letting $N\to \infty$, if we can show that the martingale part vanishes, then the limit equation would be the vorticity form of deterministic 3D Navier-Stokes equations
  \begin{equation}\label{3D-NSE-limit}
  \partial_t \xi + \L_u \xi = \Big(1+ \frac35 \kappa \Big) \Delta \xi.
  \end{equation}
We first prove the following basic estimate on the solutions to \eqref{3D-NSE-limit}; in this section $H \subset L^2(\T^3,\R^3)$ is the subspace of divergence free vector fields on $\T^3$.

\begin{lemma}\label{lem-3D-NSE-limit}
There exists $C_1>0$ with the following property: given $\xi_0\in H$, if $\kappa\geq C_1 \|\xi_0 \|_{L^2}^4 +1$, then  \eqref{3D-NSE-limit} admits a unique global solution $\{\xi_t \}_{t\geq 0}$ satisfying
  \begin{equation}\label{lem-3D-NSE-limit.1}
  \max \bigg\{\sup_{t\geq 0} \big( e^{2t} \|\xi_t \|_{L^2}^2 \big) ,\, \int_0^\infty \|\nabla\xi_t \|_{L^2}^2\,\d t \bigg\} \leq \|\xi_0 \|_{L^2}^2.
  \end{equation}
\end{lemma}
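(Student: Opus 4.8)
The plan is to recognize \eqref{3D-NSE-limit} as the \emph{deterministic} 3D Navier-Stokes equation in vorticity form on $\T^3$ with the (large) viscosity $\nu_\kappa:=1+\tfrac35\kappa$, where $u=K\ast\xi$ is recovered from $\xi$ by Biot-Savart, and to run the classical subcritical (small-data) global existence argument while tracking how the estimates depend on $\nu_\kappa$ (hence on $\kappa$). First I would invoke the standard local theory for this equation --- identical to the one recalled for \eqref{3D-NSE-vort} in Section \ref{subs-3D-NSE-main-result}, with $1$ replaced by $\nu_\kappa$: for $\xi_0\in H$ there is a unique maximal solution $\xi\in C([0,T_0);H)\cap L^2_{\mathrm{loc}}([0,T_0);H\cap H^1)$, smooth for $t>0$, and $T_0<\infty$ would force $\limsup_{t\uparrow T_0}\|\xi_t\|_{L^2}=\infty$.

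Since the solution is smooth for $t>0$, I would test \eqref{3D-NSE-limit} with $\xi$; using $\nabla\cdot u=0$, the transport part of $\L_u\xi$ integrates to zero and one is left with
\[
\tfrac12\tfrac{\d}{\d t}\|\xi_t\|_{L^2}^2=\langle \xi_t\cdot\nabla u_t,\xi_t\rangle-\nu_\kappa\|\nabla\xi_t\|_{L^2}^2 .
\]
The stretching term is bounded by $\|\xi\|_{L^4}^2\|\nabla u\|_{L^2}$; the Biot-Savart relation on the torus gives $\|\nabla u\|_{L^2}\lesssim\|\xi\|_{L^2}$, and the interpolation inequality $\|\xi\|_{L^4}\lesssim\|\xi\|_{L^2}^{1/4}\|\nabla\xi\|_{L^2}^{3/4}$ in dimension three yields $|\langle\xi\cdot\nabla u,\xi\rangle|\lesssim\|\xi\|_{L^2}^{3/2}\|\nabla\xi\|_{L^2}^{3/2}$. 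Applying Young's inequality so as to absorb a small fraction of $\nu_\kappa\|\nabla\xi\|_{L^2}^2$ then produces the $\kappa$-dependent analogue of \eqref{3D-NSE-energy-identity},
\[
\tfrac{\d}{\d t}\|\xi_t\|_{L^2}^2\leq-\nu_\kappa\|\nabla\xi_t\|_{L^2}^2+\frac{C_0}{\nu_\kappa^{3}}\|\xi_t\|_{L^2}^6 ,
\]
where $C_0$ depends only on the Sobolev/interpolation constants; the weight $\nu_\kappa^{-3}$ paired with the sextic term is precisely what will produce the fourth power of $\|\xi_0\|_{L^2}$ in the hypothesis.

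The heart of the proof is a continuity (bootstrap) argument. Put $y(t)=\|\xi_t\|_{L^2}^2$ and let $T^\ast$ be the largest time below $T_0$ up to which $y(t)\le y(0)$ holds. On $[0,T^\ast)$ one has $\|\xi_t\|_{L^2}^6=y(t)^3\le\|\xi_0\|_{L^2}^4\,y(t)$, so the energy estimate together with the Poincar\'e inequality $4\pi^2 y\le\|\nabla\xi\|_{L^2}^2$ gives
\[
y'(t)\le-\Big(\nu_\kappa-\tfrac{C_0}{4\pi^2\nu_\kappa^{3}}\|\xi_0\|_{L^2}^4\Big)\|\nabla\xi_t\|_{L^2}^2 .
\]
Since $\kappa\ge C_1\|\xi_0\|_{L^2}^4+1$ gives $\|\xi_0\|_{L^2}^4\le(\kappa-1)/C_1<\tfrac53\nu_\kappa/C_1$ and $\nu_\kappa\ge\tfrac85$, choosing $C_1$ a sufficiently large multiple of $C_0$ makes $\tfrac{C_0}{\nu_\kappa^{3}}\|\xi_0\|_{L^2}^4$ so small that the bracket above is at least $1$, whence on $[0,T^\ast)$
\[
y'(t)\le-\|\nabla\xi_t\|_{L^2}^2\le-4\pi^2\,y(t).
\]
In particular $y$ is strictly decreasing, so the bound $y\le y(0)$ propagates up to $T_0$, i.e.\ $T^\ast=T_0$; then $\|\xi_t\|_{L^2}$ remains bounded, and the blow-up criterion excludes $T_0<\infty$, so the solution is global. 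Finally, Gr\"onwall applied to $y'\le-4\pi^2 y$ gives $\|\xi_t\|_{L^2}\le e^{-2\pi^2 t}\|\xi_0\|_{L^2}\le e^{-t}\|\xi_0\|_{L^2}$ (hence $\sup_{t\ge0}\|\xi_t\|_{L^2}^2\le\|\xi_0\|_{L^2}^2$), and integrating $y'\le-\|\nabla\xi_t\|_{L^2}^2$ over $[0,\infty)$ yields $\int_0^\infty\|\nabla\xi_t\|_{L^2}^2\,\d t\le\|\xi_0\|_{L^2}^2$; together these give \eqref{lem-3D-NSE-limit.1} and the claimed exponential decay.

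I do not expect a genuine obstacle: this is the standard subcritical global existence scheme for 3D Navier-Stokes in vorticity form, and the only real work is bookkeeping --- getting the powers of $\nu_\kappa$ correct in Young's inequality (which is what forces the condition $\kappa\gtrsim\|\xi_0\|_{L^2}^4$) and pinning down the numerical size of $C_1$ so that all three conclusions come out with the clean constant $1$. A minor point is to justify the energy identity for the $L^2$-solution, which is immediate from parabolic smoothing for $t>0$ (or can be carried out on Galerkin truncations and passed to the limit).
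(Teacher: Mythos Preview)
Your proposal is correct and follows essentially the same route as the paper: derive the $L^2$ energy identity, bound the stretching term by $\|\xi\|_{L^2}^{3/2}\|\nabla\xi\|_{L^2}^{3/2}$, apply Young's inequality, and run a continuity argument using Poincar\'e to close the loop and get exponential decay. The only cosmetic differences are that the paper splits the stretching term via $\|\xi\|_{L^3}^3$ (H\"older with exponents $3,3,3$) rather than your $\|\xi\|_{L^4}^2\|\nabla u\|_{L^2}$, and it applies Young's inequality with a \emph{$\kappa$-independent} weight (absorbing only $\tfrac12\|\nabla\xi\|_{L^2}^2$ and then splitting $2\kappa_1=2+\tfrac65\kappa$), whereas you absorb a fraction of the full $\nu_\kappa\|\nabla\xi\|_{L^2}^2$ and pick up the factor $\nu_\kappa^{-3}$ in front of the sextic term; both lead to a condition of the form $\kappa\gtrsim\|\xi_0\|_{L^2}^4$ and the same conclusions.
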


\begin{proof}
The estimates below are close in spirit to those in \cite[Lemma 4.3]{FGL21b}. Let $\kappa_1= 1+ \frac35 \kappa$; using \eqref{3D-NSE-limit} it holds
  $$\frac{\d}{\d t} \|\xi \|_{L^2}^2 = 2\<\xi, -\L_u\xi + \kappa_1 \Delta\xi \> = 2\<\xi, \xi\cdot \nabla u\>- 2\kappa_1 \|\nabla \xi\|_{L^2}^2, $$
where we have used $\<\xi, u\cdot\nabla\xi \>=0$ since $u$ is divergence free. By H\"older's inequality and Sobolev embedding in 3D, we obtain
  $$|\<\xi, \xi\cdot \nabla u\>| \leq \|\xi \|_{L^3}^2 \|\nabla u\|_{L^3} \lesssim \|\xi \|_{L^3}^3 \lesssim \|\xi \|_{H^{1/2}}^3 \lesssim \|\xi \|_{L^2}^{3/2} \|\nabla \xi \|_{L^2}^{3/2}, $$
where the last step is due to interpolation of Sobolev norms. Applying Young's inequality with exponents $\frac14 + \frac34 =1$ leads to
  $$|\<\xi, \xi\cdot \nabla u\>| \leq c_0 \|\xi \|_{L^2}^6 + \frac12 \|\nabla \xi \|_{L^2}^2. $$
Hence, it holds
  $$\frac{\d}{\d t} \|\xi \|_{L^2}^2 + \|\nabla \xi\|_{L^2}^2 \leq 2c_0 \|\xi \|_{L^2}^6 - \frac65 \kappa \|\nabla \xi\|_{L^2}^2 .$$
Recall the Poincar\'e inequality $4\pi^2 \|\xi \|_{L^2}^2 \leq \|\nabla \xi\|_{L^2}^2$; we have
  \begin{equation}\label{lem-3D-NSE-limit.2}
  \aligned
  \frac{\d}{\d t} \|\xi \|_{L^2}^2 +\|\nabla \xi\|_{L^2}^2 &\leq 2c_0 \|\xi \|_{L^2}^6 -\frac{24\pi^2}5 \kappa \| \xi\|_{L^2}^2 \\
  &\leq -\Big(\frac{24\pi^2}5 \kappa - 2c_0 \|\xi \|_{L^2}^4 \Big) \|\xi \|_{L^2}^2 .
  \endaligned
  \end{equation}

Now we claim that the constant
  $$C_1= \frac5{12\pi^2} c_0 $$
fulfills our requirement. Indeed, if $\kappa\geq C_1 \|\xi_0 \|_{L^2}^4 +1$, then
  $$\frac{24\pi^2}5 \kappa - 2c_0 \|\xi_0 \|_{L^2}^4 \geq \frac{24\pi^2}5 \Big(\frac5{12\pi^2} c_0 \|\xi_0 \|_{L^2}^4 +1\Big) - 2c_0 \|\xi_0 \|_{L^2}^4 = \frac{24\pi^2}5 >1; $$
combining this with \eqref{lem-3D-NSE-limit.2} we deduce that $\frac{\d}{\d t} \|\xi_t \|_{L^2}^2<0 $ for $t=0$, thus the energy  $\|\xi _t\|_{L^2}^2$ is decreasing for small time, and also $\frac{24\pi^2}5 \kappa - 2c_0 \|\xi_t \|_{L^2}^4 >1$ will be true for all later times. As a result, we have
  $$\frac{\d}{\d t} \|\xi \|_{L^2}^2 \leq - \|\xi \|_{L^2}^2$$
implying $\|\xi_t \|_{L^2} \leq e^{-t} \|\xi_0 \|_{L^2}$ for all $t\geq 0$. We also deduce from \eqref{lem-3D-NSE-limit.2} that
  $$\int_0^\infty \|\nabla \xi_t \|_{L^2}^2\,\d t \leq \|\xi_0 \|_{L^2}^2. $$
We complete the proof.
\end{proof}

Due to the nonlinear term in \eqref{SNSE-vort-Ito-1}, solutions might blow up in finite time; thus we introduce a cut-off in this part. Let $\alpha\in (0,1/2)$; we consider the following stochastic 3D Navier-Stokes equations with cut-off:
  \begin{equation}\label{3D-NSE-cut-off}
  \aligned
  \d\xi + g_{\alpha, R}(\xi) \L_u\xi \,\d t &= \Delta\xi\,\d t + \sqrt{\frac32 \kappa} \sum_{k,i} \theta_k \Pi( \sigma_{k,i} \cdot\nabla \xi)\circ \d W^{k,i} \\
  &= \big[\Delta\xi +S_\theta^{(3)}(\xi) \big]\,\d t + \sqrt{\frac32 \kappa} \sum_{k,i} \theta_k \Pi( \sigma_{k,i} \cdot\nabla \xi)\, \d W^{k,i},
  \endaligned
  \end{equation}
where $g_{\alpha, R}(\xi)= g_R(\|\xi \|_{H^{-\alpha}})$ is a cut-off function, with $g_R\in C([0,\infty); [0,1])$ satisfying $g_R\equiv 1$ on $[0,R]$ and $g_R\equiv 0$ on $[R+1,\infty)$, having Lipschitz constant 1. Note that if $\xi$ solves \eqref{3D-NSE-cut-off} on some interval $[0,T]$ and $\sup_{t\in [0,T]} \|\xi_t \|_{H^{-\alpha}} \leq R$, then it also solves the original stochastic 3D Navier-Stokes equations \eqref{SNSE-vort-Ito-1} without cut-off. We would like to stress that the cut-off is carefully designed according to the nonlinearities of partial differential equations (PDEs); here, for $\alpha$ restricted to $(0,1/2)$, the cut-off based on $H^{-\alpha}$-norm of solution works for our purpose, also in the deterministic case (i.e. the first equation in \eqref{3D-NSE-cut-off} without noise). For other nonlinear PDEs we need different choices of cut-off. Thanks to the cut-off, global existence of solutions holds for \eqref{3D-NSE-cut-off} for any initial data $\xi_0 \in H$; this can be proved by first showing the existence of weak solutions and pathwise uniqueness, then applying the Yamada-Watanabe theorem, see e.g. \cite[Theorem 1.3]{FlaLuo21} which makes use of an estimate similar to Lemma \ref{lem-3D-NSE-cut-off} below. From this estimate we see that, choosing a bigger threshold $R$ if necessary, $\|\xi \|_{L^2}$ blows up only if $\|\xi \|_{H^{-\alpha}}$ does so.

\begin{lemma}\label{lem-3D-NSE-cut-off}
Let $\alpha\in(0,1/2)$ and $R>0$ be fixed. There exists a constant $C_{\alpha}$ such that the unique solution $\xi$ to \eqref{3D-NSE-cut-off} satisfies
  \begin{equation}\label{lem-3D-NSE-cut-off.1}
  \sup_{t\in[0,T]} \| \xi_t \|_{L^2}^2 + \int_0^T \|\nabla \xi_s\|_{L^2}^2\,\d s \leq \| \xi_0 \|_{L^2}^2 + C_\alpha (R+1)^{6/(1-2\alpha)}\, T \quad\P \mbox{-a.s.}
  \end{equation}
\end{lemma}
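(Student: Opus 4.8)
The plan is to run a standard energy estimate on equation \eqref{3D-NSE-cut-off}, using the cut-off factor $g_{\alpha,R}(\xi)$ to control the vortex-stretching term. First I would note that, exactly as in the proof of Lemma \ref{lem-3D-NSE-limit}, the Stratonovich noise contributes nothing to $\frac{\d}{\d t}\|\xi\|_{L^2}^2$: indeed, the It\^o correction term $S_\theta^{(3)}(\xi)$ together with the It\^o-correction coming from the quadratic variation of the martingale exactly cancel (this is the content of the equivalence of the two lines in \eqref{3D-NSE-cut-off}, and of the remark that the noise vanishes in energy-type computations). Concretely, applying It\^o's formula to $\|\xi_t\|_{L^2}^2$ gives
\[
\d\|\xi_t\|_{L^2}^2 + 2g_{\alpha,R}(\xi_t)\langle \xi_t, \L_{u_t}\xi_t\rangle\,\d t = -2\|\nabla\xi_t\|_{L^2}^2\,\d t + \d M_t,
\]
where $M_t$ is a local martingale whose drift-free nature uses $\langle \Pi(\sigma_{k,i}\cdot\nabla\xi),\xi\rangle = \langle \sigma_{k,i}\cdot\nabla\xi,\xi\rangle = \frac12\langle \sigma_{k,i}\cdot\nabla|\xi|^2,1\rangle = 0$ together with the matching of $S_\theta^{(3)}$ against $\frac32\kappa\sum_{k,i}\theta_k^2\|\Pi(\sigma_{-k,i}\cdot\nabla\xi)\|_{L^2}^2$. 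Since $\langle\xi,u\cdot\nabla\xi\rangle=0$ as well, the only surviving nonlinear contribution is $-2g_{\alpha,R}(\xi)\langle\xi,\xi\cdot\nabla u\rangle$.

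Next I would estimate the stretching term. Here is where the cut-off is used: $g_{\alpha,R}(\xi)$ is nonzero only when $\|\xi\|_{H^{-\alpha}}\leq R+1$, so it suffices to bound $|\langle\xi,\xi\cdot\nabla u\rangle|$ in terms of $\|\xi\|_{H^{-\alpha}}$ and $\|\nabla\xi\|_{L^2}$, absorbing the latter into the dissipation. Using $|\langle\xi,\xi\cdot\nabla u\rangle| = |\langle \xi\otimes\xi, \nabla u\rangle| \lesssim \|\xi\|_{L^3}^2\|\nabla u\|_{L^3} \lesssim \|\xi\|_{L^3}^3$ (since $\|\nabla u\|_{L^3}\lesssim\|\xi\|_{L^3}$ by the Biot-Savart law and Calder\'on-Zygmund), I would then interpolate $\|\xi\|_{L^3}\lesssim\|\xi\|_{H^{1/2}}$ between $H^{-\alpha}$ and $H^1$: writing $\tfrac12 = (1-\vartheta)(-\alpha)+\vartheta\cdot 1$, i.e. $\vartheta = \frac{\alpha+1/2}{\alpha+1} = \frac{2\alpha+1}{2\alpha+2}$, gives $\|\xi\|_{H^{1/2}}\lesssim\|\xi\|_{H^{-\alpha}}^{1-\vartheta}\|\xi\|_{H^1}^{\vartheta}$. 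Hence on the support of the cut-off,
\[
g_{\alpha,R}(\xi)|\langle\xi,\xi\cdot\nabla u\rangle| \lesssim (R+1)^{3(1-\vartheta)}\|\xi\|_{H^1}^{3\vartheta},
\]
and since $3\vartheta = \frac{3(2\alpha+1)}{2\alpha+2} < 2$ for $\alpha<1$ (one checks $3(2\alpha+1)<2(2\alpha+2)\iff 2\alpha<1$... actually $\iff \alpha<1/2$; for $\alpha\in[1/2,1)$ one instead uses that the exponent is still $<2$ only when $\alpha<1/2$, so I would rather interpolate $\|\xi\|_{L^3}$ so as to land below the dissipation: use $\|\xi\|_{L^3}^3\lesssim\|\xi\|_{H^{-\alpha}}^{3-2\vartheta'}\|\xi\|_{\dot H^1}^{2\vartheta'}$ with the right $\vartheta'<1$, which works for all $\alpha\in(0,1)$), Young's inequality with the pair $(\frac{1}{\vartheta'}, \frac{1}{1-\vartheta'})$ yields
\[
g_{\alpha,R}(\xi)|\langle\xi,\xi\cdot\nabla u\rangle| \leq \tfrac12\|\nabla\xi\|_{L^2}^2 + C_\alpha (R+1)^{(3-2\vartheta')/(1-\vartheta')}.
\]
The bookkeeping of exponents is the one place that needs care; the claimed exponent $2/(3+2\alpha)$ in \eqref{lem-3D-NSE-cut-off.1} should fall out of this computation, and I would adjust the interpolation to hit it exactly.

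Finally I would integrate in time and take care of the martingale. Plugging the bound back in gives
\[
\d\|\xi_t\|_{L^2}^2 + \|\nabla\xi_t\|_{L^2}^2\,\d t \leq C_\alpha(R+1)^{2/(3+2\alpha)}\,\d t + \d M_t.
\]
Integrating from $0$ to $t\wedge\tau_n$ for a localizing sequence $\tau_n$, taking expectations kills the martingale; then a standard Burkholder-Davis-Gundy argument on $\E\sup_{t\in[0,T]}|M_{t\wedge\tau_n}|$, combined with the already-derived bound on $\E\int_0^T\|\nabla\xi\|_{L^2}^2$ and Gronwall/absorption, upgrades this to the pathwise statement by a further localization and monotone-convergence passage $n\to\infty$; alternatively, since the drift bound is deterministic and the martingale has a.s. continuous paths, one can argue pathwise directly that $\sup_{t\in[0,T]}\|\xi_t\|_{L^2}^2 + \int_0^T\|\nabla\xi_s\|_{L^2}^2\,\d s \leq \|\xi_0\|_{L^2}^2 + C_\alpha(R+1)^{2/(3+2\alpha)}T + (\text{martingale oscillation})$, and then note that in fact the supremum-form of the inequality as stated follows because the martingale term, after moving the dissipation integral to the left, has mean zero and the inequality is claimed only $\P$-a.s. in the integrated pathwise sense that the cited reference \cite[Theorem 1.3]{FlaLuo21} uses. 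The main obstacle is not any single hard estimate but getting the interpolation exponents to produce precisely $(R+1)^{2/(3+2\alpha)}$ uniformly for all $\alpha\in(0,1)$, and making sure the $\tfrac12\|\nabla\xi\|_{L^2}^2$ absorption is legitimate for the full range of $\alpha$ rather than just $\alpha<1/2$.
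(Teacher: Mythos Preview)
Your approach is essentially the paper's: derive the energy balance from the Stratonovich structure, bound the stretching term by $\|\xi\|_{L^3}^3\lesssim\|\xi\|_{H^{1/2}}^3$, interpolate $H^{1/2}$ between $H^{-\alpha}$ and $H^1$, and absorb via Young's inequality. Two points deserve comment.

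First, you overcomplicate the stochastic part. You correctly compute that each martingale integrand $\langle\xi,\Pi(\sigma_{k,i}\cdot\nabla\xi)\rangle=\langle\xi,\sigma_{k,i}\cdot\nabla\xi\rangle=0$; but this means $M_t\equiv 0$, not merely that $M_t$ is a local martingale. Together with the cancellation you note between $2\langle\xi,S_\theta^{(3)}(\xi)\rangle$ and the It\^o correction, the energy balance
\[
\d\|\xi_t\|_{L^2}^2 = -2\|\nabla\xi_t\|_{L^2}^2\,\d t + 2g_{\alpha,R}(\xi_t)\,\langle\xi_t,\xi_t\cdot\nabla u_t\rangle\,\d t
\]
is a \emph{deterministic} pathwise identity. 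After bounding the stretching term and integrating in $t$ you obtain \eqref{lem-3D-NSE-cut-off.1} directly, $\P$-a.s., with no Burkholder--Davis--Gundy inequality, no localizing sequence, no expectations. This is exactly how the paper proceeds; your entire final paragraph about ``martingale oscillation'' and upgrading via BDG is unnecessary.

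Second, your hesitation about the range of $\alpha$ is well founded and is not a gap in your argument. With $\vartheta=\frac{1+2\alpha}{2(1+\alpha)}$ one has $3\vartheta<2$ precisely when $\alpha<\tfrac12$, so the absorption of $\|\nabla\xi\|_{L^2}^{3\vartheta}$ into $\tfrac12\|\nabla\xi\|_{L^2}^2$ via Young works only there. The paper uses the same interpolation and is therefore subject to the same restriction, despite the statement allowing $\alpha\in(0,1)$; this is harmless because the lemma is only ever applied with $\alpha\in(0,\tfrac12)$ (see Theorem~\ref{thm-3D-SNSE} and the opening of Section~\ref{subs-3D-NSE-proof}). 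You need not search for an alternative interpolation covering $\alpha\in[\tfrac12,1)$.
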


\begin{proof}
The proof is similar to that of Lemma \ref{lem-3D-NSE-limit}; here we give a sketch.

Using the Stratonovich structure and the divergence free property of the noise, we obtain the energy balance
  $$\d \|\xi \|_{L^2}^2 = 2\<\xi, -g_{\alpha,R}(\xi) \L_u\xi + \Delta\xi \>\,\d t = 2g_{\alpha,R}(\xi) \<\xi, \xi\cdot \nabla u\>\,\d t- 2 \|\nabla \xi\|_{L^2}^2\,\d t.$$
We estimate the nonlinear term as follows:
  $$\aligned
  g_{\alpha,R}(\xi) |\<\xi, \xi\cdot \nabla u\>| &\lesssim g_{\alpha,R}(\xi) \|\xi \|_{L^3}^3 \lesssim g_{\alpha,R}(\xi) \|\xi \|_{H^{1/2}}^3 \\
  &\leq g_{\alpha,R}(\xi) \|\nabla\xi \|_{L^2}^{3(1+2\alpha)/2(1+\alpha)} \|\xi \|_{H^{-\alpha}}^{3/2(1+\alpha)} \\
  &\leq \frac12 \|\nabla\xi \|_{L^2}^2 + C_\alpha g_{\alpha,R}(\xi) \|\xi \|_{H^{-\alpha}}^{6/(1-2\alpha)},
  \endaligned $$
where the last step follows from Young's inequality with exponents (which is allowed due to $\alpha\in (0,1/2)$)
  $$\frac{3(1+2\alpha)}{4(1+\alpha)} + \frac{1-2\alpha}{4(1+\alpha)}= 1. $$
The properties of $g_{\alpha,R}$ leads to
  $$\d \|\xi \|_{L^2}^2 + \|\nabla \xi\|_{L^2}^2\,\d t \leq C_\alpha (R+1)^{6/(1-2\alpha)} \,\d t. $$
This immediately implies the desired result.
\end{proof}

Note that the above estimate is independent of the noise parameters $\kappa>0$ and $\theta\in \ell^2(\Z^3_0)$.

\begin{remark}\label{rem-3D-blow-up}
If we consider equation \eqref{3D-NSE-limit} with cut-off $g_{\alpha,R}(\xi)$ in front of the nonlinear term, then similarly $\|\xi \|_{L^2}$ blows up only if $\|\xi \|_{H^{-\alpha}}$ does so. In particular, if $\kappa$ is chosen to be a function of $\xi_0$ as in Lemma \ref{lem-3D-NSE-limit} and $\| \xi_0 \|_{L^2}\leq R$, then the solutions to \eqref{3D-NSE-limit} coincide with those of the 3D Navier-Stokes equations with cut-off .
\end{remark}

Before moving forward, we present the following technical results which are classical.

\begin{lemma}\label{lem-3D-nonlinearity}
Let $V\in L^2(\T^3,\R^3)$ be a divergence free vector field.
\begin{itemize}
\item[\rm(i)] Let $\beta\in (0,1/2)$; if $V\in H^2$, then for any $f\in H^{-\beta}$, one has
  $$\|V\cdot \nabla f\|_{H^{-1-\beta}} \lesssim \|V \|_{H^2} \|f \|_{H^{-\beta}} .$$
\item[\rm(ii)] Let $\beta\in (0,1/2)$; if $f\in H^{2}$, then
  $$\|V\cdot \nabla f\|_{H^{-1-\beta}} \lesssim \|V \|_{H^{-\beta}} \|f \|_{H^{2}} .$$
\item[\rm(iii)] Let $\beta\in (0,3/2)$; if $V\in H^{s}$ for some $s\in [0,3/2-\beta]$ and $f\in H^{3/2-\beta-s}$, then
  $$\|V\cdot \nabla f\|_{H^{-1-\beta}} \lesssim \|V \|_{H^{s}} \|f \|_{H^{3/2-\beta-s}} .$$
\end{itemize}
\end{lemma}

\begin{proof}
(i) As $V$ is divergence free, we have $\|V\cdot \nabla f\|_{H^{-1-\beta}}= \|\nabla\cdot(V f)\|_{H^{-1-\beta}} \lesssim \|V f\|_{H^{-\beta}}$. By Sobolev embedding in the 3D case, $V\in C^s$ for any $s<1/2$. Fix $s\in (\beta,1/2)$; then for any $\varphi\in H^\beta$, it holds $\|V\varphi \|_{H^\beta} \lesssim \|V \|_{C^{s}}  \|\varphi \|_{H^\beta} \lesssim \|V \|_{H^2} \|\varphi \|_{H^\beta}$. Therefore,
  $$|\<Vf, \varphi\>| \le \|f\|_{H^{-\beta}} \|V\varphi \|_{H^\beta} \lesssim \|f\|_{H^{-\beta}} \|V \|_{H^2} \|\varphi \|_{H^\beta} $$
which implies $\|V f\|_{H^{-\beta}} \lesssim \|f\|_{H^{-\beta}} \|V \|_{H^2}$. Summarizing these estimates we obtain the first assertion.

(ii) The second estimate can be proved in the same way by using again that $\|V\cdot \nabla f\|_{H^{-1-\beta}} \lesssim \|V f\|_{H^{-\beta}}$ and exchanging the role of $V$ and $f$.

(iii) The proof of the last assertion is classical, see e.g. \cite[Lemma 2.1]{Temam95}; we give the proof for reader's convenience. For any $\varphi \in C^\infty$, by H\"older's inequality with $\frac1p+ \frac1q +\frac1r=1$,
  $$|\<V\cdot \nabla f, \varphi\>|= |\<f, V\cdot \nabla \varphi\>| \leq \|f \|_{L^p} \|V \|_{L^q} \|\nabla\varphi \|_{L^r}.$$
Recall the Sobolev embedding in 3D: for $\gamma\in [0,3/2)$, it holds $H^\gamma\subset L^a$ with $\frac1a = \frac12 - \frac{\gamma}3$; we can choose the exponents $p,q,r$ such that
  $$|\<V\cdot \nabla f, \varphi\>| \lesssim \|f\|_{H^{3/2-\beta-s}} \|V \|_{H^{s}} \|\nabla \varphi \|_{H^\beta} \lesssim \|f\|_{H^{3/2-\beta-s}} \|V \|_{H^{s}} \| \varphi \|_{H^{1+\beta}} .$$
Since $\varphi\in C^\infty$ is arbitrary, we obtain the desired result.
\end{proof}

Recall the nonlinearity with cut-off in \eqref{3D-NSE-cut-off}; for simplicity we define
  $$F(\xi)= g_{\alpha, R}(\xi) \L_u \xi, \quad \xi \in H^1, $$
where $g_{\alpha, R}(\xi)= g_R(\|\xi \|_{H^{-\alpha}})$ is the cut-off and $u=K\ast \xi$, $K$ being the Biot-Savart kernel on $\T^3$, which is a bounded linear operator from $H^a$ into $H^{a+1}$ for any $a\in \R$.

\begin{proposition}\label{prop-nonlin-3D}
Let $\alpha\in (0,1/2)$, it holds that
  $$\|F(\xi)- F(\tilde \xi)\|_{H^{-1-\alpha}} \lesssim \|\xi - \tilde\xi \|_{H^{-\alpha}} \big(\|\xi \|_{H^1} + \|\tilde\xi \|_{H^1} + \|\tilde \xi \|_{H^1} \|\tilde \xi \|_{L^2} \big). $$
\end{proposition}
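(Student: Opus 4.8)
The plan is to split $F(\xi)-F(\tilde\xi)$ so as to separate the variation of the nonlinear argument from that of the cut-off factor:
\[
F(\xi)-F(\tilde\xi)= g_{\alpha,R}(\xi)\big(\L_u\xi-\L_{\tilde u}\tilde\xi\big)+\big(g_{\alpha,R}(\xi)-g_{\alpha,R}(\tilde\xi)\big)\L_{\tilde u}\tilde\xi,
\]
where $u=K\ast\xi$ and $\tilde u=K\ast\tilde\xi$. The second summand is the easy one: since $g_R$ has Lipschitz constant $1$ and $\big|\,\|\xi\|_{H^{-\alpha}}-\|\tilde\xi\|_{H^{-\alpha}}\big|\le\|\xi-\tilde\xi\|_{H^{-\alpha}}$, one gets $|g_{\alpha,R}(\xi)-g_{\alpha,R}(\tilde\xi)|\le\|\xi-\tilde\xi\|_{H^{-\alpha}}$, while Remark \ref{rem-3D-nonlinearity} applied to $\tilde\xi$, combined with $\|\tilde\xi\|_{H^{-\alpha}}\le\|\tilde\xi\|_{L^2}$, gives $\|\L_{\tilde u}\tilde\xi\|_{H^{-1-\alpha}}\lesssim\|\tilde\xi\|_{H^1}\|\tilde\xi\|_{L^2}$. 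This produces exactly the $\|\tilde\xi\|_{H^1}\|\tilde\xi\|_{L^2}$ term in the claimed estimate.

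For the first summand I would use $0\le g_{\alpha,R}(\xi)\le1$ and expand the difference of Lie derivatives, writing $\L_u\xi=u\cdot\nabla\xi-\xi\cdot\nabla u$, into four bilinear terms:
\[
\L_u\xi-\L_{\tilde u}\tilde\xi= u\cdot\nabla(\xi-\tilde\xi)+(u-\tilde u)\cdot\nabla\tilde\xi-(\xi-\tilde\xi)\cdot\nabla u-\tilde\xi\cdot\nabla(u-\tilde u),
\]
each to be estimated in $H^{-1-\alpha}$, making repeated use of the Biot--Savart bound $\|u\|_{H^\delta}\lesssim\|\xi\|_{H^{\delta-1}}$ and of the fact that $\xi$, $\tilde\xi$ and $\xi-\tilde\xi$ are all divergence free. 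The transport-type terms follow directly from Lemma \ref{lem-3D-nonlinearity}: part (i) with $V=u$, $f=\xi-\tilde\xi$ and $\beta=\alpha$ gives $\|u\cdot\nabla(\xi-\tilde\xi)\|_{H^{-1-\alpha}}\lesssim\|u\|_{H^2}\|\xi-\tilde\xi\|_{H^{-\alpha}}\lesssim\|\xi\|_{H^1}\|\xi-\tilde\xi\|_{H^{-\alpha}}$, and part (ii) with $V=u-\tilde u\in H^{1-\alpha}$, $f=\tilde\xi$ gives $\|(u-\tilde u)\cdot\nabla\tilde\xi\|_{H^{-1-\alpha}}\lesssim\|u-\tilde u\|_{H^{1-\alpha}}\|\tilde\xi\|_{H^1}\lesssim\|\xi-\tilde\xi\|_{H^{-\alpha}}\|\tilde\xi\|_{H^1}$. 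For the stretching term $(\xi-\tilde\xi)\cdot\nabla u$ I would run the divergence-free integration-by-parts argument used for $\xi\cdot\nabla u$ in Remark \ref{rem-3D-nonlinearity} (i.e.\ $\<(\xi-\tilde\xi)\cdot\nabla u,\varphi\>=-\<\xi-\tilde\xi,(u\cdot\partial_i\varphi)_i\>$, then pair $H^{-\alpha}$ against $H^{\alpha}$ using the product estimate $\|fg\|_{H^\alpha}\lesssim\|f\|_{C^s}\|g\|_{H^\alpha}$ with $s\in(\alpha,1/2)$ and $H^2\hookrightarrow C^s$), obtaining $\|(\xi-\tilde\xi)\cdot\nabla u\|_{H^{-1-\alpha}}\lesssim\|u\|_{H^2}\|\xi-\tilde\xi\|_{H^{-\alpha}}\lesssim\|\xi\|_{H^1}\|\xi-\tilde\xi\|_{H^{-\alpha}}$.

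The genuinely delicate term is $\tilde\xi\cdot\nabla(u-\tilde u)$, since a direct appeal to Lemma \ref{lem-3D-nonlinearity} would force $u-\tilde u$ into $H^1$ and hence produce the too-strong factor $\|\xi-\tilde\xi\|_{L^2}$. Instead, using $\nabla\cdot\tilde\xi=0$, for $\varphi\in C^\infty(\T^3,\R^3)$ I would write $|\<\tilde\xi\cdot\nabla(u-\tilde u),\varphi\>|=|\<u-\tilde u,\tilde\xi\cdot\nabla\varphi\>|$ and apply H\"older's inequality with the exponent triple $\big(\frac{6}{1+2\alpha},\,3,\,\frac{6}{3-2\alpha}\big)$ (whose reciprocals sum to $1$), bounding $\|u-\tilde u\|_{L^{6/(1+2\alpha)}}\lesssim\|u-\tilde u\|_{H^{1-\alpha}}\lesssim\|\xi-\tilde\xi\|_{H^{-\alpha}}$, $\|\tilde\xi\|_{L^3}\lesssim\|\tilde\xi\|_{H^{1/2}}\lesssim\|\tilde\xi\|_{H^1}$ and $\|\nabla\varphi\|_{L^{6/(3-2\alpha)}}\lesssim\|\varphi\|_{H^{1+\alpha}}$ via the 3D Sobolev embeddings; taking the supremum over $\|\varphi\|_{H^{1+\alpha}}\le1$ then yields $\|\tilde\xi\cdot\nabla(u-\tilde u)\|_{H^{-1-\alpha}}\lesssim\|\xi-\tilde\xi\|_{H^{-\alpha}}\|\tilde\xi\|_{H^1}$. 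Collecting the four bounds for the first summand and adding that for the cut-off term gives the assertion. The main obstacle is precisely this last term: one must distribute the derivative so that the velocity difference lands in $H^{1-\alpha}$ (to recover $\|\xi-\tilde\xi\|_{H^{-\alpha}}$ rather than $\|\xi-\tilde\xi\|_{L^2}$) while keeping $\tilde\xi$ only in $H^1$ and the test field in $H^{1+\alpha}$, and it is the hypothesis $\alpha\in(0,1/2)$ (as in Lemma \ref{lem-3D-nonlinearity}) that keeps all the relevant embeddings valid.
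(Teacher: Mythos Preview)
Your proof is correct and follows essentially the same route as the paper's: the same splitting into the cut-off variation and the bilinear variation, the same use of Lemma~\ref{lem-3D-nonlinearity} and the divergence-free integration-by-parts of Remark~\ref{rem-3D-nonlinearity}. The only (inessential) differences are that you telescope the transport part as $u\cdot\nabla(\xi-\tilde\xi)+(u-\tilde u)\cdot\nabla\tilde\xi$ rather than the paper's $(u-\tilde u)\cdot\nabla\xi+\tilde u\cdot\nabla(\xi-\tilde\xi)$, and that you spell out explicit H\"older exponents for $\tilde\xi\cdot\nabla(u-\tilde u)$ where the paper simply invokes Remark~\ref{rem-3D-nonlinearity}.
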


\begin{proof}
Let $\tilde u= K\ast \tilde \xi$; then
  $$F(\xi)- F(\tilde \xi)=  g_{\alpha, R}(\xi) (\L_u \xi- \L_{\tilde u} \tilde \xi) + (g_{\alpha, R}(\xi) -g_{\alpha, R}(\tilde \xi))\L_{\tilde u} \tilde \xi =: I_1 + I_2. $$
We first estimate $I_2$. The Lipschitz property of $g_R$ yields
  $$|g_{\alpha, R}(\xi) -g_{\alpha, R}(\tilde \xi)| \leq \|g_R \|_{\rm Lip} \big| \|\xi\|_{H^{-\alpha}} -\|\tilde \xi\|_{H^{-\alpha}} \big| \leq \|\xi- \tilde\xi \|_{H^{-\alpha}}. $$
Note that both $\tilde u$ and $\tilde \xi$ are divergence free fields; by items (i)-(ii) of Lemma \ref{lem-3D-nonlinearity} with $\beta=\alpha$,
  $$\aligned
  \|\L_{\tilde u} \tilde \xi \|_{H^{-1-\alpha}} &\le \|\tilde u\cdot \nabla\tilde \xi\|_{H^{-1-\alpha}} +\|\tilde \xi\cdot \nabla \tilde u \|_{H^{-1-\alpha}} \lesssim \|\tilde u \|_{H^2} \|\tilde \xi \|_{H^{-\alpha}} \lesssim \|\tilde \xi \|_{H^1} \|\tilde \xi \|_{L^2}.
  \endaligned $$
Summarizing these estimates yields
  \begin{equation}\label{prop-nonlin-3D.1}
  \|I_2 \|_{H^{-1-\alpha}} \lesssim \|\xi- \tilde\xi \|_{H^{-\alpha}} \|\tilde\xi \|_{H^1} \|\tilde\xi \|_{L^2}.
  \end{equation}

It remains to estimate $I_1$. Since $0\leq g_R\leq 1$ we have
  $$\|I_1 \|_{H^{-1-\alpha}} \leq \|\L_u \xi- \L_{\tilde u} \tilde \xi \|_{H^{-1-\alpha}}\leq \|u\cdot\nabla \xi- \tilde u \cdot \nabla \tilde \xi \|_{H^{-1-\alpha}} + \|\xi\cdot\nabla u- \tilde \xi \cdot \nabla \tilde u \|_{H^{-1-\alpha}}. $$
Denote the two quantities on the right-hand side by $I_{1,1}$ and $I_{1,2}$. It holds that
  $$I_{1,1} \leq  \|(u-\tilde u)\cdot\nabla \xi \|_{H^{-1-\alpha}} + \|\tilde u \cdot \nabla(\xi- \tilde \xi) \|_{H^{-1-\alpha}}; $$
for the first term, applying Lemma \ref{lem-3D-nonlinearity}(iii) with $\beta=\alpha$ and $s=1-\alpha$ leads to
  $$\|(u-\tilde u)\cdot\nabla \xi \|_{H^{-1-\alpha}} \lesssim \|u-\tilde u\|_{H^{1-\alpha}} \|\xi \|_{H^{1/2}} \lesssim \|\xi- \tilde \xi \|_{H^{-\alpha}} \|\xi \|_{H^1}, $$
while the second term can be treated with Lemma \ref{lem-3D-nonlinearity}(i):
  $$\|\tilde u \cdot \nabla(\xi- \tilde \xi) \|_{H^{-1-\alpha}} \lesssim \|\tilde u\|_{H^2} \|\xi- \tilde \xi \|_{H^{-\alpha}} \lesssim \|\tilde \xi\|_{H^1} \|\xi- \tilde \xi \|_{H^{-\alpha}} .$$
To sum up, we arrive at
  $$\aligned
  I_{1,1} &\lesssim \big(\|\xi \|_{H^1} + \|\tilde \xi\|_{H^1}\big) \|\xi- \tilde \xi \|_{H^{-\alpha}}.
  \endaligned $$

Next, we turn to estimate the second quantity:
  $$\aligned
  I_{1,2} & \leq \| \xi \cdot \nabla(u- \tilde u) \|_{H^{-1-\alpha}} + \| (\xi-\tilde \xi)\cdot\nabla \tilde u \|_{H^{-1-\alpha}}.
  \endaligned $$
By Lemma \ref{lem-3D-nonlinearity}(iii) with $\beta=\alpha$ and $s=1/2$, we have
  $$\| \xi \cdot \nabla(u- \tilde u) \|_{H^{-1-\alpha}} \lesssim \|\xi\|_{H^{1/2}}\| u- \tilde u \|_{H^{1-\alpha}} \lesssim \|\xi\|_{H^1}\| \xi- \tilde \xi \|_{H^{-\alpha}}; $$
moveover, item (ii) of Lemma \ref{lem-3D-nonlinearity} implies that
  $$\| (\xi-\tilde \xi)\cdot\nabla \tilde u \|_{H^{-1-\alpha}} \lesssim \| \xi- \tilde \xi \|_{H^{-\alpha}} \|\tilde u\|_{H^2} \lesssim \| \xi- \tilde \xi \|_{H^{-\alpha}} \|\tilde \xi\|_{H^1}. $$
Therefore, we arrive at
  $$  I_{1,2} \lesssim \big(\|\xi \|_{H^1} + \|\tilde \xi\|_{H^1}\big) \|\xi- \tilde \xi \|_{H^{-\alpha}}. $$
Summarizing these arguments gives us
  $$\|I_1 \|_{H^{-1-\alpha}} \lesssim \big(\|\xi \|_{H^1} + \|\tilde \xi\|_{H^1}\big) \|\xi- \tilde \xi \|_{H^{-\alpha}}, $$
which, combined with \eqref{prop-nonlin-3D.1}, completes the proof.
\end{proof}

We will also need the following result whose proof is similar as above.

\begin{proposition}\label{prop-nonlin-3D-1}
It holds that
  $$\|F(\xi)- F(\tilde \xi)\|_{H^{-3/2}} \lesssim \|\xi - \tilde\xi \|_{L^2} \big(\|\xi \|_{L^2} + \|\tilde\xi \|_{L^2} + \|\tilde \xi \|_{L^2}^2 \big). $$
\end{proposition}

\begin{proof}
Let $I_1$ and $I_2$ be defined as in the proof of Proposition \ref{prop-nonlin-3D}. Again we start with estimating $I_2$. Clearly, we have
  $$|g_{\alpha, R}(\xi) -g_{\alpha, R}(\tilde \xi)| \leq \|\xi- \tilde\xi \|_{H^{-\alpha}}\leq \|\xi- \tilde\xi \|_{L^2}. $$
Recall that $\L_{\tilde u} \tilde \xi = \tilde u\cdot \nabla\tilde \xi -\tilde \xi\cdot \nabla \tilde u $; applying Lemma \ref{lem-3D-nonlinearity}(iii) with $\beta=1/2$ and $s=1$ leads to
  $$\aligned
  \|\tilde u\cdot \nabla\tilde \xi \|_{H^{-3/2}} \lesssim \|\tilde u \|_{H^1} \|\tilde \xi \|_{L^2}  \lesssim \|\tilde \xi \|_{L^2}^2;
  \endaligned $$
similarly, Lemma \ref{lem-3D-nonlinearity}(iii) with $\beta=1/2$ and $s=0$ yields  $\|\tilde \xi \cdot \nabla\tilde u\|_{H^{-3/2}} \lesssim \|\tilde \xi \|_{L^2} \|\tilde u \|_{H^1} \lesssim \|\tilde \xi \|_{L^2}^2 $. As a consequence,
  $$\|I_2 \|_{H^{-3/2}} \lesssim \|\xi- \tilde\xi \|_{L^2} \|\tilde \xi \|_{L^2}^2. $$

Next, we have
  $$\|I_1 \|_{H^{-3/2}} \leq \|u\cdot\nabla \xi- \tilde u \cdot \nabla \tilde \xi \|_{H^{-3/2}} + \|\xi\cdot\nabla u- \tilde \xi \cdot \nabla \tilde u \|_{H^{-3/2}}. $$
For the first quantity, by the triangle inequality and taking $\beta=1/2$ and $s=1$ in Lemma \ref{lem-3D-nonlinearity}(iii) leads to
  $$\aligned
  \|u\cdot\nabla \xi- \tilde u \cdot \nabla \tilde \xi \|_{H^{-3/2}} &\leq \|(u - \tilde u) \cdot \nabla \xi \|_{H^{-3/2}} + \| \tilde u \cdot \nabla (\xi- \tilde \xi) \|_{H^{-3/2}} \\
  & \lesssim \|u - \tilde u \|_{H^1} \|\xi \|_{L^2} + \| \tilde u \|_{H^1} \| \xi- \tilde \xi \|_{L^2} \\
  &\lesssim \big(\|\xi \|_{L^2} +\|\tilde \xi \|_{L^2} \big) \| \xi- \tilde \xi \|_{L^2}.
  \endaligned $$
In the same way, taking $s=0$ give us
  $$\aligned\|\xi\cdot\nabla u- \tilde \xi \cdot \nabla \tilde u \|_{H^{-3/2}}
  &\leq \| \xi \cdot \nabla (u - \tilde u)\|_{H^{-3/2}} + \| (\xi- \tilde \xi) \cdot \nabla \tilde u\|_{H^{-3/2}}\\
  &\lesssim \| \xi\|_{L^2} \| u - \tilde u\|_{H^1} + \| \xi-\tilde \xi\|_{L^2} \| \tilde u\|_{H^1} \\
  &\leq \big(\|\xi \|_{L^2} +\|\tilde \xi \|_{L^2} \big) \| \xi- \tilde \xi \|_{L^2} .
  \endaligned $$
Summing up the above estimates we complete the proof.
\end{proof}

\subsection{Proof of Theorem \ref{thm-3D-SNSE}} \label{subs-3D-NSE-proof}

The strategy of proof is similar to that of \cite[Theorem 1.6]{FlaLuo21}; there we only claimed the existence of some noise such that the blow-up probability of solutions to stochastic 3D Navier-Stokes equations \eqref{SNSE-vort-Ito-1} is sufficiently small. The quantitative estimates below allow us to choose noise parameters according to prescribed threshold for blow-up probability.

We begin with describing the idea of proof. We fix parameters $\alpha\in (0,1/2)$ and $L>0$. Recall the constant $C_1$ in Lemma \ref{lem-3D-NSE-limit} and fix $\kappa\geq C_1 L^4+1$ and $R=2L$. We shall compare the pathwise unique solution $\xi$ to
  \begin{equation}\label{proof-3D-SNSE}
  \aligned
  \d\xi &= \big[\Delta\xi - g_{\alpha,R}(\xi) \L_u\xi \big]\,\d t + \Pi(\circ\, \d W\cdot\nabla \xi) \\
  &=\big[\Delta\xi + S_{\theta^N}^{(3)}(\xi) - g_{\alpha,R}(\xi) \L_u\xi \big]\,\d t + \Pi(\d W\cdot\nabla \xi)
  \endaligned
  \end{equation}
with $\tilde\xi$ to
  \begin{equation}\label{proof-3D-NSE}
  \partial_t \tilde \xi = \Big(1+ \frac35 \kappa\Big) \Delta\tilde\xi - g_{\alpha,R}(\tilde\xi) \L_{\tilde u} \tilde\xi.
  \end{equation}
The noise $W$ is determined by $\kappa$ and $\theta^N\in \ell^2$ as in \eqref{noise}; the solution $\xi$ also depends on $N$ but we omit it for simplicity. Note that we add a cut-off to the nonlinearity of deterministic limit equation and denote its solution with a tilde. Both equations are equipped with the same initial data $\xi_0 \in H$ with $\|\xi_0 \|_{L^2} \leq L$. By Remark \ref{rem-3D-blow-up}, the solution $\tilde\xi$ coincides with the one to \eqref{3D-NSE-limit} without cut-off, which satisfies \eqref{lem-3D-NSE-limit.1}; in particular, $\|\tilde\xi_t \|_{L^2}$ decreases exponentially fast, and as a consequence, $\|\tilde\xi \|_{C([0,T]; H^{-\alpha})}$ is bounded by $\|\xi_0 \|_{L^2} \leq L$.

As mentioned above Lemma \ref{lem-3D-NSE-cut-off}, if $\sup_{t\in [0,T]} \|\xi_t \|_{H^{-\alpha}} \leq R=2L$, then $\xi$ also solves on $[0,T]$ the stochastic 3D Navier-Stokes equations \eqref{SNSE-vort-Ito-1} without cut-off. Furthermore, if, for some big $t_0\in [0,T]$, $\|\xi_{t_0} \|_{L^2}$ is small enough (e.g. less than the constant $r_0$ mentioned below \eqref{3D-NSE-energy-identity}), then we can regard $\xi_{t_0}$ as a initial condition and it is known that \eqref{SNSE-vort-Ito-1} admits a unique global solution for such initial data. Therefore, our purpose is to show that, with large probability, $\xi$ is close to $\tilde\xi$ both in $C([0,T]; H^{-\alpha})$ and in $L^2(T-1,T; L^2)$: the first closeness implies $\xi$ is a solution to \eqref{SNSE-vort-Ito-1} on $[0,T]$, while the second one will allow us to extend the solution to $+\infty$. Indeed, if for some large $T$ one has $\|\xi -\tilde\xi\|_{L^2(T-1,T; L^2)} \vee \| \tilde \xi \|_{L^2(T-1,T; L^2)} \leq r_0/2$, then there exists some $t_0\in [T-1, T]$ such that $\|\xi_{t_0}\|_{L^2} \leq r_0$. We shall make these considerations rigorous in the sequel. For simplicity, we assume $r_0\leq 1$.

Recall that $\tau(\xi_0,\kappa,\theta)$ is the blow-up time of solution to \eqref{SNSE-vort-Ito-1} (i.e. \eqref{SNSE-vort-Ito}) with initial data $\xi_0$. The above discussions imply that
  $$\{\tau(\xi_0,\kappa,\theta) =+\infty \} \supset \bigg\{ \sup_{t\in [0,T]} \|\xi_t\|_{H^{-\alpha}} \leq 2L \bigg\} \cap \big\{\|\xi \|_{L^2(T-1,T; L^2)}\leq r_0 \big\}; $$
as a result,
  \begin{equation}\label{blow-up-probab}
  \P(\tau(\xi_0,\kappa,\theta) <+\infty) \leq \P\bigg(\sup_{t\in [0,T]} \|\xi_t\|_{H^{-\alpha}}> 2L \bigg) + \P \big(\|\xi \|_{L^2(T-1,T; L^2)} > r_0 \big).
  \end{equation}
It remains to estimate the two probabilities on the right-hand side.

We make some preparations. Take $T= 1+ \log(2L/r_0)$, then by Lemma \ref{lem-3D-NSE-limit} it holds $\|\tilde\xi_t \|_{L^2} \leq r_0/2$ for all $t\geq T-1$. For parameters $L, \kappa, R$ and $ T$ chosen as above, Lemma \ref{lem-3D-NSE-cut-off} gives us
  \begin{equation}\label{solu-estimates.1}
  \sup_{t\in [0,T]} \|\xi_t\|_{L^2}^2 + \int_0^T \|\nabla \xi_s \|_{L^2}^2\,\d s \leq L^2 + C_\alpha (2L+1)^{6/(1-2\alpha)} \, T =: K_{\alpha, L, T} ;
  \end{equation}
in particular, by Poincar\'e's inequality,
  \begin{equation}\label{solu-estimates.2}
   \int_0^T \|\xi_s \|_{H^1}^2\,\d s = \int_0^T \big( \|\xi_s \|_{L^2}^2+ \|\nabla \xi_s \|_{L^2}^2 \big)\,\d s \leq \Big(\frac1{4\pi^2} + 1\Big)\int_0^T \|\nabla \xi_s \|_{L^2}^2\,\d s \leq 2K_{\alpha, L, T}.
  \end{equation}
Recalling the notation $F(\xi)$ defined above Proposition \ref{prop-nonlin-3D}; we rewrite the It\^o form of equation \eqref{proof-3D-SNSE} as follows:
  $$\d\xi = \Big(1+ \frac35\kappa \Big) \Delta\xi\,\d t + \Big(S_{\theta^N}^{(3)}(\xi)- \frac35\kappa \Delta\xi \Big)\,\d t - F(\xi)\,\d t + \Pi(\d W\cdot\nabla \xi). $$
By \eqref{thm-Ito-corrector.2} and \eqref{solu-estimates.2}, the second term on the right-hand side vanishes in $L^2(0,T; H^{-1-\alpha})$ as $N\to \infty$. Let $P_t= e^{t(1+3\kappa/5) \Delta},\, t\geq 0$; the above equation can be written in mild form as
  $$\xi_t= P_t \xi_0 + \int_0^t P_{t-s} \Big(S_{\theta^N}^{(3)}(\xi_s)- \frac35\kappa \Delta\xi_s \Big)\,\d s - \int_0^t P_{t-s} F(\xi_s) \,\d s +Z_t, $$
where the stochastic convolution is expressed as
  $$Z_t= \sqrt{\frac32 \kappa} \sum_{k,i} \theta_k \int_0^t P_{t-s} \Pi(\sigma_{k,i} \cdot\nabla \xi_s)\,\d W^{k,i}_s. $$
We also write the deterministic equation \eqref{proof-3D-NSE} in mild form:
  $$\tilde \xi_t= P_t \xi_0 - \int_0^t P_{t-s}F(\tilde\xi_s) \,\d s. $$
Therefore, defining the difference $\eta_t = \xi_t-\tilde\xi_t$, we have
  \begin{equation}\label{blow-up-difference}
  \eta_t = \int_0^t P_{t-s} \Big(S_{\theta^N}^{(3)}(\xi_s)- \frac35\kappa \Delta\xi_s \Big)\,\d s- \int_0^t P_{t-s} \big(F(\xi_s)- F(\tilde \xi_s) \big)\,\d s +Z_t.
  \end{equation}

\begin{lemma}\label{lem-blow-up-1}
There exists a constant $C=C_{\alpha, L, r_0}>0$ such that
  $$\P\bigg(\sup_{t\in [0,T]} \|\xi_t\|_{H^{-\alpha}}> 2L \bigg) \leq C_{\alpha, L, r_0} \bigg(\frac{\kappa}{N^{2\alpha}} + \frac{\kappa^{\alpha/2}}{N^\alpha} \bigg) . $$
\end{lemma}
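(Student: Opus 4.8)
The plan is to compare $\xi$ with $\tilde\xi$ through the difference $\eta_t=\xi_t-\tilde\xi_t$ governed by the mild identity \eqref{blow-up-difference}, and to reduce the probability estimate to a second--moment bound by Markov's inequality. First I would note that, by Lemma \ref{lem-3D-NSE-limit} together with Remark \ref{rem-3D-blow-up}, the solution $\tilde\xi$ of \eqref{proof-3D-NSE} satisfies $\|\tilde\xi_t\|_{H^{-\alpha}}\leq\|\tilde\xi_t\|_{L^2}\leq\|\xi_0\|_{L^2}\leq L$ for every $t\geq0$; hence $\{\sup_{t\in[0,T]}\|\xi_t\|_{H^{-\alpha}}>2L\}\subset\{\sup_{t\in[0,T]}\|\eta_t\|_{H^{-\alpha}}>L\}$, and by Markov's inequality it suffices to prove $\E\big[\sup_{t\in[0,T]}\|\eta_t\|_{H^{-\alpha}}^2\big]\lesssim_{\alpha,L,r_0}\kappa N^{-2\alpha}+\kappa^{\alpha/2}N^{-\alpha}$.

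Next I would split \eqref{blow-up-difference} as $\eta_t=A_t+B_t+Z_t$, where $A_t=\int_0^tP_{t-s}\big(S_{\theta^N}^{(3)}(\xi_s)-\tfrac35\kappa\Delta\xi_s\big)\,\d s$ is the corrector term, $B_t=-\int_0^tP_{t-s}\big(F(\xi_s)-F(\tilde\xi_s)\big)\,\d s$ is the nonlinear difference, and $Z_t$ the stochastic convolution, with $P_t=e^{t(1+3\kappa/5)\Delta}$, so the effective diffusivity is $\delta=1+\tfrac35\kappa\asymp\kappa$ (recall $\kappa\geq1$). For the corrector term, Lemma \ref{lem:heat-kernel} applied with target index $H^{-\alpha}$ gives $\sup_{t\in[0,T]}\|A_t\|_{H^{-\alpha}}^2\lesssim\delta^{-1}\int_0^T\big\|S_{\theta^N}^{(3)}(\xi_s)-\tfrac35\kappa\Delta\xi_s\big\|_{H^{-1-\alpha}}^2\,\d s$; inserting the quantitative bound \eqref{thm-Ito-corrector.2} and then \eqref{solu-estimates.2} yields $\sup_{t\in[0,T]}\|A_t\|_{H^{-\alpha}}^2\lesssim_{\alpha,L,r_0}\kappa N^{-2\alpha}$ $\P$-a.s. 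For the stochastic convolution I would invoke Theorem \ref{thm-stoch-convol-2} with $\beta=\alpha$, $\eps=\alpha/2$ (both admissible since $\alpha<1/2$), using $R^2=K_{\alpha,L,T}$ from \eqref{solu-estimates.1} and $\|\theta^N\|_{\ell^\infty}\sim N^{-3/2}$; since $4(\beta-\eps)/d=2\alpha/3$ and $\kappa\delta^{\eps-1}\asymp\kappa^{\alpha/2}$, this gives $\E\big[\sup_{t\in[0,T]}\|Z_t\|_{H^{-\alpha}}^2\big]\lesssim_{\alpha,L,r_0}\kappa^{\alpha/2}N^{-\alpha}$, the second term of the claim.

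The remaining, more delicate piece is $B_t$. By Proposition \ref{prop-nonlin-3D} and $\|\tilde\xi_s\|_{L^2}\leq L$ one has $\|F(\xi_s)-F(\tilde\xi_s)\|_{H^{-1-\alpha}}\lesssim_L\|\eta_s\|_{H^{-\alpha}}\,\phi_s$ with $\phi_s:=\|\xi_s\|_{H^1}+\|\tilde\xi_s\|_{H^1}$, so Lemma \ref{lem:heat-kernel} gives, for every $t\in[0,T]$, $\sup_{r\leq t}\|B_r\|_{H^{-\alpha}}^2\lesssim\delta^{-1}\int_0^t\phi_s^2\,\sup_{u\leq s}\|\eta_u\|_{H^{-\alpha}}^2\,\d s$. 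Setting $\Psi(t):=\sup_{r\leq t}\|\eta_r\|_{H^{-\alpha}}^2$ and combining the three estimates, I obtain a pathwise integral inequality $\Psi(t)\leq a(\omega)+\tfrac{c}{\kappa}\int_0^t\phi_s^2\,\Psi(s)\,\d s$ with $a(\omega)\lesssim_{\alpha,L,r_0}\kappa N^{-2\alpha}+\sup_{r\leq T}\|Z_r\|_{H^{-\alpha}}^2$. Using \eqref{solu-estimates.2} for $\xi$ and \eqref{lem-3D-NSE-limit.1} for $\tilde\xi$, one has $\int_0^T\phi_s^2\,\d s\leq M_0(\alpha,L,r_0)$ $\P$-a.s.; since $\kappa\geq1$, Grönwall's inequality applied pathwise gives $\Psi(T)\leq a(\omega)\,e^{cM_0}$, and taking expectations, together with the $Z$-estimate above and the Markov reduction of the first paragraph, closes the argument (the factor $L^{-2}e^{cM_0}$ is absorbed into $C_{\alpha,L,r_0}$).

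The main obstacle I anticipate is the bookkeeping around the $B_t$ term: the coefficient $\phi_s^2$ in the Grönwall estimate is random and time-dependent, so one must first secure the a.s. bound $\int_0^T\phi_s^2\,\d s\leq M_0$ and run Grönwall pathwise before taking expectations; one also has to choose the interpolation exponents $(\beta,\eps)$ in Theorem \ref{thm-stoch-convol-2} and the Sobolev index in Lemma \ref{lem:heat-kernel} so that exactly the powers $\kappa N^{-2\alpha}$ and $\kappa^{\alpha/2}N^{-\alpha}$ emerge. The three component estimates are otherwise routine given the preparatory results already established.
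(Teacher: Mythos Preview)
Your proposal is correct and follows essentially the same route as the paper: reduce via $\|\tilde\xi_t\|_{H^{-\alpha}}\le L$ and Markov's inequality to a second-moment bound on $\sup_t\|\eta_t\|_{H^{-\alpha}}^2$, estimate the corrector and nonlinear pieces in $H^{-1-\alpha}$ by Lemma~\ref{lem:heat-kernel} together with \eqref{thm-Ito-corrector.2} and Proposition~\ref{prop-nonlin-3D}, close with a pathwise Gr\"onwall using the a.s.\ bounds \eqref{solu-estimates.2} and \eqref{lem-3D-NSE-limit.1}, and handle $Z_t$ by Theorem~\ref{thm-stoch-convol-2} with exactly the choices $\beta=\alpha$, $\eps=\alpha/2$. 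The only cosmetic difference is that you organize the argument around the explicit splitting $\eta_t=A_t+B_t+Z_t$ and the monotone envelope $\Psi(t)=\sup_{r\le t}\|\eta_r\|_{H^{-\alpha}}^2$, whereas the paper bounds $\|\eta_t\|_{H^{-\alpha}}^2$ directly and then takes the supremum after Gr\"onwall; the resulting estimates and constants are the same.
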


\begin{proof}

Under the condition $\kappa\geq C_1 L^4+1$, the solution $\tilde\xi$ also solves the deterministic 3D Navier-Stokes equations \eqref{3D-NSE-limit} without cut-off and fulfills $\sup_{t\in [0,T]} \|\tilde\xi_t \|_{H^{-\alpha}} \leq \sup_{t\in [0,T]} \|\tilde \xi_t \|_{L^2} \leq L$. The triangle inequality implies
  $$\|\xi_t \|_{H^{-\alpha}} \leq \|\tilde\xi_t \|_{H^{-\alpha}}+ \|\eta_t \|_{H^{-\alpha}} \leq L +\|\eta_t \|_{H^{-\alpha}}. $$
Therefore,
  \begin{equation}\label{proof-probab}
  \P\bigg(\sup_{t\in [0,T]} \|\xi_t\|_{H^{-\alpha}}> 2L \bigg) \leq \P\bigg(\sup_{t\in [0,T]} \|\eta_t \|_{H^{-\alpha}} >L \bigg) \leq \frac1{L^2} \E\bigg[\sup_{t\in [0,T]} \|\eta_t \|_{H^{-\alpha}}^2 \bigg].
  \end{equation}
It suffices to estimate the right-hand side.

By \eqref{blow-up-difference} and Lemma \ref{lem:heat-kernel},
  $$\|\eta_t \|_{H^{-\alpha}}^2 \lesssim \frac1\kappa \int_0^t \Big\| S_{\theta^N}^{(3)}(\xi_s)- \frac35\kappa \Delta\xi_s \Big\|_{H^{-1-\alpha}}^2 \,\d s + \frac1\kappa \int_0^t \big\| F(\xi_s)- F(\tilde \xi_s) \big\|_{H^{-1-\alpha}}^2\,\d s + \|Z_t \|_{H^{-\alpha}}^2. $$
Now we apply \eqref{thm-Ito-corrector.2} and Proposition \ref{prop-nonlin-3D} to obtain
  $$\aligned
  \|\eta_t \|_{H^{-\alpha}}^2 \lesssim &\, \frac\kappa{N^{2\alpha}}\int_0^t \|\xi_s \|_{H^1}^2 \,\d s + \|Z_t \|_{H^{-\alpha}}^2 \\
  &\, + \frac1\kappa \int_0^t  \|\eta_s \|_{H^{-\alpha}}^2 \big(\|\xi_s \|_{H^1}^2 + \|\tilde\xi_s \|_{H^1}^2 + \|\tilde \xi_s \|_{H^1}^2 \|\tilde \xi_s \|_{L^2}^2 \big)\,\d s.
  \endaligned $$
Gronwall's inequality implies, for all $t\leq T$,
  $$\aligned
  \|\eta_t \|_{H^{-\alpha}}^2 \lesssim &\, \bigg(\frac\kappa{N^{2\alpha}} \|\xi \|_{L^2(0,T;H^1)}^2 + \sup_{t\in [0,T]} \|Z_t \|_{H^{-\alpha}}^2 \bigg)\\
  &\,\times \exp\bigg(\frac1\kappa \int_0^T \big(\|\xi_s \|_{H^1}^2 + \|\tilde\xi_s \|_{H^1}^2 + \|\tilde \xi_s \|_{H^1}^2 \|\tilde \xi_s \|_{L^2}^2 \big)\,\d s \bigg).
  \endaligned $$
Recall that $\tilde\xi$ solves the deterministic equation \eqref{3D-NSE-limit} and fulfills the estimates in \eqref{lem-3D-NSE-limit.1}; we apply Poincar\'e's inequality as in \eqref{solu-estimates.2} to get
  $$\int_0^T \big( \|\tilde\xi_s \|_{H^1}^2+ \|\tilde \xi_s \|_{H^1}^2 \|\tilde \xi_s \|_{L^2}^2 \big)\,\d s \leq 2L^2(L^2+1). $$
Combining the above computations with \eqref{solu-estimates.2}, we have, for any $t\leq T$,
  $$\|\eta_t \|_{H^{-\alpha}}^2 \lesssim \exp\bigg(\frac2\kappa \big[K_{\alpha, L,T} + L^2(L^2+1) \big] \bigg) \bigg(\frac{2\kappa}{N^{2\alpha}} K_{\alpha, L,T} + \sup_{t\in [0,T]} \|Z_t \|_{H^{-\alpha}}^2 \bigg). $$
As a result,
  $$\E\bigg[\sup_{t\in [0,T]} \|\eta_t \|_{H^{-\alpha}}^2 \bigg] \lesssim \exp\bigg(\frac{2}{\kappa} \big[K_{\alpha, L,T} + L^2(L^2+1) \big] \bigg) \bigg(\frac{\kappa}{N^{2\alpha}} K_{\alpha, L,T} + \E \bigg[\sup_{t\in [0,T]} \|Z_t \|_{H^{-\alpha}}^2 \bigg] \bigg). $$
Applying Theorem \ref{thm-stoch-convol-2} with $\delta=1+\frac35 \kappa$, $\beta=\alpha$, $\eps=\alpha/2$ and $d=3$ yields
  $$\E \bigg[\sup_{t\in [0,T]} \|Z_t \|_{H^{-\alpha}}^2 \bigg] \lesssim_{\alpha, T} \frac{\kappa^{\alpha/2}}{N^\alpha} K_{\alpha, L,T} ,  $$
where we have also used $\|\theta^N \|_{\ell^\infty} \sim N^{-3/2}$ and the bound \eqref{solu-estimates.1}. To sum up, we obtain
  \begin{equation}\label{proof-probab.1}
  \E\bigg[\sup_{t\in [0,T]} \|\eta_t \|_{H^{-\alpha}}^2 \bigg] \lesssim_{\alpha, T} \exp\bigg(\frac{2}{\kappa} \big[K_{\alpha, L,T} + L^2(L^2+1) \big] \bigg) \bigg(\frac{\kappa}{N^{2\alpha}} + \frac{\kappa^{\alpha/2}}{N^\alpha} \bigg) K_{\alpha, L,T}.
  \end{equation}
Inserting this estimate into \eqref{proof-probab} and noting that $T= 1+ \log(2L/r_0)$, we complete the proof.
\end{proof}

Next we estimate the second probability on the right-hand side of \eqref{blow-up-probab}.

\begin{lemma}\label{lem-blow-up-2}
Let $\kappa\geq (C_1 L^4+1)\vee (2(K_{\alpha, L,T} +1))^{1/2}$. For any $a\in (0,1)$ and $b\in (3/2, 7/2]$, there exists a constant $C=C_{\alpha, L, r_0, a, b}>0$ such that
  $$\P \big(\|\xi \|_{L^2(T-1,T; L^2)} > r_0 \big) \leq C_{\alpha, L, r_0,a, b} \bigg(\frac{\kappa^{1-\alpha}}{N^{2\alpha}} + \frac{1}{N^\alpha \kappa^{\alpha/2}} + \frac{C}{N^2} + \kappa^{\frac{4b-2ab -3a}{4(a+ b)}} N^{-\frac{2a}{a+b}} \bigg) . $$
\end{lemma}

\begin{proof}
Recall that $\|\tilde \xi_t \|_{L^2} \leq r_0/2$ for all $t\geq T-1$. The triangle inequality implies
  $$\|\xi \|_{L^2(T-1,T; L^2)} \leq \|\tilde \xi \|_{L^2(T-1,T; L^2)} + \|\eta \|_{L^2(T-1,T; L^2)} \leq \frac{r_0}2 + \|\eta \|_{L^2(T-1,T; L^2)}, $$
therefore,
  \begin{equation}\label{lem-blow-up-2.1}
  \P \big(\|\xi \|_{L^2(T-1,T; L^2)} > r_0 \big) \leq \P \big(\|\eta \|_{L^2(T-1,T; L^2)} > r_0/2 \big).
  \end{equation}
It remains to estimate the probability on the right-hand side.

We rewrite \eqref{blow-up-difference} as follows: for $t\in [T-1, T]$,
  $$\aligned
  \eta_t =&\, P_{t-T+1} (\eta_{T-1}) + \int_{T-1}^t P_{t-s} \Big(S_{\theta^N}^{(3)}(\xi_s)- \frac35\kappa \Delta\xi_s \Big)\,\d s \\
  &\, - \int_{T-1}^t P_{t-s} \big(F(\xi_s)- F(\tilde \xi_s) \big)\,\d s +Z_{T-1,t},
  \endaligned $$
where $Z_{T-1,t}$ is the stochastic convolution defined as before. We estimate the $L^2$-norm:
  \begin{equation}\label{lem-blow-up-2.2}
  \|\eta_t \|_{L^2}^2 \lesssim \| P_{t-T+1} (\eta_{T-1}) \|_{L^2}^2 + I_1(t) + I_2(t) + \|Z_{T-1,t} \|_{L^2}^2,
  \end{equation}
where
  $$\aligned
  I_1(t)&= \bigg\| \int_{T-1}^t P_{t-s} \Big(S_{\theta^N}^{(3)}(\xi_s)- \frac35\kappa \Delta\xi_s \Big)\,\d s \bigg\|_{L^2}^2, \\
  I_2(t) &= \bigg\| \int_{T-1}^t P_{t-s} \big(F(\xi_s)- F(\tilde \xi_s) \big)\,\d s \bigg\|_{L^2}^2.
  \endaligned $$

Recall that $P_t= e^{t(1+3\kappa/5) \Delta},\, t\geq 0$. First, for $\alpha\in (0,1/2)$ as above,
  $$\aligned
  \int_{T-1}^T \| P_{t-T+1}(\eta_{T-1}) \|_{L^2}^2 \,\d t &\lesssim_\alpha \int_{T-1}^T \frac1{(\kappa(t-T+1))^\alpha} \| \eta_{T-1} \|_{H^{-\alpha}}^2 \,\d t \lesssim_\alpha \frac1{\kappa^\alpha} \| \eta_{T-1} \|_{H^{-\alpha}}^2.
  \endaligned $$
Therefore, by \eqref{proof-probab.1},
  $$\aligned
  \E \int_{T-1}^T \| P_{t-T+1}(\eta_{T-1}) \|_{L^2}^2 \,\d t &\lesssim \frac1{\kappa^\alpha} \E \bigg[\sup_{t\in [0,T]}\| \eta_t \|_{H^{-\alpha}}^2 \bigg] \\
  &\lesssim \exp\bigg(\frac{2}{\kappa} \big[K_{\alpha, L,T} + L^2(L^2+1) \big] \bigg) \bigg(\frac{\kappa^{1-\alpha}}{N^{2\alpha}} + \frac{1}{N^\alpha \kappa^{\alpha/2}} \bigg) K_{\alpha, L,T}.
  \endaligned $$
Next, by the second estimate in Lemma \ref{lem:heat-kernel} and \eqref{thm-Ito-corrector.2},
  $$\aligned
  \int_{T-1}^T I_1(t)\,\d t &\lesssim \frac1{\kappa^2} \int_{T-1}^T \Big\| S_{\theta^N}^{(3)}(\xi_s)- \frac35\kappa \Delta\xi_s \Big\|_{H^{-2}}^2 \,\d s \\
  &\leq \frac{C}{N^2} \int_{T-1}^T \|\xi_s \|_{H^1}^2\,\d s \leq \frac{C}{N^2} K_{\alpha, L, T},
  \endaligned $$
where the last step follows from \eqref{solu-estimates.2}. Similarly, applying Proposition \ref{prop-nonlin-3D-1} yields
  $$\aligned
  \int_{T-1}^T I_2(t)\,\d t &\lesssim \frac1{\kappa^2} \int_{T-1}^T \big\| F(\xi_s)- F(\tilde \xi_s) \big\|_{H^{-2}}^2 \,\d s \\
  &\lesssim \frac1{\kappa^2} \int_{T-1}^T \|\eta_s \|_{L^2}^2 \big( \|\xi_s\|_{L^2}^2 + \|\tilde \xi_s\|_{L^2}^2 + \|\tilde \xi_s\|_{L^2}^4 \big) \,\d s .
  \endaligned $$
Recall that $\|\tilde \xi_s\|_{L^2} \leq r_0/2 \leq 1/2$ for $s\in [T-1,T]$; using the bound on $\xi$  we have
  $$\aligned
  \int_{T-1}^T I_2(t)\,\d t &\lesssim \frac1{\kappa^2} (K_{\alpha, L,T} + 1) \int_{T-1}^T \|\eta_s \|_{L^2}^2 \,\d s .
  \endaligned $$
Finally, with the bound \eqref{solu-estimates.1} in mind, we can repeat the proof of Theorem \ref{thm-stoch-convol} (with $\delta=1+ 3\kappa/5$ and $\nu=1$) to estimate the term involving stochastic convolution: for $0<a<1$ and $3/2< b\le 7/2$,
  $$\E \int_{T-1}^T \|Z_{T-1,t} \|_{L^2}^2 \,\d t \lesssim \kappa^{\frac{4b-2ab -3a}{4(a+ b)}} N^{-\frac{3a}{a+b}} K_{\alpha, L,T}. $$

Combining these estimates with \eqref{lem-blow-up-2.2} leads to
  $$\aligned
  \E \int_{T-1}^T \|\eta_t \|_{L^2}^2 \,\d t \lesssim&\, \exp\bigg(\frac{2}{\kappa} \big[K_{\alpha, L,T} + L^2(L^2+1) \big] \bigg) \bigg(\frac{\kappa^{1-\alpha}}{N^{2\alpha}} + \frac{1}{N^\alpha \kappa^{\alpha/2}} \bigg) K_{\alpha, L,T} + \frac{C}{N^2} K_{\alpha, L, T} \\
  &\, + \frac1{\kappa^2} (K_{\alpha, L,T} + 1)\, \E\! \int_{T-1}^T \|\eta_s \|_{L^2}^2 \,\d s + \kappa^{\frac{4b-2ab -3a}{4(a+ b)}} N^{-\frac{3a}{a+b}} K_{\alpha, L,T}.
  \endaligned $$
By the choice of $\kappa$, we have $\frac1{\kappa^2} (K_{\alpha, L,T} + 1) \leq \frac12$ and thus
  $$\aligned
  \E \int_{T-1}^T \|\eta_t \|_{L^2}^2 \,\d t & \lesssim \exp\bigg(\frac{2}{\kappa} \big[K_{\alpha, L,T} + L^2(L^2+1) \big] \bigg) \bigg(\frac{\kappa^{1-\alpha}}{N^{2\alpha}} + \frac{1}{N^\alpha \kappa^{\alpha/2}} \bigg) K_{\alpha, L,T} \\
  &\quad + \frac{C}{N^2} K_{\alpha, L, T}  + \kappa^{\frac{4b-2ab -3a}{4(a+ b)}} N^{-\frac{3a}{a+b}} K_{\alpha, L,T}\\
  & \lesssim \tilde K_{\alpha, L,T} \bigg(\frac{\kappa^{1-\alpha}}{N^{2\alpha}} + \frac{1}{N^\alpha \kappa^{\alpha/2}} + \frac{C}{N^2} + \kappa^{\frac{4b-2ab -3a}{4(a+ b)}} N^{-\frac{3a}{a+b}} \bigg).
  \endaligned $$
Therefore,
  $$\aligned
  \P \big(\|\eta \|_{L^2(T-1,T; L^2)} > r_0/2 \big) & \leq \frac4{r_0^2} \E \int_{T-1}^T \|\eta_t \|_{L^2}^2 \,\d t \\
  &\leq \frac4{r_0^2} \tilde K_{\alpha, L,T} \bigg(\frac{\kappa^{1-\alpha}}{N^{2\alpha}} + \frac{1}{N^\alpha \kappa^{\alpha/2}} + \frac{C}{N^2} + \kappa^{\frac{4b-2ab -3a}{4(a+ b)}} N^{-\frac{3a}{a+b}} \bigg).
  \endaligned $$
Substituting this estimate into \eqref{lem-blow-up-2.1} completes the proof.
\end{proof}

Finally we are ready to prove the main result.

\begin{proof}[Proof of Theorem \ref{thm-3D-SNSE}]
Taking $a=1/2$ and $b=2$ in Lemma \ref{lem-blow-up-2}, we deduce that for some constant $\hat C=\hat C_{\alpha, L, r_0}>0$ it holds
  $$\aligned
  \P \big(\|\xi \|_{L^2(T-1,T; L^2)} > r_0 \big)
  &\leq \hat C_{\alpha, L, r_0} \bigg(\frac{\kappa^{1-\alpha}}{N^{2\alpha}} + \frac{1}{N^\alpha \kappa^{\alpha/2}} + \frac{1}{N^2} + \frac{\kappa^{9/20}}{ N^{3/5}} \bigg) \\
  &\leq 2\hat C_{\alpha, L, r_0} \bigg(\frac{\kappa^{1-\alpha}}{N^{\alpha}} + \frac{\kappa^{9/20}}{ N^{3/5}} \bigg)
  \endaligned $$
since $\kappa \geq 1$. Combining this result with \eqref{blow-up-probab} and Lemma \ref{lem-blow-up-1}, we complete the proof.
\end{proof}

\appendix

\section{Proof of Theorem \ref{thm-Ito-corrector}} \label{sec-appendix}

This section is devoted to the proof of Theorem \ref{thm-Ito-corrector}; we shall only prove the first estimate since the second one can be proved in the same way by improving some estimates  in \cite[Section 5]{FlaLuo21}, see Remark \ref{rem-corrector-3D} for a brief discussion of necessary modifications.

Let $\Pi^\perp$ be the projection operator  orthogonal to $\Pi$: for any vector field $X\in L^2(\T^2, \R^2)$, $\Pi^\perp X$ is the gradient part of $X$. Formally,
  \begin{equation}\label{Leray-proj-1}
  \Pi^\perp X = \nabla \Delta^{-1} \div(X).
  \end{equation}
On the other hand, if $X= \sum_{l\in \Z^2_0} X_l e_l$, $X_l\in \mathbb C^2$, then
  \begin{equation}\label{Leray-proj-2}
  \Pi^\perp X= \sum_l \frac{l\cdot X_l}{|l|^2} l e_l = \nabla\bigg[ \frac1{2\pi {\rm i}} \sum_l \frac{l\cdot X_l}{|l|^2} e_l \bigg].
  \end{equation}

Now for a divergence vector field $v$ on $\T^2$, recall the Stratonovich-It\^o corrector
  $$S_\theta^{(2)} (v)= 2\kappa \sum_k \theta_k^2\, \Pi\big[ \sigma_k \cdot\nabla \Pi (\sigma_{-k} \cdot\nabla v) \big]. $$
Using the operator $\Pi^\perp$, we have
  \begin{equation}\label{decompositions}
  \Pi(\sigma_{-k}\cdot \nabla v) = \sigma_{-k}\cdot\nabla v- \Pi^\perp (\sigma_{-k}\cdot\nabla v).
  \end{equation}
Noting that, by \eqref{covariance-funct-1} and the fact that $\sigma_k \cdot \nabla \sigma_{-k} \equiv 0$,
  $$2\kappa \sum_k \theta_k^2\, \Pi\big[ \sigma_k \cdot\nabla (\sigma_{-k} \cdot\nabla v) \big] = \kappa\, \Pi [\Delta v]= \kappa\Delta v, $$
where the last step is due to the divergence free property of $v$, hence,
  $$S_\theta^{(2)}(v) = \kappa\Delta v - 2\kappa \sum_k \theta_k^2\, \Pi\big[ \sigma_k \cdot\nabla \Pi^\perp (\sigma_{-k} \cdot\nabla v) \big]. $$
We shall denote the second term on the right-hand side by $S_\theta^\perp (v)$. Therefore, the first assertion in Theorem \ref{thm-Ito-corrector} follows if we can prove
  \begin{equation}\label{alternative-limit}
  \bigg\| S_{\theta^N}^\perp (v)- \frac34 \kappa \Delta v \bigg\|_{H^{s-2-\alpha}} \leq C \frac{\kappa}{N^{\alpha}} \|v \|_{H^s} .
  \end{equation}

Now we assume the divergence free vector field $v$ has the Fourier expansion
  $$v= \sum_{l} v_l\, \sigma_l, $$
where the coefficients $\{v_l: l\in \Z^2_0 \} \subset \mathbb C$ satisfy $\overline{v_l}= v_{-l}$. We begin with finding the exact expression for $S_\theta^\perp (v)$.

\begin{lemma}\label{lem-extra-term}
We have
  \begin{equation}\label{lem-extra-term.1}
  S_\theta^\perp (v)= - 8\pi^2 \kappa \sum_l v_l \Pi\bigg\{ \bigg[ \sum_k \theta_k^2 (a_k \cdot l)^2 (a_l\cdot (k-l)) \frac{k-l}{|k-l|^2} \bigg] e_l \bigg\}.
  \end{equation}
\end{lemma}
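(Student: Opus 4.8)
The plan is to obtain the identity by a direct Fourier computation, tracking one at a time the three operators appearing in $S_\theta^\perp(v)= 2\kappa \sum_k \theta_k^2\, \Pi\big[ \sigma_k \cdot\nabla \Pi^\perp (\sigma_{-k} \cdot\nabla v) \big]$, applied to the expansion $v=\sum_l v_l \sigma_l$ with $\sigma_l=a_l e_l$. It suffices to treat $v$ a trigonometric polynomial, since then every rearrangement below is a finite termwise manipulation and $\Pi$, being linear and bounded, commutes with the sums; the formula \eqref{lem-extra-term.1} for general $v$ then follows from the continuity that is in any case used when proving the estimate in Theorem \ref{thm-Ito-corrector}.

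First, using $\nabla e_l = 2\pi{\rm i}\, l\, e_l$, the identity $e_{-k}e_l=e_{l-k}$ and the symmetry $a_{-k}=a_k$, one finds $\sigma_{-k}\cdot\nabla v = 2\pi{\rm i}\sum_l v_l (a_k\cdot l)\, a_l\, e_{l-k}$ (the term $l=k$ drops out, being the zero mode). Next apply $\Pi^\perp$: by \eqref{Leray-proj-2} its symbol sends the coefficient vector $X_m$ on the mode $e_m$ to $\frac{m\cdot X_m}{|m|^2}m$, and here $e_{l-k}$ carries the vector $X=2\pi{\rm i}\,v_l(a_k\cdot l)a_l$, so
$$\Pi^\perp(\sigma_{-k}\cdot\nabla v)= 2\pi{\rm i}\sum_l v_l (a_k\cdot l)\, \frac{a_l\cdot(l-k)}{|l-k|^2}\,(l-k)\, e_{l-k}.$$
Now apply $\sigma_k\cdot\nabla = a_k e_k\cdot\nabla$: since $(l-k)$ is a constant vector, $\nabla$ hits only $e_{l-k}$, producing $2\pi{\rm i}(a_k\cdot(l-k))e_k e_{l-k}=2\pi{\rm i}(a_k\cdot(l-k))e_l$. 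This is the one step that uses more than the algebra of Fourier modes: because $a_k\in k^\perp$ one has $a_k\cdot k=0$, hence $a_k\cdot(l-k)=a_k\cdot l$, so the product $(a_k\cdot l)(a_k\cdot(l-k))$ collapses to $(a_k\cdot l)^2$. Collecting the constant $(2\pi{\rm i})^2=-4\pi^2$,
$$\sigma_k\cdot\nabla\,\Pi^\perp(\sigma_{-k}\cdot\nabla v) = -4\pi^2 \sum_l v_l (a_k\cdot l)^2\, \frac{a_l\cdot(l-k)}{|l-k|^2}\,(l-k)\, e_l.$$

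Finally, multiply by $2\kappa\theta_k^2$, sum over $k$, and move $\Pi$ together with the $l$-sum to the front by linearity; using $(a_l\cdot(l-k))(l-k)=(a_l\cdot(k-l))(k-l)$ and $|l-k|=|k-l|$ produces exactly \eqref{lem-extra-term.1}. I expect no genuine obstacle beyond careful index bookkeeping and keeping signs straight; the substantive point is the cancellation $a_k\cdot k=0$, which both collapses the two inner products to $(a_k\cdot l)^2$ and is ultimately responsible for the clean closed form of $S_\theta^\perp$.
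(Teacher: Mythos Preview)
Your proof is correct and follows essentially the same route as the paper: a direct Fourier computation applying in turn $\sigma_{-k}\cdot\nabla$, $\Pi^\perp$ via \eqref{Leray-proj-2}, and $\sigma_k\cdot\nabla$, with the key simplification $a_k\cdot(l-k)=a_k\cdot l$ coming from $a_k\cdot k=0$. The only cosmetic difference is that you explicitly justify the rearrangement by reducing to trigonometric polynomials, which the paper leaves implicit.
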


\begin{proof}
We give a proof by using the formula \eqref{Leray-proj-2}; one can also proceed with \eqref{Leray-proj-1}, cf. \cite[Section 5]{FlaLuo21}. We have
  $$\nabla v(x)= \sum_l v_l \nabla \sigma_l(x) = 2\pi {\rm i} \sum_l v_l (a_l \otimes l) e_l(x). $$
Note that $\sigma_{-k}(x)= a_k e_{-k}(x)$; thus
  $$(\sigma_{-k}\cdot\nabla v)(x) = 2\pi {\rm i} \sum_l v_l (a_k \cdot l) a_l e_{l-k}(x). $$
By the first equality in \eqref{Leray-proj-2}, we have
  \begin{equation}\label{lem-extra-term.2}
  \aligned
  \Pi^\perp (\sigma_{-k}\cdot\nabla v)(x)&= 2\pi {\rm i} \sum_l v_l (a_k \cdot l) (a_l \cdot (l-k)) \frac{l-k}{|l-k|^2} e_{l-k}(x) .
  \endaligned
  \end{equation}
As a consequence,
  $$ \aligned
  &\, \big[ \sigma_k\cdot \nabla \Pi^\perp (\sigma_{-k,\alpha}\cdot\nabla v) \big](x)\\
  =&\, 2\pi {\rm i} \sum_l v_l (a_k \cdot l) (a_l \cdot (l-k)) \frac{l-k}{|l-k|^2} e_k(x) a_k \cdot\nabla e_{l-k}(x) \\
  =&\, (2\pi {\rm i})^2 \sum_l v_l (a_k \cdot l) (a_l \cdot (l-k)) \frac{l-k}{|l-k|^2} (a_k \cdot (l-k))  e_k(x) e_{l-k}(x) \\
  =& -4\pi^2 \sum_l v_l (a_k \cdot l)^2 (a_l\cdot (k-l)) \frac{k-l}{|k-l|^2} e_l(x),
  \endaligned $$
where in the last step we have used $a_k \cdot k=0$. This immediately gives us the desired identity.\end{proof}

In the next lemma we compute explicitly the projected quantity in \eqref{lem-extra-term.1}. Let $\angle_{k,l}$ be the angle between two vectors $k$ and $l$.

\begin{lemma}\label{lem-append-exress}
We have
  $$S_\theta^\perp (v)= - 8\pi^2 \kappa \sum_l v_l |l|^2 \bigg[ \sum_k \theta_k^2 \sin^2 (\angle_{k,l}) \frac{(a_l\cdot (k-l))^2}{|k-l|^2} \bigg] \sigma_l. $$
\end{lemma}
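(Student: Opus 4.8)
The plan is to start from the identity \eqref{lem-extra-term.1} proved in Lemma \ref{lem-extra-term} and simply execute the Helmholtz projection $\Pi$ Fourier mode by Fourier mode, exploiting the two-dimensional geometry. Fix $l\in\Z^2_0$ and abbreviate the vector coefficient of $e_l$ inside the braces as
$$w_l := \sum_k \theta_k^2 (a_k\cdot l)^2 (a_l\cdot(k-l))\,\frac{k-l}{|k-l|^2}\ \in\ \mathbb C^2.$$
First I would record that this $k$-sum converges absolutely: using $(a_k\cdot l)^2\le|l|^2$ and $|a_l\cdot(k-l)|\le|k-l|$, each summand is bounded in modulus by $|l|^2\theta_k^2$, and $\sum_k\theta_k^2=1$; this also legitimizes the interchange of the $k$- and $l$-summations that follows. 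Then, by the Fourier expression for the projection (the displayed formula for $\Pi X$ in Section \ref{subs-notation}), $\Pi\{w_l\,e_l\}=\big(I_2-\tfrac{l\otimes l}{|l|^2}\big)w_l\,e_l$.

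Next I would use that in dimension two $\{l/|l|,\,a_l\}$ is an orthonormal basis of $\R^2$ (recall $a_l=\pm l^\perp/|l|$), so that $I_2=\tfrac{l\otimes l}{|l|^2}+a_l\otimes a_l$ and hence $\Pi\{w_l\,e_l\}=(a_l\cdot w_l)\,a_l e_l=(a_l\cdot w_l)\,\sigma_l$. Pairing $w_l$ with $a_l$ converts the vector $\tfrac{k-l}{|k-l|^2}$ into the scalar $\tfrac{a_l\cdot(k-l)}{|k-l|^2}$, giving
$$a_l\cdot w_l=\sum_k \theta_k^2 (a_k\cdot l)^2\,\frac{(a_l\cdot(k-l))^2}{|k-l|^2}.$$
The final ingredient is the angular rewriting: since $a_k$ is a unit vector orthogonal to $k$, the angle between $a_k$ and $l$ differs from $\angle_{k,l}$ by $\pi/2$, so $a_k\cdot l=\pm|l|\sin(\angle_{k,l})$ and therefore $(a_k\cdot l)^2=|l|^2\sin^2(\angle_{k,l})$. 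Substituting this into the expression for $a_l\cdot w_l$ and plugging back into \eqref{lem-extra-term.1} produces exactly the claimed formula.

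I do not expect a genuine obstacle here; the argument is a short rearrangement. The only points deserving a moment's care are the absolute convergence that justifies swapping the two sums (handled by the bound above) and the sign ambiguities in $a_k$ and $a_l$, which are harmless because only the squares $(a_k\cdot l)^2$ and $(a_l\cdot(k-l))^2$ appear and because $a_l\otimes a_l$ is insensitive to the sign of $a_l$. If one prefers, the same computation can be carried out directly from \eqref{Leray-proj-2} as in \cite[Section 5]{FlaLuo21}, but working with the Fourier symbol of $\Pi$ makes the two-dimensional simplification $I_2-\tfrac{l\otimes l}{|l|^2}=a_l\otimes a_l$ transparent.
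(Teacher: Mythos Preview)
Your proposal is correct and follows essentially the same approach as the paper: both start from \eqref{lem-extra-term.1}, compute the Helmholtz projection of each Fourier mode $w_l e_l$, and use $(a_k\cdot l)^2=|l|^2\sin^2(\angle_{k,l})$. The only cosmetic difference is that the paper carries out the projection via the CONB expansion $\Pi(V_l)=\sum_j\langle V_l,\sigma_{-j}\rangle\sigma_j$ and the orthogonality $\int_{\T^2}e_l\overline{e_j}\,\d x=\delta_{l,j}$, whereas you use the Fourier multiplier $I_2-\tfrac{l\otimes l}{|l|^2}=a_l\otimes a_l$ directly; these are two equivalent ways of writing the same rank-one projection in $\R^2$.
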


\begin{proof}
First, noting that $\{a_k, \frac{k}{|k|}\}$ is an ONB of $\R^2$ for any $k\in \Z^2_0$, we have
  $$(a_k \cdot l)^2= |l|^2 - \bigg(l\cdot \frac{k}{|k|} \bigg)^2 = |l|^2 \bigg[1- \bigg(\frac{l\cdot k}{|l|\, |k|} \bigg)^2\bigg] = |l|^2 \sin^2 \angle_{k,l}. $$
Therefore, we can rewrite \eqref{lem-extra-term.1} as
  \begin{equation}\label{lem-extra-term.3}
  \aligned
  S_\theta^\perp (v) &= - 8\pi^2 \kappa \sum_l v_l |l|^2 \Pi\bigg\{ \bigg[ \sum_k \theta_k^2 \sin^2 (\angle_{k,l}) (a_l\cdot (k-l)) \frac{k-l}{|k-l|^2} \bigg] e_l \bigg\} \\
  &= - 8\pi^2 \kappa \sum_l v_l |l|^2 \sum_k \theta_k^2 \sin^2 (\angle_{k,l}) \frac{a_l\cdot (k-l)}{|k-l|^2} \Pi((k-l) e_l),
  \endaligned
  \end{equation}
where the second step follows from the linearity of $\Pi$.

Since $\{\sigma_j\}_{j\in \Z^2_0}$ is a CONB in the space of square integrable and divergence free vector fields on $\T^2$ with zero mean, one has $\Pi((k-l) e_l) = \sum_j \<(k-l) e_l, \sigma_{-j} \> \sigma_j= ((k-l)\cdot a_l) \sigma_l$. Substituting this result into \eqref{lem-extra-term.3} leads to the desired equality.
\end{proof}

Recall the sequence $\theta^N \in \ell^2$ defined in \eqref{theta-N-def}. The next result is a crucial step for proving the limit \eqref{alternative-limit}.

\begin{proposition}\label{prop-2}
There exists some constant $C>0$ such that for any $l\in \Z^2_0$ and for all $N \geq 1$, it holds
  $$\bigg| \sum_{k} \big(\theta^N_k \big)^2 \sin^2(\angle_{k,l}) \frac{(a_l\cdot (k-l))^2}{|k-l|^2}- \frac38 \bigg| \leq C \frac{|l|}N. $$
\end{proposition}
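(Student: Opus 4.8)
The plan is to reduce the weighted sum, after an elementary algebraic simplification, to an equidistribution (Riemann--sum) estimate for lattice points in the dyadic annulus $A_N=\{x\in\R^2:N\le|x|\le 2N\}$.

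First I would dispose of the trivial cases. Since $\sin^2(\angle_{k,l})\le1$ and $(a_l\cdot(k-l))^2\le|k-l|^2$, the left-hand sum lies in $[0,\|\theta^N\|_{\ell^2}^2]=[0,1]$, so its distance to $\tfrac38$ is at most $1$; since $|l|\ge1$, this already yields the bound (with $C\ge4$, say) whenever $|l|\ge N/4$ or $N$ is bounded. Hence I may assume $|l|\le N/4$, so that $3N/4\le|k-l|\le9N/4$ for every $k$ with $N\le|k|\le2N$. The key simplification is then: in $2$D, $a_l=\pm l^\perp/|l|$, hence $a_l\cdot l=0$ and $a_l\cdot(k-l)=a_l\cdot k$, while $(a_l\cdot k)^2=(l^\perp\cdot k)^2/|l|^2=\big(|l|^2|k|^2-(l\cdot k)^2\big)/|l|^2=|k|^2\sin^2(\angle_{k,l})$. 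Therefore
\[
\sin^2(\angle_{k,l})\,\frac{(a_l\cdot(k-l))^2}{|k-l|^2}=\frac{(a_l\cdot k)^4}{|k|^2|k-l|^2}=\sin^4(\angle_{k,l})\,\frac{|k|^2}{|k-l|^2}.
\]
Because $\big||k-l|^2-|k|^2\big|=|2k\cdot l-|l|^2|\lesssim N|l|$ and $|k-l|\gtrsim N$, the factor $|k|^2/|k-l|^2$ differs from $1$ by $O(|l|/N)$ uniformly in $k$; as $\sin^4\le1$ and $\sum_k(\theta^N_k)^2=1$, replacing it by $1$ costs only $O(|l|/N)$, and it remains to prove $\big|\sum_k(\theta^N_k)^2\sin^4(\angle_{k,l})-\tfrac38\big|\lesssim1/N$.

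For this I would invoke equidistribution of $\Z^2$ in the annulus. Writing the sum as $\Lambda_N^{-2}\sum_{N\le|k|\le2N}|k|^{-2\gamma}\sin^4(\angle_{k,l})$, I would compare both it and $\Lambda_N^2=\sum_{N\le|k|\le2N}|k|^{-2\gamma}$ with the corresponding integrals over $A_N$. For $F(x)=h(\angle_{x,l})|x|^{-2\gamma}$ with $h$ a fixed trigonometric polynomial, one has $|F|\lesssim N^{-2\gamma}$ and $|\nabla F|\lesssim_{h,\gamma}N^{-2\gamma-1}$ on $A_N$ (the gradient of the angle has size $|x|^{-1}$, uniformly in $l$), so the standard unit-cell argument gives $\sum_{\Z^2\cap A_N}F=\int_{A_N}F+O(N^{1-2\gamma})$, the error being an interior Lipschitz part $\lesssim\|\nabla F\|_{L^\infty}|A_N|\lesssim N^{1-2\gamma}$ plus a boundary layer (width $O(1)$, length $O(N)$, $|F|\lesssim N^{-2\gamma}$) of the same order. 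In polar coordinates the angular integral is independent of $l$, so $\int_{A_N}F=\big(\int_0^{2\pi}h(\phi)\,\d\phi\big)\int_N^{2N}r^{1-2\gamma}\,\d r$; taking $h=\sin^4$, with $\int_0^{2\pi}\sin^4\phi\,\d\phi=2\pi\cdot\tfrac38$, for the numerator and $h\equiv1$ for $\Lambda_N^2$, and noting $\int_N^{2N}r^{1-2\gamma}\,\d r=c_\gamma N^{2-2\gamma}$ with $c_\gamma>0$, the ratio equals $\tfrac38+O(N^{1-2\gamma}/N^{2-2\gamma})=\tfrac38+O(1/N)$. Combined with the previous paragraph this proves the proposition.

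The one substantive point is this last equidistribution estimate: one must check that the oscillation of $F$ over a unit cell is $O(N^{-2\gamma-1})$ with a constant independent of $l$ --- which rests on $|\nabla_x\angle_{x,l}|\sim|x|^{-1}$ uniformly in $l$ --- and that the cells meeting $\partial A_N$ contribute only $O(N^{1-2\gamma})$ against the main term $\sim N^{2-2\gamma}$. Everything else is routine.
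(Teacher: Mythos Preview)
Your proof is correct and follows essentially the same strategy as the paper: reduce to a Riemann--sum comparison of $\sum_k(\theta^N_k)^2\sin^4(\angle_{k,l})$ with the corresponding integral over the annulus $\{N\le|x|\le 2N\}$, then compute $\frac{1}{2\pi}\int_0^{2\pi}\sin^4\phi\,\d\phi=\tfrac38$. The paper packages the reduction as two lemmas (Lemma~\ref{lem-differ} replacing $(a_l\cdot(k-l))^2/|k-l|^2$ by $(a_l\cdot k)^2/|k|^2$ via a matrix-norm bound, and Lemma~\ref{lem-proof} for the Riemann sum), and then computes the integral in rotated coordinates; your path is slightly more direct in that you exploit the \emph{exact} identity $a_l\cdot l=0$ (so $a_l\cdot(k-l)=a_l\cdot k$ with no error) and only need to replace $|k|^2/|k-l|^2$ by $1$. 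This is a neat 2D-specific shortcut, but otherwise the argument and the quantitative bookkeeping are the same.
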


Suppose we have already proved this result; we now turn to prove \eqref{alternative-limit}.

\begin{proof}[Proof of \eqref{alternative-limit}]
For any $N\geq 1$, by Lemma \ref{lem-append-exress},
  $$ S_{\theta^N}^\perp (v)= - 8\pi^2 \kappa \sum_l v_l |l|^2 \bigg[ \sum_{k} \big(\theta^N_k \big)^2 \sin^2(\angle_{k,l}) \frac{(a_l\cdot (k-l))^2}{|k-l|^2} \bigg] \sigma_l.$$
Since $v=\sum_l v_l \sigma_l$ one has
  $$\frac34 \kappa \Delta v= -3\pi^2\kappa \sum_l v_l |l|^2 \sigma_l, $$
therefore,
  $$\aligned
  &\, S_{\theta^N}^\perp (v) -\frac34 \kappa \Delta v = - 8\pi^2 \kappa \sum_l v_l |l|^2 \bigg[ \sum_{k} \big(\theta^N_k \big)^2 \sin^2(\angle_{k,l}) \frac{(a_l\cdot (k-l))^2}{|k-l|^2} - \frac38 \bigg] \sigma_l .
  \endaligned $$
Fix any $L>0$; we have
  $$\bigg\| S_{\theta^N}^\perp (v) -\frac34 \kappa \Delta v \bigg\|_{H^{s-2-\alpha}}^2 = K_{L,1} + K_{L,2},$$
where ($C=64\pi^4$)
  $$\aligned
  K_{L,1} &= C\kappa^2 \sum_{|l|\leq L} |v_l|^2 |l|^{2(s-\alpha)} \bigg| \sum_{k} \big(\theta^N_k \big)^2 \sin^2(\angle_{k,l}) \frac{(a_l\cdot (k-l))^2}{|k-l|^2} - \frac38 \bigg|^2, \\
  K_{L,2} &= C\kappa^2 \sum_{|l|> L} |v_l|^2 |l|^{2(s-\alpha)} \bigg| \sum_{k} \big(\theta^N_k \big)^2 \sin^2(\angle_{k,l}) \frac{(a_l\cdot (k-l))^2}{|k-l|^2} - \frac38 \bigg|^2 .
  \endaligned$$

Next we estimate the two quantities above. By Proposition \ref{prop-2},
  $$\aligned
  K_{L,1} &\leq C' \kappa^2 \sum_{|l|\leq L} |v_l|^2 \frac{|l|^{2(s-\alpha+1)}}{N^2} \leq C'\kappa^2 \frac{L^{2(1-\alpha)} }{N^2} \sum_{|l|\leq L} |v_l|^2 |l|^{2s} \leq C' \kappa^2 \frac{L^{2(1-\alpha)}}{N^2} \|v \|_{H^s}^2.
  \endaligned$$
Moreover, since $|a_l|=1$ and $\|\theta^N \|_{\ell^2}=1$, one has
  $$\aligned
  K_{L,2} &\leq C\kappa^2 \sum_{|l|> L} |v_l|^2|l|^{2(s-\alpha)} \bigg(\sum_{k} \big(\theta^N_k \big)^2  + \frac38 \bigg)^2\\
  &\leq 4C\kappa^2 \sum_{|l|> L} |v_l|^2 \frac{|l|^{2s}}{L^{2\alpha}} \leq C' \kappa^2 \frac{1}{L^{2\alpha}} \|v \|_{H^s}^2.
  \endaligned $$
Summarizing these estimates and taking $L=N$, we complete the proof of \eqref{alternative-limit}.
\end{proof}

Next we prove Proposition \ref{prop-2} for which we need some preparations.

\begin{lemma}\label{lem-differ}
For any $l\in \Z^2_0$ and all $N\geq 1$ it holds
  $$\bigg| \sum_{k} \big(\theta^N_k \big)^2 \sin^2(\angle_{k,l}) \bigg( \frac{(a_l\cdot (k-l))^2}{|k-l|^2} - \frac{(a_l\cdot k)^2}{|k|^2} \bigg) \bigg| \leq \frac{4|l|}N. $$
\end{lemma}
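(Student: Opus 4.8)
The plan is to first use that $a_l$ is orthogonal to $l$ (by construction $a_l\in l^\perp$), so that $a_l\cdot(k-l)=a_l\cdot k$. Consequently the bracketed difference collapses to
\[
\frac{(a_l\cdot (k-l))^2}{|k-l|^2} - \frac{(a_l\cdot k)^2}{|k|^2} = (a_l\cdot k)^2\bigg(\frac{1}{|k-l|^2}-\frac1{|k|^2}\bigg) = (a_l\cdot k)^2\,\frac{2k\cdot l-|l|^2}{|k|^2\,|k-l|^2},
\]
so in effect only the denominator $|k-l|^2$ has to be compared with $|k|^2$.

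I would then split according to the size of $|l|$. If $|l|>N/4$ the estimate is trivial: each summand is bounded in modulus by $1$, since $|\sin^2(\angle_{k,l})|\leq1$ and both $\frac{(a_l\cdot(k-l))^2}{|k-l|^2}$ and $\frac{(a_l\cdot k)^2}{|k|^2}$ lie in $[0,1]$; hence the whole sum is at most $\sum_k(\theta^N_k)^2=\|\theta^N\|_{\ell^2}^2=1<4|l|/N$. (The a priori indeterminate term $k=l$, if it occurs in the support of $\theta^N$, is read as $0$, consistently with the vanishing of $a_l\cdot l$ that produced these expressions.)

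The substantive case is $|l|\leq N/4$. Then every $k$ in the support $\{N\leq|k|\leq2N\}$ of $\theta^N$ satisfies $|l|\leq N/4\leq|k|/4$, whence $|k-l|\geq|k|-|l|\geq\frac34|k|$ and $2|k|\,|l|+|l|^2\leq\frac94|k|\,|l|$. Inserting $(a_l\cdot k)^2\leq|k|^2$ and $|2k\cdot l-|l|^2|\leq2|k|\,|l|+|l|^2$ into the identity above, the $k$-th term (before the $\sin^2$ weight) is bounded in absolute value by
\[
\frac{|2k\cdot l-|l|^2|}{|k-l|^2}\;\leq\;\frac{\frac94|k|\,|l|}{\frac9{16}|k|^2}\;=\;\frac{4|l|}{|k|}\;\leq\;\frac{4|l|}{N},
\]
and multiplying by $|\sin^2(\angle_{k,l})|\leq1$ and summing against $\sum_k(\theta^N_k)^2=1$ gives the claim.

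I do not expect any genuine obstacle; the only point requiring care is the bookkeeping of constants, which is why the threshold is taken at $|l|=N/4$ and the elementary inequalities are tuned to deliver exactly the factor $4$. An alternative route, not using the orthogonality remark, is to apply the mean value theorem to the $0$-homogeneous map $m\mapsto(a_l\cdot m)^2/|m|^2$ along the segment from $k-l$ to $k$ (which stays away from the origin once $|l|$ is small compared with $|k|$), using $|\nabla_m[(a_l\cdot m)^2/|m|^2]|=|\sin(2\angle_{a_l,m})|/|m|\leq1/|m|$; this is slightly messier but yields the same conclusion.
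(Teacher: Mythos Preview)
Your proof is correct, and the key observation that $a_l\perp l$ (so $a_l\cdot(k-l)=a_l\cdot k$) is a nice simplification that the paper does not exploit. With it, the difference reduces to $(a_l\cdot k)^2\big(|k-l|^{-2}-|k|^{-2}\big)$ and your case split at $|l|=N/4$, together with the elementary inequalities $|k-l|\geq\frac34|k|$ and $2|k||l|+|l|^2\leq\frac94|k||l|$, delivers exactly the constant $4$.

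The paper proceeds differently: it does not use $a_l\perp l$ at all, but instead bounds
\[
\bigg|\frac{(a_l\cdot(k-l))^2}{|k-l|^2}-\frac{(a_l\cdot k)^2}{|k|^2}\bigg|\le\bigg\|\frac{(k-l)\otimes(k-l)}{|k-l|^2}-\frac{k\otimes k}{|k|^2}\bigg\|\le 2\,\bigg|\frac{k-l}{|k-l|}-\frac{k}{|k|}\bigg|\le\frac{4|l|}{|k|},
\]
the last step via the identity $\frac{k-l}{|k-l|}-\frac{k}{|k|}=\big(\frac{1}{|k-l|}-\frac1{|k|}\big)(k-l)-\frac{l}{|k|}$ and the triangle inequality. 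This works for any unit vector in place of $a_l$ and needs no case split on $|l|$; it is essentially the matrix-norm version of the mean value argument you sketch at the end. Your route is more algebraically explicit and exploits the structure at hand; the paper's route is slightly more robust (it immediately covers the 3D cross terms $(a_{l,i}\cdot m)(a_{l,j}\cdot m)/|m|^2$ without further thought) and avoids the threshold bookkeeping.
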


\begin{proof}
We have
  $$\bigg| \frac{(a_l\cdot (k-l))^2}{|k-l|^2} - \frac{(a_l\cdot k)^2}{|k|^2} \bigg| \leq \bigg\|\frac{(k-l)\otimes (k-l)}{|k-l|^2} - \frac{k\otimes k}{|k|^2} \bigg\| , $$
where $\|\cdot \|$ is the operator norm of matrices. Note that
  $$\frac{(k-l)\otimes (k-l)}{|k-l|^2} - \frac{k\otimes k}{|k|^2} = \frac{k-l}{|k-l|} \otimes \bigg(\frac{k-l}{|k-l|} - \frac{k}{|k|} \bigg) + \bigg(\frac{k-l}{|k-l|} - \frac{k}{|k|} \bigg) \otimes \frac{k}{|k|},$$
and thus
  $$\bigg\| \frac{(k-l)\otimes (k-l)}{|k-l|^2} - \frac{k\otimes k}{|k|^2} \bigg\| \leq 2\bigg| \frac{k-l}{|k-l|} - \frac{k}{|k|} \bigg|.$$
Next, since
  $$\frac{k-l}{|k-l|} - \frac{k}{|k|}= \bigg(\frac1{|k-l|} - \frac1{|k|}\bigg)(k-l) - \frac{l}{|k|}, $$
one has
  $$\bigg| \frac{k-l}{|k-l|} - \frac{k}{|k|} \bigg| \leq \frac{\big| |k| -|k-l| \big|}{ |k|} + \frac{|l|}{|k|} \leq 2 \frac{|l|}{|k|}. $$
Combining the above estimates we obtain
  $$\bigg| \frac{(a_l\cdot (k-l))^2}{|k-l|^2} - \frac{(a_l\cdot k)^2}{|k|^2} \bigg| \leq \bigg\|\frac{(k-l)\otimes (k-l)}{|k-l|^2} - \frac{k\otimes k}{|k|^2} \bigg\| \leq 4\frac{|l|}{|k|} . $$

Finally, recalling the definition of $\theta^N$ in \eqref{theta-N-def}, it holds
  $$\aligned &\, \sum_{k} \big(\theta^N_k \big)^2 \sin^2(\angle_{k,l}) \bigg| \frac{(a_l\cdot (k-l))^2}{|k-l|^2} - \frac{(a_l\cdot k)^2}{|k|^2} \bigg| \leq \sum_{N\leq |k|\leq 2N} \big(\theta^N_k \big)^2 \times 4\frac{|l|}{|k|} \leq \frac{4|l|}{N}.
  \endaligned $$
The proof is complete.
\end{proof}

We also need the following result.

\begin{lemma}\label{lem-proof}
Let $\{\theta^N \}_{N\geq 1} \subset \ell^2$ be given as in \eqref{theta-N-def}. There exists a $C>0$, independent of $N\geq 1$ and $l\in \Z^2_0$, such that
  \begin{equation*}
  \bigg| \sum_{k} \big(\theta^N_k \big)^2 \sin^2(\angle_{k,l}) \frac{(a_l\cdot k)^2}{|k|^2} - \frac{1}{\Lambda_N^2} \int_{\{N\leq |x|\leq 2N\}} \frac1{|x|^{2\gamma}} \sin^2(\angle_{x,l})  \frac{(a_l\cdot x)^2}{|x|^2} \,\d x \bigg| \leq \frac{C}N.
  \end{equation*}
\end{lemma}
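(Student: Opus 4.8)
The plan is to read the claim as a Riemann-sum estimate: after substituting the explicit form of $\theta^N$, it reduces to comparing a lattice sum with an integral of a degree-zero homogeneous weight times $|x|^{-2\gamma}$. Recalling \eqref{theta-N-def}, $(\theta^N_k)^2 = \Lambda_N^{-2}|k|^{-2\gamma}\mathbf{1}_{\{N\le|k|\le 2N\}}$, and since $\{a_l, l/|l|\}$ is an orthonormal basis of $\R^2$ we have $\frac{(a_l\cdot x)^2}{|x|^2}=1-\frac{(x\cdot l)^2}{|x|^2|l|^2}=\sin^2(\angle_{x,l})$; hence, with
\[
g_l(x):=\frac1{|x|^{2\gamma}}\,\sin^4(\angle_{x,l}),\qquad x\in\R^2\setminus\{0\},
\]
the sum and the integral in the lemma equal $\Lambda_N^{-2}$ times $\sum_{k\in\Z^2_0,\,N\le|k|\le 2N} g_l(k)$ and $\int_{\{N\le|x|\le 2N\}}g_l(x)\,\d x$, respectively. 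Since $\{N\le|k|\le 2N\}$ contains $\sim N^2$ lattice points, each contributing $\sim N^{-2\gamma}$ to $\Lambda_N^2$, we have $\Lambda_N^2\sim N^{2-2\gamma}$ (equivalently $\|\theta^N\|_{\ell^\infty}\sim N^{-1}$, as noted below \eqref{theta-N-def}). So the lemma follows once I prove, with a constant independent of $N$ and of $l$,
\[
\Bigl|\sum_{k\in\Z^2_0,\,N\le|k|\le 2N} g_l(k)-\int_{\{N\le|x|\le 2N\}}g_l(x)\,\d x\Bigr|\lesssim N^{1-2\gamma}.
\]

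I would prove this bound by the standard unit-cell decomposition. Cover the annulus $A_N=\{N\le|x|\le 2N\}$ by the cells $Q_k=k+(-\tfrac12,\tfrac12]^2$, $k\in\Z^2$, calling $k$ \emph{interior} if $Q_k\subset A_N$ and \emph{boundary} if $Q_k$ meets $\partial A_N$; every $k$ with $N\le|k|\le 2N$ is then interior or boundary, every interior $k$ lies in $A_N$, and the number of boundary cells is $\lesssim N$ since $\partial A_N$ is the union of two circles of radii $N$ and $2N$. Decomposing $\int_{A_N}g_l=\sum_k\int_{Q_k\cap A_N}g_l$, the discrepancy above splits into the interior contributions $\sum_{\mathrm{int}}\bigl(g_l(k)-\int_{Q_k}g_l\bigr)$, each bounded by $\tfrac{\sqrt2}{2}\sup_{Q_k}|\nabla g_l|$ by the mean value theorem (as $|Q_k|=1$), plus boundary contributions, each crudely bounded by $\lesssim N^{-2\gamma}$ (estimating $|g_l(k)|$ and $\int_{Q_k\cap A_N}|g_l|$ separately, using $|x|\asymp N$ on such cells). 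Granting the gradient estimate $|\nabla g_l(x)|\lesssim|x|^{-2\gamma-1}$ uniformly in $l$, summing over the $\lesssim N^2$ interior cells and $\lesssim N$ boundary cells yields $N^2\cdot N^{-2\gamma-1}+N\cdot N^{-2\gamma}\lesssim N^{1-2\gamma}$ (the finitely many small $N$ are absorbed into the constant).

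The one genuinely delicate point — and the step I expect to be the main obstacle — is the $l$-uniform gradient bound. Writing $g_l=h\,\phi_l$ with $h(x)=|x|^{-2\gamma}$ and $\phi_l(x)=\sin^4(\angle_{x,l})=\bigl(1-(x\cdot l)^2/(|x|^2|l|^2)\bigr)^2$, one has $|\nabla h(x)|=2\gamma|x|^{-2\gamma-1}$ and $|\phi_l|\le 1$, while $\phi_l$ is smooth on $\R^2\setminus\{0\}$, homogeneous of degree $0$, and depends on $l$ only through the unit vector $l/|l|$; by rotation invariance its restriction to the unit circle is, up to a rotation, the same for every $l$, so $\sup_{|x|=1}|\nabla\phi_l(x)|$ is an absolute constant and $|\nabla\phi_l(x)|\lesssim 1/|x|$ uniformly in $l$. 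Hence $|\nabla g_l(x)|\le|\nabla h(x)|\,|\phi_l(x)|+h(x)\,|\nabla\phi_l(x)|\lesssim|x|^{-2\gamma-1}$ uniformly in $l$, which closes the estimate and, together with the previous paragraphs, proves the lemma. The essential trick is to keep $g_l$ in degree-zero-homogeneous form, so that the entire $l$-dependence is carried by a quantity controlled by the compactness of the unit circle and is, in fact, rotation-invariant.
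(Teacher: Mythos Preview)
Your proof is correct and follows essentially the same unit-cell Riemann-sum argument as the paper: bound the per-cell discrepancy by a Lipschitz/gradient estimate on the integrand that is uniform in $l$, and handle the annulus boundary by counting $O(N)$ cells each contributing $O(N^{-2\gamma})$, then divide by $\Lambda_N^2\sim N^{2-2\gamma}$. Your preliminary simplification $(a_l\cdot x)^2/|x|^2=\sin^2(\angle_{x,l})$ (reducing the weight to $\sin^4(\angle_{x,l})$) and the homogeneity/rotation-invariance argument for the $l$-uniform gradient bound are a bit slicker than the paper's direct triangle-inequality estimates on $|g_l(k)-g_l(x)|$, but the method is the same.
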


\begin{remark}
In the 2D case, for any $l\in \Z^2_0$, $\{a_l, \frac{l}{|l|}\}$ is an ONB of $\R^2$; it is easy to see that $\frac{(a_l\cdot x)^2}{|x|^2} =\sin^2(\angle_{x,l}) $, thus the expression of integrand can be simplified. However, for the 3D case discussed in Remark \ref{rem-corrector-3D}, such simplification no longer holds; thus we decide to do the computations below using this more complicated expression, in order to shed some light on the computations in the 3D case.
\end{remark}

\begin{proof}[Proof of Lemma \ref{lem-proof}]
For any $l\in \Z^2_0$, we define the function
  $$g_l(x) = \sin^2(\angle_{x,l}) \frac{(a_l\cdot x)^2}{|x|^2}, \quad x\in \R^2,\, x\neq 0; $$
clearly, $\|g_l \|_\infty \leq 1$. We shall prove that there exists $C>0$, independent of $N\geq 1$ and $l\in \Z^2_0$, such that
  \begin{equation}\label{proof-lem.1}
  \aligned
  \bigg|\sum_{k} \big(\theta^N_k \big)^2 g_l(k) - \frac1{\Lambda_N^2}\int_{\{N\leq |x|\leq 2N\}} \frac{g_l(x)}{|x|^{2\gamma}}  \,\d x \bigg| \leq \frac{C}N .
  \endaligned
  \end{equation}
Let $\square(k)$ be the unit square centered at $k\in \Z^2$ such that all sides have length 1 and are parallel to the coordinate axes. Note that for all $k,l\in \Z^2$, $k\neq l$, the interiors of $\square(k)$ and $\square(l)$ are disjoint. Let $S_N = \bigcup_{N\leq |k|\leq 2N} \square(k)$; then,
  $$\bigg|\sum_{k} \big(\theta^N_k \big)^2 g_l(k) - \frac{1}{\Lambda_N^2} \int_{S_N} \frac{g_l(x)}{|x|^{2\gamma}} \,\d x\bigg| \leq \frac{1}{\Lambda_N^2} \sum_{N\leq |k|\leq 2N} \int_{\square(k)} \bigg|\frac{g_l(k)}{|k|^{2\gamma}} - \frac{g_l(x)}{|x|^{2\gamma}} \bigg|\, \d x.$$
For all $|k|\geq N\geq 1$ and $x\in \square(k)$, we have $|x-k|\leq \sqrt{2}/2$ and $|x|\geq 1/2$, thus
  $$\bigg|\frac{g_l(k)}{|k|^{2\gamma}} - \frac{g_l(x)}{|x|^{2\gamma}} \bigg| \leq \bigg|\frac{1}{|k|^{2\gamma}} - \frac{1}{|x|^{2\gamma}} \bigg| + \frac{|g_l(k) -g_l(x)|}{|x|^{2\gamma}} \leq C_1 \bigg(\frac{1}{|k|^{2\gamma+1}} + \frac{|g_l(k) -g_l(x)|}{|k|^{2\gamma}}\bigg) $$
for some constant $C_1$ depending on $\gamma>0$. Next,
  $$\aligned
  |g_l(k) -g_l(x)| &\leq |\sin^2(\angle_{k,l}) -\sin^2(\angle_{x,l})| + \bigg| \frac{(a_l\cdot k)^2}{|k|^2} - \frac{(a_l\cdot x)^2}{|x|^2} \bigg| \\
  &\leq 2|\sin(\angle_{k,l}) -\sin(\angle_{x,l})| + \bigg\| \frac{k\otimes k}{|k|^2} - \frac{x\otimes x}{|x|^2} \bigg\| \\
  &\leq 2| \angle_{k,l} -\angle_{x,l}| + 2 \bigg| \frac{k}{|k|} - \frac{x}{|x|}\bigg|.
  \endaligned $$
Since $x\in \square(k)$ and $|k|\geq N \geq 1$, we can find a constant $C_2>0$, independent of $l\in \Z^2_0$ and $N\geq 1$, such that
  $$|g_l(k) -g_l(x)| \leq \frac{C_2}{|k|}. $$
Summarizing the above discussions, we obtain
  $$\aligned
  \bigg|\sum_{k} \big(\theta^N_k \big)^2 g_l(k) - \frac{1}{\Lambda_N^2} \int_{S_N} \frac{g_l(x)}{|x|^{2\gamma}} \,\d x\bigg| &\leq \frac{1}{\Lambda_N^2}  \sum_{N\leq |k|\leq 2N} \int_{\square(k)} \frac{C_3}{|k|^{2\gamma+1}}\,\d x\\
  &\leq \frac {C_3}{N \Lambda_N^2} \sum_{N\leq |k|\leq 2N} \frac{1}{|k|^{2\gamma}} = \frac {C_3}N.
  \endaligned $$

Note that there is a small difference between the sets $\{N\leq |x|\leq 2N\}$ and $S_N$, but, in the same way, one can show that
  $$\bigg|\int_{\{N\leq |x|\leq 2N\}} \frac{g_l(x)}{|x|^{2\gamma}} \,\d x - \int_{S_N} \frac{g_l(x)}{|x|^{2\gamma}} \,\d x\bigg| \leq \frac CN \Lambda_N^2.$$
Indeed, for any $x\in \square(k)$ with $N\leq |k| \leq 2N$, one has $N-1 \leq |x| \leq 2N+1$. Therefore,
  $$S_N = \bigcup_{N\leq |k| \leq 2N} \square(k) \subset \{N-1 \leq |x| \leq 2N+1 \} =: T_N. $$
One also has
  $$R_N:= \{N+1 \leq |x| \leq 2N-1 \} \subset S_N.$$
Let $A\Delta B$ be the symmetric difference of subsets $A,B\subset \R^2$; then,
  $$\aligned
  &\, \bigg|\int_{\{N\leq |x|\leq 2N\}} \frac{g_l(x)}{|x|^{2\gamma}} \,\d x - \int_{S_N} \frac{g_l(x)}{|x|^{2\gamma}} \,\d x\bigg| \leq \int_{S_N\Delta \{N\leq |x|\leq 2N\}} \frac{g_l(x)}{|x|^{2\gamma}} \,\d x  \\
  \leq &\, \int_{S_N\Delta \{N\leq |x|\leq 2N\}} \frac{1}{|x|^{2\gamma}} \,\d x \leq \int_{T_N\setminus R_N} \frac{1}{|x|^{2\gamma}} \,\d x \leq \frac{C_4}{N^{2\gamma-1}} \leq \frac{C_5}{N} \Lambda_N^2,
  \endaligned $$
where the last step follows from
  $$\aligned
  \Lambda_N^2 &= \sum_{N\leq |k| \leq 2N} \frac1{|k|^{2\gamma}} \geq \frac1{(2N)^{2\gamma}}\, \#\{k\in \Z^2_0: N\leq |k| \leq 2N \} \geq \frac{C_6}{N^{2\gamma -2}}.
  \endaligned $$
It is easy to see that the constants $C_4, C_5$ and $C_6$ depend only on $\gamma$. The proof is complete.
\end{proof}

We are not ready to provide the

\begin{proof}[Proof of Proposition \ref{prop-2}]
In view of Lemma \ref{lem-proof}, we define
  $$ J_N = \frac{1}{\Lambda_N^2} \int_{\{N\leq |x|\leq 2N\}} \frac1{|x|^{2\gamma}} \sin^2(\angle_{x,l})  \frac{(a_l\cdot x)^2}{|x|^2} \,\d x .$$
To compute $J_N$, we consider the new coordinate system $(y_1, y_2)$ in which the coordinate axes are $a_l$ and $\frac{l}{|l|}$, respectively. Let $U$ be the orthogonal transformation matrix: $x=Uy$. For $i=1,2$, let ${\rm e}_i\in \R^2$ be such that ${\rm e}_{i,j}= \delta_{i,j}$, $1\leq j\leq 2$. We have
  $$a_l= U {\rm e}_1 \quad \mbox{and} \quad \frac{l}{|l|} = U {\rm e}_2.$$
Now $\angle_{x,l} = \angle_{Uy,U{\rm e}_2} = \angle_{y,{\rm e}_2}$ and
  \begin{equation}\label{proof.2}
  \aligned
  J_N &= \frac{1}{\Lambda_N^2} \int_{\{N\leq |y|\leq 2N\}} \frac1{|y|^{2\gamma}} \sin^2(\angle_{y,{\rm e}_2})\,  \frac{(U{\rm e}_1 \cdot Uy)^2}{|y|^2} \,\d y \\
  &= \frac{1}{\Lambda_N^2} \int_{\{N\leq |y|\leq 2N\}} \frac1{|y|^{2\gamma}} \sin^2(\angle_{y,{\rm e}_2})\, \frac{y_1^2}{|y|^2} \,\d y .
  \endaligned
  \end{equation}

We compute $J_N$ by changing the variables into the polar coordinate system:
  $$\begin{cases}
  y_1= r\cos \varphi, \\
  y_2 = r\sin \varphi, \\
 \end{cases} \quad N\leq r\leq 2N,\,0\leq \varphi< 2\pi .$$
In this system, $\varphi$ is the angle between $y$ and ${\rm e}_1$, hence $\sin^2(\angle_{y,{\rm e}_2}) = \cos^2 \varphi$. As a consequence,
  \begin{equation}\label{proof.4}
  \aligned
  J_N  &= \frac{1}{\Lambda_N^2} \int_N^{2N} \d r \int_0^{2\pi} \frac1{r^{2\gamma}} (\cos^4\! \varphi) \, r \, \d \varphi = \frac{3\pi}{4 \Lambda_N^2} \int_N^{2N}\frac{\d r}{r^{2\gamma -1}} .
  \endaligned
  \end{equation}
where in the second step we have used
  $$\aligned
  \int_0^{2\pi} \cos^4\! \varphi\ \d \varphi &= \frac14 \int_0^{2\pi} (1+ \cos 2\varphi)^2 \, \d \varphi = \frac14 \int_0^{2\pi} \bigg(1+ 2\cos 2\varphi + \frac{1+ \cos 4\varphi}2 \bigg) \, \d \varphi= \frac34 \pi .
  \endaligned $$
Following the proof of Lemma \ref{lem-proof} (it is much simpler here since the function $g$ can be taken identically 1), one can show
  \begin{equation}\label{proof.5}
  \bigg|\sum_{k} \big(\theta^N_k \big)^2 - \frac{1}{\Lambda_N^2} \int_{\{N\leq |x|\leq 2N\}} \frac{\d x}{|x|^{2\gamma}} \bigg|\leq \frac{C}{N}
  \end{equation}
for some constant $C>0$. Equivalently,
  $$\bigg| 1 - \frac{2\pi}{\Lambda_N^2} \int_N^{ 2N} \frac{\d r}{r^{2\gamma -1}} \bigg|\leq \frac{C}{N}. $$
Recalling \eqref{proof.4}, we obtain
  $$ \bigg|J_N - \frac 38 \bigg| \leq \frac{C'}N. $$
Combining this limit with Lemmas \ref{lem-differ} and \ref{lem-proof}, we complete the proof.
\end{proof}

Finally we discuss the necessary modifications for proving the estimate \eqref{thm-Ito-corrector.2} in the 3D case.

\begin{remark}\label{rem-corrector-3D}
Recall \cite[Corollary 5.3]{FlaLuo21} for the expression of $S_\theta^{(3),\perp}(v)$ (the part ``orthogonal'' to $S_\theta^{(3)}(v)$); note that there $\nu$ is the noise intensity, playing the role of $\kappa$ in this paper. Similarly to Lemma \ref{lem-append-exress} above, we can rewrite it as
  $$S_\theta^{(3),\perp}(v)= -6\pi^2 \kappa \sum_{l\in \Z^3_0} \sum_{i=1}^2 v_{l,i} |l|^2 \sum_{j=1}^2 \bigg[\sum_k \theta_k^2 \sin^2(\angle_{k,l}) \frac{(a_{l,i}\cdot (k-l)) (a_{l,j}\cdot (k-l))}{|k-l|^2}\bigg] \sigma_{l,j}. $$
Following Proposition \ref{prop-2}, we reformulate \cite[Proposition 5.4]{FlaLuo21} as below: there exists $C>0$ such that for all $l\in \Z^3_0$ and $N\geq 1$, it holds
  \begin{equation}\label{rem-corrector-3D.1}
  \bigg| \sum_k \theta_k^2 \sin^2(\angle_{k,l}) \frac{(a_{l,i}\cdot (k-l)) (a_{l,j}\cdot (k-l))}{|k-l|^2} - \frac4{15} \delta_{i,j} \bigg| \leq C \frac{|l|}N
  \end{equation}
for $i,j \in \{1,2\}$. With this result in hand, repeating the proof of \eqref{alternative-limit} yields
  $$\bigg\| S_{\theta^N}^{(3),\perp} (v)- \frac25 \kappa \Delta v \bigg\|_{H^{s-2-\alpha}} \leq C \frac{\kappa}{N^{\alpha}} \|v \|_{H^s} .$$

To show \eqref{rem-corrector-3D.1}, we simply replace \cite[Lemma 5.6]{FlaLuo21} by the following analogue of Lemma \ref{lem-proof}: there exists $C= C(\gamma)>0$, independent of $N\geq 1$ and $l\in \Z^3_0$, such that for any $i,j\in \{1,2 \}$,
  \begin{equation*}
  \bigg| \sum_{k} \big(\theta^N_k \big)^2 g_l^{i,j}(k) - \frac{1}{\Lambda_N^2} \int_{\{N\leq |x|\leq 2N\}} \frac{1}{|x|^{2\gamma}} g_l^{i,j}(x) \,\d x \bigg| \leq \frac{C}N,
  \end{equation*}
where $g_l^{i,j}(x)= \sin^2(\angle_{x,l}) \frac{(a_{l,i}\cdot x)(a_{l,j}\cdot x)}{|x|^2},\, x\in \R^3\setminus \{0\}$. Note that, by symmetry, the second quantity in the absolute value vanishes if $i\neq j$. The proof of this estimate is similar to that of Lemma \ref{lem-proof}, since $g_l^{i,j}$ enjoys the same properties as those of $g_l$.
\end{remark}

\section*{Declarations}

\noindent\textbf{Ethical Approval.} Not applicable. \smallskip

\noindent\textbf{Competing interests.} The author declares no competing interests. \smallskip

\noindent\textbf{Authors' contributions.} Not applicable. \smallskip

\noindent\textbf{Funding.} The author would like to thank the financial supports of the National Key R\&D Program of China (No. 2020YFA0712700), the National Natural Science Foundation of China (Nos. 11931004, 12090010, 12090014), and the Youth Innovation Promotion Association, CAS (Y2021002). \smallskip

\noindent\textbf{Availability of data and materials.} No new data and materials have been generated in the preparation of this paper.

\end{document}